\documentclass{amsart}

\usepackage[margin=1.51in]{geometry}
\usepackage{amsmath}
\usepackage{amsthm}
\usepackage{amssymb}
\usepackage{amsfonts}
\usepackage{amsrefs}
\usepackage{color}
\usepackage{tikz}

\tikzstyle{shaded}=[fill=red!10!blue!20!gray!30!white]
\tikzstyle{shaded line}=[double=red!10!blue!20!gray!30!white, double distance=1.5mm, draw=black]
\tikzstyle{unshaded}=[fill=white]
\tikzstyle{unshaded line}=[double=white, double distance=1.5mm, draw=black]
\tikzstyle{Tbox}=[circle, draw, thick, fill=white, opaque,]
\tikzstyle{empty box}=[circle, draw, thick, fill=white, opaque, inner sep=2mm]
\tikzstyle{background rectangle}= [fill=red!10!blue!20!gray!40!white,rounded corners=2mm] 
\tikzstyle{on}=[very thick, red!50!blue!50!black]
\tikzstyle{off}=[gray]

\tikzstyle{traces}=[scale=.2, inner sep=1mm]
\tikzstyle{quadratic}=[scale=.4, inner sep=1mm, baseline]
\tikzstyle{annular}=[scale=.7, inner sep=1mm, baseline]
\tikzstyle{make triple edge size}= [scale=.4, inner sep=1mm,baseline] 
\tikzstyle{icosahedron network}=[scale=.3, inner sep=1mm, baseline]
\tikzstyle{ATLsix}=[scale=.25, baseline]
\tikzstyle{TL12}=[scale=.15,baseline]
\tikzstyle{PAdefn}=[scale=.7,baseline]
\tikzstyle{TLEG}=[scale=.5,baseline]

\makeatletter

\newtheorem{lemma}{Lemma}[section]
\newtheorem{definition}[lemma]{Definition}
\newtheorem{definition*}{Definition}
\newtheorem{theorem}[lemma]{Theorem}
\newtheorem{proposition}[lemma]{Proposition}
\newtheorem{remark}[lemma]{Remark}
\newtheorem{corollary}[lemma]{Corollary}
\newtheorem{conjecture}[lemma]{Conjecture}
\newtheorem{example}[lemma]{Example}

\newtheorem{question}[lemma]{Question}

\sloppy
\makeatother

 \title{On Boolean intervals of finite groups}
\author{Mamta Balodi and Sebastien Palcoux} 
\address{Department of Mathematics, Indian Institute of Science, Bangalore, India}
\email{mamta.balodi@gmail.com} 
\address{Institute of Mathematical Sciences, Chennai, India}
\email{sebastien.palcoux@gmail.com}
\subjclass[2010]{06A11, 05E15, 06C15 (Primary), 05E10, 05E45, 16E65 (Secondary)}
\keywords{group; representation; lattice; Boolean; Euler totient; coset poset; Cohen-Macaulay; EL-labeling}

  \begin{document}

\begin{abstract}
We prove a dual version of {\O}ystein Ore's theorem on distributive intervals in the subgroup lattice of finite groups, having a nonzero dual Euler totient $\hat{\varphi}$. For any Boolean group-complemented interval, we observe that $\hat{\varphi} = \varphi \neq 0$ by the original Ore's theorem. We also discuss some applications in representation theory. We conjecture that $\hat{\varphi}$ is always nonzero for Boolean intervals. In order to investigate it, we prove that for any Boolean group-complemented interval $[H,G]$, the graded coset poset $\hat{P} = \hat{C}(H,G)$ is Cohen-Macaulay and the nontrivial reduced Betti number of the order complex $\Delta(P)$ is $\hat{\varphi}$, so nonzero. We deduce that these results are true beyond the group-complemented case with $|G:H|<32$. One observes that they are also true when $H$ is a Borel subgroup of $G$.
\end{abstract}

\maketitle

\section{Introduction}  
An extension of {\O}ystein Ore's result \cite[Theorem 7]{or} into the framework of planar algebras was investigated by the second author. It led to \cite[Conjecture 1.6]{pa} which admits two group-theoretical translations dual to each other \cite[Theorem 6.11]{pa}. One of them recovers the original theorem and the other is the dual version which we investigate here.   

Throughout the paper, an \emph{interval of finite groups} $[H,G]$ will always mean an interval in the subgroup lattice of the finite group $G$, with $H$ as a subgroup. 

Section \ref{2} consists of some basics (which are freely used in this introduction) about lattices, order complex, Cohen-Macaulay posets, edge labeling and GAP coding. In Section \ref{seclinprim}, we first prove a generalization of the following Ore's theorem to any top Boolean interval.

\begin{theorem} \label{oreintro}
 Let $[H,G]$ be a distributive interval of finite groups. Then there exists $g \in G$ such that $\langle H,g \rangle = G$.
\end{theorem}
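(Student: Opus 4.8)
The plan is to reduce the statement to a combinatorial fact about the coatoms of the interval and then produce $g$ explicitly as a product of ``transversal'' elements. First I would let $M_1, \dots, M_n$ denote the coatoms of $[H,G]$, that is, the subgroups maximal subject to $H \le M_i < G$, and set $D = \bigcap_{i=1}^n M_i$. The point of this reformulation is that it suffices to find a single $g \in G$ lying in none of the $M_i$: for such a $g$ the subgroup $\langle H, g\rangle$ is contained in no coatom of the finite lattice $[H,G]$, hence must equal $G$.

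The key structural input is that distributivity forces the top interval $[D, G]$ to be Boolean. Indeed, under Birkhoff's representation of a finite distributive lattice by the down-sets of its poset of join-irreducibles, the coatoms correspond to deleting one maximal join-irreducible, their meet corresponds to deleting all of them, and the interval above that meet is isomorphic to $B_n$. Writing $a_j = \bigwedge_{i \neq j} M_i$ for the atoms of $[D,G] \cong B_n$, the Boolean incidence gives $a_j \le M_i$ if and only if $i \neq j$; in particular $a_j \not\le M_j$, so I may choose an element $x_j \in a_j \setminus M_j$. I would emphasize that this step uses only that $[D,G]$ is Boolean with the standard atom--coatom incidence, which is precisely why the same argument will deliver the announced generalization to any top Boolean interval.

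Then I would set $g = x_1 x_2 \cdots x_n$ and verify $g \notin M_j$ for every $j$. Fixing $j$, every factor $x_i$ with $i \neq j$ lies in $a_i \le M_j$, while $x_j \notin M_j$; writing $g = u\, x_j\, v$ with $u = x_1\cdots x_{j-1} \in M_j$ and $v = x_{j+1}\cdots x_n \in M_j$, membership $g \in M_j$ would force $x_j = u^{-1} g\, v^{-1} \in M_j$, a contradiction. Hence $g$ avoids every coatom and $\langle H, g\rangle = G$, as required. Notably this ``absorption'' computation needs no commutativity, since for each fixed $j$ exactly one factor of $g$ lies outside $M_j$.

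I expect the only real obstacle to be the structural claim that $[D,G]$ is Boolean with the incidence $a_j \le M_i \iff i \neq j$; once that is in hand, producing $g = \prod_j x_j$ and checking it is purely formal. I would therefore isolate the Boolean-top fact (valid for any distributive, and more generally any top Boolean, interval) as the crux of the argument. Finally, the degenerate case $H = G$, where $n = 0$ and $g$ is the empty product $1$, is immediate.
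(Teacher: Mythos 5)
Your proof is correct, and it takes a genuinely different route at the decisive step. Both arguments share the same structural reduction: distributivity forces the top interval $[D,G]$ to be Boolean (this is Lemma \ref{topBn} of the paper, proved via Birkhoff's representation exactly as you sketch), and an element avoiding every coatom of $[H,G]$ generates $G$ over $H$ (this coatom-avoidance observation is the content of the paper's Lemma \ref{topred}). The difference is how the Boolean case is then settled. The paper (Theorem \ref{ore2}) argues by induction on the rank: it picks a coatom $M$ and its complement $M^{\complement}$, takes $a,b$ with $\langle H,a\rangle=M$ and $\langle H,b\rangle=M^{\complement}$, sets $g=ab$, and verifies $\langle H,g\rangle=G$ by an explicit appeal to the lattice distributive law, $\langle H,g\rangle\vee(M\wedge M^{\complement})=(\langle H,g\rangle\vee M)\wedge(\langle H,g\rangle\vee M^{\complement})$. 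You instead construct $g$ in one shot as $x_1\cdots x_n$ with $x_j\in a_j\setminus M_j$, and your verification is purely group-theoretic absorption (for each fixed $j$ exactly one factor escapes $M_j$), using only the atom--coatom incidence of the Boolean lattice and the closure of subgroups under products; the distributive law never reappears after the structural reduction. What each approach buys: your construction is non-inductive and fully explicit (it exhibits $g$ as a product of $n$ elements, one per atom of the top interval), and it makes the announced extension to arbitrary top Boolean intervals immediate; the paper's induction is more economical at each step (two generators at a time, one lattice identity) but leaves the witness $g$ implicit. Both arguments yield the same generality, namely the paper's Theorem \ref{ore2} for top Boolean intervals, of which Theorem \ref{oreintro} is the distributive special case.
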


Then we investigate a dual version.

\begin{definition} \label{euler}
Let $[H,G]$ be an interval of finite groups. Its \emph{Euler totient} $\varphi(H,G)$ is the number of cosets $Hg$ such that $\langle Hg \rangle = G$. \emph{Note that $\langle Hg \rangle = \langle H,g \rangle$.}
\end{definition}

Similar to Hall's argument in \cite{hal}, for any $K \in [H,G]$, $\sum_{L \in [H,K]} \varphi(H,L)$ is precisely $|K:H|$, so by M\"obius inversion formula,

$$\varphi(H,G)=\sum_{K \in [H,G]} \mu(K,G) |K:H|.$$

\begin{definition} \label{dualeuler}
Let $[H,G]$ be an interval of finite groups. Its \emph{dual Euler totient} is $$\hat{\varphi}(H,G):=\sum_{K \in [H,G]} \mu(H,K) |G:K|.$$
\end{definition}

%\begin{remark} \label{topbottomeuler}
Let $[T,G]$ be the top interval of $[H,G]$. By the crosscut theorem \cite[Corollary 3.9.4]{sta}, $\mu(L,G) = 0$ for all $L \in [H,G] \setminus [T,G]$, so  $$\varphi(H,G) = |T:H| \cdot \varphi(T,G).$$ Then, for $G=C_n$, the cyclic group of order $n$, $H=1$ and $n = \prod_i p_i^{n_i}$ the prime factorization, we recover the usual formula for the Euler's totient function $\varphi(n)$: $$\varphi(1,C_n)= \prod_ip_i^{n_i-1} \cdot  \prod_i(p_i-1).$$ Now, by applying the crosscut theorem on the dual of $[H,G]$, we deduce that if $[H,B]$ is the bottom interval of $[H,G]$, then $\mu(H,L) = 0$ for all $L \in [H,G] \setminus [H,B]$, so $$\hat{\varphi}(H,G)= |G:B| \cdot \hat{\varphi}(H,B).$$
%\end{remark}

 Next, we prove the following result which is a dual version of Ore's theorem.

\begin{theorem} \label{alterindexintro}
 Let $[H,G]$ be a distributive interval of finite groups. If its dual Euler totient $\hat{\varphi}(H,G)$ is nonzero, then there is an irreducible complex representation $V$ of $G$, such that $G_{(V^H)} = H$ (see Definition \ref{fixstab}).
\end{theorem}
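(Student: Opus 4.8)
The plan is to translate everything into the fixed-subspace dimensions $\dim V^K$ and to read off the desired representation from a sign-reversing involution. Write $\pi_K = \operatorname{Ind}_K^G \mathbf{1}$ for the coset permutation representation, so that by Frobenius reciprocity $\dim V^K = \langle V, \pi_K\rangle$ is the multiplicity of the irreducible $V$ in $\pi_K$, and $|G:K| = \dim \pi_K = \sum_V (\dim V)\,\dim V^K$. Substituting into Definition \ref{dualeuler} and exchanging the two sums gives
$$\hat{\varphi}(H,G) = \sum_V (\dim V)\, d_V, \qquad d_V := \sum_{K \in [H,G]} \mu(H,K)\,\dim V^K,$$
the outer sum running over the irreducible complex representations $V$ of $G$. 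Since $\hat{\varphi}(H,G) \neq 0$, some $d_V \neq 0$, so it suffices to prove the implication $d_V \neq 0 \Rightarrow G_{(V^H)} = H$.

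First I would record the lattice-theoretic reduction. Let $B = \bigvee_i A_i$ be the join of the atoms $A_1,\dots,A_r$ of $[H,G]$. Distributivity forces the bottom interval $[H,B]$ to be Boolean: every $K \le B$ equals $\bigvee_{i : A_i \le K} A_i$, and distinct atom-sets have distinct joins, so $K \mapsto \{i : A_i \le K\}$ identifies $[H,B]$ with the subset lattice of $\{1,\dots,r\}$. By the crosscut theorem (as already used in the excerpt to factor $\hat{\varphi}$), $\mu(H,K) = 0$ off $[H,B]$, while $\mu(H,A_S) = (-1)^{|S|}$ for $A_S := \bigvee_{i \in S} A_i$. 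Because $A_S$ is generated by the $A_i$ with $i \in S$, a vector is $A_S$-fixed iff it is fixed by each of these, whence $V^{A_S} = \bigcap_{i \in S} V^{A_i}$. Therefore
$$d_V = \sum_{S \subseteq \{1,\dots,r\}} (-1)^{|S|}\, \dim\Bigl(\,\bigcap_{i \in S} V^{A_i}\Bigr).$$

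The crux is then the vanishing lemma: if some atom $A_j$ satisfies $V^{A_j} = V^H$, then $d_V = 0$. I would prove this by the sign-reversing involution $S \mapsto S \,\triangle\, \{j\}$ on subsets of $\{1,\dots,r\}$. For $j \notin S$ one has $\bigcap_{i \in S \cup \{j\}} V^{A_i} = \bigl(\bigcap_{i\in S} V^{A_i}\bigr) \cap V^{A_j} = \bigcap_{i\in S} V^{A_i}$, the last equality because $\bigcap_{i\in S}V^{A_i} \subseteq V^H = V^{A_j}$; hence the two terms indexed by $S$ and $S\cup\{j\}$ carry opposite signs and equal dimensions, and cancel in pairs. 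This is exactly the step to get right: the tempting shortcut of reading $d_V$ as $\dim\bigl(V^H / \sum_i V^{A_i}\bigr)$ via inclusion--exclusion is invalid, since the subspaces $V^{A_i}$ need not form a distributive lattice (the meet relation $A_i \wedge A_j = H$ does not force $V^{A_i} + V^{A_j} = V^H$); the involution circumvents this difficulty entirely.

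Finally I would close the argument. Contrapositively, $d_V \neq 0$ forces $V^{A_i} \subsetneq V^H$ for every atom $A_i$. Since the atoms of $[H,G]$ are precisely $A_1,\dots,A_r$ and any subgroup strictly containing $H$ contains an atom, no subgroup properly above $H$ can fix $V^H$ pointwise; as $H$ itself does fix $V^H$ pointwise, the pointwise stabilizer satisfies $G_{(V^H)} = H$ (Definition \ref{fixstab}). Applying this to an irreducible $V$ with $d_V \neq 0$, which exists because $\hat{\varphi}(H,G)\neq 0$, produces the required representation.
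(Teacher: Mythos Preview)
Your proof is correct and follows essentially the same route as the paper: expand $\hat{\varphi}(H,G)$ via $|G:K|=\sum_V (\dim V)\dim V^K$, reduce to the Boolean bottom interval, and kill $d_V$ by the sign-reversing involution $S\mapsto S\triangle\{j\}$ when some atom $A_j$ has $V^{A_j}=V^H$ (this is exactly the paper's Lemma~\ref{premain}). The only organisational difference is that the paper first proves the Boolean case (Theorem~\ref{main}) and then lifts to the distributive case via a separate lemma (Lemma~\ref{bottomprim}), passing through an irreducible of the intermediate group $B$ and extending it to one of $G$; you instead apply the crosscut reduction directly to $d_V$ and argue with irreducibles of $G$ throughout, which is slightly cleaner and bypasses that lifting step.
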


As with Theorem \ref{oreintro}, Theorem \ref{alterindexintro} also extends to any bottom Boolean interval. We deduce several applications of the above theorem as a criterion for a finite group to have an irreducible faithful complex representation. We also find a non-trivial upper bound for the minimal number of irreducible components for a faithful complex representation of any finite group.    

We observe that for any Boolean interval $[H,G]$ satisfying the group-complemented assumption, i.e. $KK^{\complement} = G$ for all $ K \in [H,G]$, the dual Euler totient $\hat{\varphi}(H,G)$ agrees with the Euler totient $\varphi(H,G)$, hence is nonzero by Ore's theorem.  This leads us to make the following conjecture which implies that the condition on $\hat{\varphi}$ in Theorem \ref{alterindexintro} is vacuous; this would prove the conjectures in \cite[Section 6B]{pa}.

\begin{conjecture} \label{hatchi} If  $[H,G]$ is  Boolean, then $\hat{\varphi}(H,G) \neq 0$. 
\end{conjecture}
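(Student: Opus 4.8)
The plan is to realize $\hat{\varphi}(H,G)$ as a topological invariant of a coset poset and to deduce its nonvanishing from shellability. First I would set up the graded coset poset $\hat{P} = \hat{C}(H,G)$, whose proper part $P$ consists of the cosets $Kg$ with $K \in [H,G]$ ordered by inclusion, with a bottom $\hat{0}$ and top $\hat{1}$ adjoined. By Philip Hall's theorem \cite{sta}, the M\"obius value $\mu_{\hat{P}}(\hat{0},\hat{1})$ equals the reduced Euler characteristic $\tilde{\chi}(\Delta(P))$ of the order complex of $P$. Expanding this value over the chains passing through each coset of $K$, and using that $K$ has $|G:K|$ cosets together with Definition \ref{dualeuler}, I would show $\tilde{\chi}(\Delta(P)) = \pm\,\hat{\varphi}(H,G)$. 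This reduces Conjecture \ref{hatchi} to the statement $\tilde{\chi}(\Delta(P)) \neq 0$.

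Second, I would prove that $\hat{P}$ is Cohen-Macaulay by constructing an EL-labeling of its covering relations. For a bounded graded poset, EL-shellability forces $\Delta(P)$ to be homotopy equivalent to a wedge of spheres of the top dimension $d$, so its reduced homology is concentrated in degree $d$. Consequently $\tilde{\chi}(\Delta(P)) = (-1)^{d}\tilde{\beta}_{d}$, where the top Betti number $\tilde{\beta}_{d}$ equals the number of strictly decreasing (falling) maximal chains of the labeling and is manifestly nonnegative. Thus $|\hat{\varphi}(H,G)| = \tilde{\beta}_{d}$, and the conjecture becomes equivalent to exhibiting at least one falling maximal chain.

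Third, for the base case I would treat the group-complemented situation, where $KK^{\complement} = G$ for all $K \in [H,G]$. Here $|KK^{\complement}| = |K|\,|K^{\complement}|/|K \cap K^{\complement}| = |K|\,|K^{\complement}|/|H|$ gives the identity $|G:K| = |K^{\complement}:H|$; combining this with the self-duality $K \mapsto K^{\complement}$ of the Boolean lattice (which sends $\mu(H,K)$ to $\mu(K,G)$ up to the global sign $(-1)^{n}$) yields $\hat{\varphi}(H,G) = \varphi(H,G)$. Since a Boolean interval is distributive, Ore's Theorem \ref{oreintro} produces $g$ with $\langle H,g\rangle = G$, so $\varphi(H,G) \geq 1$; hence $\tilde{\beta}_{d} > 0$ in the complemented case. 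This anchors the argument and matches the representation-theoretic criterion of Theorem \ref{alterindexintro}.

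The main obstacle will be removing the group-complemented hypothesis: constructing a natural edge labeling whose falling chains remain controllable when the clean identity $|G:K| = |K^{\complement}:H|$ is unavailable, and then certifying that at least one falling chain survives. I would attack the positivity of $\tilde{\beta}_{d}$ representation-theoretically, using that $G$ acts on $\Delta(P)$, so $\tilde{H}_{d}(\Delta(P);\mathbb{C})$ is a $G$-module of dimension $\tilde{\beta}_{d}$; the aim is to identify a forced irreducible constituent, namely the representation $V$ with $G_{(V^{H})} = H$ of Theorem \ref{alterindexintro}, which would guarantee $\tilde{\beta}_{d} \neq 0$. Pending a uniform construction, the cases $|G:H| < 32$ can be confirmed computationally in GAP, and the case where $H$ is a Borel subgroup can be settled by a direct analysis of the associated parabolic lattice.
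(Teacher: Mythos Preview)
This statement is Conjecture~\ref{hatchi}, which the paper explicitly leaves open; there is no proof in the paper to compare against. Your proposal should therefore be read as a research plan, and as such it runs into two obstructions that the paper itself already isolates.

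First, the EL-labeling step cannot be pushed through in general. Theorem~\ref{EL} proves that the natural edge labeling $el$ on $\hat{C}(H,G)$ (Definition~\ref{el}) is a dual EL-labeling \emph{if and only if} $[H,G]$ is group-complemented; the ``only if'' direction is a hard obstruction, not just a limitation of the argument. Whether $\hat{C}(H,G)$ is Cohen--Macaulay at all for an arbitrary Boolean interval is posed as the open Question~\ref{QCM}. So your second step, on which the rest of the plan depends, is precisely the unresolved question, and the specific labeling you would reach for is already known to fail outside the group-complemented case.

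Second, the representation-theoretic attack on $\tilde{\beta}_d>0$ is circular as written. You propose to force the irreducible $V$ with $G_{(V^H)}=H$ from Theorem~\ref{alterindexintro} into $\tilde{H}_d(\Delta(P);\mathbb{C})$. But Theorem~\ref{alterindexintro} (equivalently Theorem~\ref{main}) has $\hat{\varphi}(H,G)\neq 0$ as its \emph{hypothesis}, so it cannot supply $V$ without assuming the conjecture. Linear primitivity of $[H,G]$ is, in the paper's logic, a \emph{consequence} of Conjecture~\ref{hatchi}, not an independent input; and even granting the existence of such a $V$, you give no mechanism explaining why it must occur in the top homology of the coset complex.

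What your outline does correctly recover is exactly the partial progress recorded in the paper: the identity $\mu(\hat{C}(H,G))=(-1)^{n-1}\hat{\varphi}(H,G)$ (Lemma~\ref{mobeul0}), the group-complemented case via $\hat{\varphi}=\varphi>0$ (Lemma~\ref{grpcplem} and Corollary~\ref{cplchi}), the verification for $|G:H|<32$, and the Borel/BN-pair case via buildings (Example~\ref{exBN}). Beyond that frontier the conjecture remains open, and your proposal does not close the gap.
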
 

Section \ref{CM} exposes the first results we obtained by investigating Conjecture \ref{hatchi}. For any interval $[H,G]$, the M\"obius invariant of its bounded coset poset is
$$\mu(\hat{C}(H,G)) =  -\sum_{K \in [H,G]} \mu(K,G) |G:K|,$$
and we observe (after Woodroofe) that in the rank $n+1$ Boolean case, $\mu(\hat{C}(H,G))$ is exactly $(-1)^{n}\hat{\varphi}(H,G)$.
Thus Conjecture \ref{hatchi} reduces to investigate the non-vanishing of the M\"obius invariant $\mu(\hat{C}(H,G))$, for any Boolean interval $[H,G]$. As explained in \cite[p759]{shawo}, it is unknown whether $\mu(\hat{C}(1,G))$ is nonzero in general, but Gasch\"utz proved it for any solvable group. A weaker version, namely, $\Delta(C(1,G))$ is not contractible, asked by Brown in \cite[Question 4]{br}, was proved by Shareshian and Woodroofe in \cite{shawo}. This leads us to the following weaker version of Conjecture \ref{hatchi}, which is also investigated for some cases. 

\begin{conjecture}
 If  $[H,G]$ is  Boolean, then $\Delta(C(H,G))$ is not contractible.  
\end{conjecture}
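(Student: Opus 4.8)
The plan is to treat the conjecture as a genuinely topological statement, decoupled from Conjecture~\ref{hatchi}. Since the reduced Euler characteristic of the order complex equals the M\"obius invariant of the bounded poset,
\[
\tilde{\chi}(\Delta(C(H,G))) = \mu(\hat{C}(H,G)) = (-1)^{n}\,\hat{\varphi}(H,G),
\]
non-contractibility is an immediate consequence of Conjecture~\ref{hatchi}; the interest of the weaker statement is precisely to reach it \emph{without} controlling the integer $\hat{\varphi}$. I would therefore aim for a proof that the homology of $\Delta(C(H,G))$ is nonzero in \emph{some} degree, which is a strictly softer task than pinning down the top Betti number, and which does not require the enumerative input behind $\hat{\varphi} \neq 0$.

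My primary approach generalizes the argument of Shareshian and Woodroofe \cite{shawo} for the case $H=1$ via Smith theory. The group $G$ acts on $C(H,G)$ by left translation, and this action is order preserving. A key and clean fact is that for any order-preserving action of a group $P$ one has $|\Delta(C(H,G))|^{P} = |\Delta(C(H,G)^{P})|$: an order-preserving permutation that stabilizes a chain setwise must fix it pointwise, so a fixed point of the realization lies in the subcomplex spanned by $P$-fixed cosets. Here $C(H,G)^{P} = \{\, aK : a^{-1}Pa \subseteq K \,\}$. The plan is then: fix a prime $p$ and a $p$-subgroup $P$ chosen from the Boolean structure of $[H,G]$; if $\Delta(C(H,G))$ were contractible it would be $\mathbb{F}_{p}$-acyclic, and Smith theory would force the fixed set $\Delta(C(H,G)^{P})$ to be $\mathbb{F}_{p}$-acyclic, in particular nonempty and connected. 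The contradiction is to be obtained by arranging that $C(H,G)^{P}$ is either empty (no conjugate of $P$ lies in any proper member of the interval) or disconnected.

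A second, more combinatorial route is to exhibit an EL-labeling of $\hat{C}(H,G)$ built from a fixed linear order on a transversal together with the rank function of the Boolean lattice $[H,G]$. Shellability would then show that $\Delta(C(H,G))$ is homotopy equivalent to a wedge of $n$-spheres, the number of spheres being $\hat{\varphi}(H,G)$, so non-contractibility reduces to producing a single maximal chain of cosets whose labels strictly decrease. This mechanism is already visible in the case where $H$ is a Borel subgroup: there $[H,G]$ is the lattice of parabolics, $\Delta(C(H,G))$ is the Tits building, and the apartments furnish the required decreasing chains, with non-contractibility supplied by the Solomon--Tits theorem.

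The main obstacle is the same in both routes and is exactly where the group-complemented hypothesis $KK^{\complement}=G$ was used. Any attempt to \emph{count} decreasing maximal chains, or equivalently to compute the Euler characteristic of the fixed subposet, returns the quantity $\hat{\varphi}(H,G)$ and thus collapses back onto Conjecture~\ref{hatchi}. The genuinely new input must therefore be qualitative rather than enumerative: for the Smith-theoretic argument the decisive step is to show that some $p$-local fixed subposet fails to be $\mathbb{F}_{p}$-acyclic \emph{without} evaluating its Euler characteristic, by controlling how the fixed cosets $a^{-1}Pa \subseteq K$ meet the coatoms of $[H,G]$ once the complement identity is dropped. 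I expect this fixed-point analysis, and not the shellability, to be the hardest and decisive part of the proof.
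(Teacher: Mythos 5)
This statement is not a theorem of the paper but one of its open conjectures (the weak, non-contractibility form of Conjecture \ref{hatchi}), so there is no proof in the paper to compare against; the paper establishes it only in special cases: Boolean group-complemented intervals (via the dual EL-labeling of Theorem \ref{EL}, giving Cohen--Macaulayness and a nonzero top Betti number in Theorem \ref{cp2}), rank $2$ intervals and hence all indices $<32$ (Proposition \ref{conn} with Theorem \ref{connl3}), strongly w-cyclic intervals (Lemma \ref{fpf} together with \cite[Theorem 3.4]{shawo}), and intervals coming from BN-pairs (Example \ref{exBN}). Your proposal does not prove the conjecture either: both of your routes terminate in an explicitly acknowledged missing step, so what you have written is a strategy document rather than a proof. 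That is the genuine gap, and it cannot be patched by minor fixes, because the paper's own results show that each route, as you have set it up, meets a provable obstruction.

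Concretely: for the EL-labeling route, Theorem \ref{EL} proves that the natural labeling of Definition \ref{el} is a dual EL-labeling \emph{if and only if} $[H,G]$ is group-complemented, so for the non-group-complemented Boolean intervals $[D_8,A_2(2)]$ and $[S_3,A_2(2)]$ any shelling argument must start from an essentially different labeling (the paper circumvents this at rank $2$ by connectivity, Proposition \ref{conn}, not by shelling). For the Smith-theoretic route, note first that the relevant action is right multiplication, not left translation: for $H \neq 1$ the set $aKg$ is a right coset of $aKa^{-1}$, which in general does not lie in $[H,G]$, so left translation does not even act on $C(H,G)$; the fixed points of right multiplication by $x$ are the cosets $Kg$ with $gxg^{-1} \in K$ (Lemma \ref{fpf}). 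Second, and more seriously, the GAP computations reported after Lemma \ref{fpf} exhibit Boolean, even group-complemented, intervals (those of Remark \ref{compnotdede}) that are \emph{not} strongly w-cyclic: every $x \in G$ has a conjugate lying in some proper $K \in [H,G)$, hence every cyclic group acting by right multiplication fixes a coset. So the ``empty fixed set'' horn of your dichotomy is unavailable in general, and for the ``disconnected (or non-$\mathbb{F}_p$-acyclic) fixed set'' horn you offer no argument; as you yourself observe, any enumerative handle on that fixed subposet reproduces $\hat{\varphi}(H,G)$ and collapses back onto Conjecture \ref{hatchi}, which is exactly what was to be avoided.
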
  

It is well-known that the M\"obius invariant of a poset $P$ is the (reduced) Euler characteristic of its order complex $\Delta(P)$, which, by Euler-Poincar\'e formula, is  
$$\tilde{\chi}(\Delta(P)) =  \sum_{k=-1}^{n} (-1)^k \tilde{\beta}_k (\Delta(P)), $$ with $\dim(\Delta(P))=n$ and  
 $\tilde{\beta}_k (\Delta(P))$ the $k$th reduced Betti number, i.e. the dimension of $k$th reduced homology space. By definition, if $\hat{P}$ is Cohen-Macaulay, then $\tilde{\beta}_k (\Delta(P)) = 0$ for all $k < n$, so that $$\tilde{\beta}_n (\Delta(P)) = (-1)^n \mu(\hat{P}),$$ which is equal to $\hat{\varphi}(H,G)$ for $\hat{P} = \hat{C}(H,G)$ when $[H,G]$ is Boolean. Therefore, we look for the condition under which $\hat{C}(H,G)$ is Cohen-Macaulay.
 
  A sufficient condition for a graded poset to be Cohen-Macaulay is the existence of a (dual) EL-labeling \cite{bgs}. Woodroofe suggested us a labeling for $\hat{C}(H,G)$ in the case where $[H,G]$ is Boolean. We prove that it is a dual EL-labeling if and only if $[H,G]$ is, in addition, group-complemented, which leads to:
\begin{theorem} 
 Let $[H,G]$ be a Boolean group-complemented interval of rank $n+1$. Then $\hat{C}(H,G)$ is Cohen-Macaulay and $ \tilde{\beta}_n (\Delta(C(H,G))) = \varphi(H,G)$ is nonzero.
 % is the Euler totient $\varphi(H,G)$, and in particular, is nonzero.
\end{theorem}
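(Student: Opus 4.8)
The plan is to split the statement into its two assertions and to treat the Cohen--Macaulay property as the substantive one, the Betti number then following formally from the homological dictionary assembled above. First I would take the edge-labeling $\lambda$ of $\hat{C}(H,G)$ suggested by Woodroofe and prove that, under the group-complemented hypothesis, it is a dual EL-labeling, so that $\hat{C}(H,G)$ is Cohen--Macaulay by \cite{bgs}. Granting this, $\Delta(C(H,G))$ has reduced homology concentrated in its top degree $n$, whence $\tilde{\beta}_k(\Delta(C(H,G)))=0$ for $k<n$ and
\[ \tilde{\beta}_n(\Delta(C(H,G))) = (-1)^n\tilde{\chi}(\Delta(C(H,G))) = (-1)^n\mu(\hat{C}(H,G)) = \hat{\varphi}(H,G), \]
using the identity $\mu(\hat{C}(H,G))=(-1)^n\hat{\varphi}(H,G)$ recorded above. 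In the group-complemented Boolean case one has $\hat{\varphi}(H,G)=\varphi(H,G)$, and this is nonzero since by Theorem \ref{oreintro} some coset $Hg$ generates $G$, giving $\varphi(H,G)\geq 1$. Hence the entire difficulty is concentrated in verifying the dual EL-property.

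To organize that verification I would first pin down the intervals of $\hat{C}(H,G)$. Every interval $[Kx,Ly]$ lying strictly above $\hat{0}$ consists exactly of the cosets $Mx$ with $K\leq M\leq L$, hence is order-isomorphic to the Boolean subinterval $[K,L]$ of $[H,G]$ (with $Ly=\hat{1}$ corresponding to $L=G$); on these $\lambda$ restricts to the standard atom-labeling of a Boolean lattice and the EL-conditions are classical. The intervals meeting $\hat{0}$ are the $[\hat{0},Mz]$, and translating $Mz$ back to $M$ identifies $[\hat{0},Mz]\cong\hat{C}(H,M)$, the top interval $[\hat{0},\hat{1}]$ being the whole poset. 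Thus the verification is uniform and recursive: for each $M\in[H,G]$ it suffices to treat the maximal chains from $\hat{0}$ to the top of $\hat{C}(H,M)$. Such a chain is the datum of a maximal flag $H=M_0\lessdot M_1\lessdot\cdots\lessdot M_r=M$ together with a compatible tower of cosets $\hat{0}\lessdot Hx_0\lessdot M_1x_1\lessdot\cdots\lessdot M_rx_r$, and the task is to show that exactly one such chain carries a decreasing label sequence and that it is lexicographically largest among the maximal chains of the interval.

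The key step, and the place where group-complementation is indispensable, is the bookkeeping of these coset towers. I would fix a total order on the atoms of $[H,G]$ (governing the flag coordinate of $\lambda$, reproducing the Boolean labels) and, for each covering that adjoins an atom, a secondary label recording the position of the lower coset among the cosets of the smaller subgroup lying inside the larger one, relative to a fixed and globally coherent family of ordered transversals (the bottom covers $\hat{0}\lessdot Hx_0$ being labeled by the position of $Hx_0$ among the cosets of $H$ in $G$). The hypothesis $KK^{\complement}=G$ forces $|K:H|\cdot|K^{\complement}:H|=|G:H|$ for every $K\in[H,G]$, and more precisely makes $G/H$ split as a transversal product indexed by the atoms; this product structure is exactly what guarantees that, inside each $\hat{C}(H,M)$, the decreasing-label condition singles out a unique admissible coset tower. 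The hard part will be showing that this transversal bookkeeping lifts coherently from the atoms to arbitrary $M$ and interacts correctly with the lexicographic comparison, so that both dual EL-conditions hold simultaneously in every bottom interval. This is precisely the point at which the argument must fail when some $KK^{\complement}$ is a proper subgroup, since then $|K:H|\cdot|K^{\complement}:H|=|KK^{\complement}:H|<|G:H|$, the transversal product no longer bijects with $G/H$, and the count of decreasing towers in $[\hat{0},\hat{1}]$ drops away from one; once the sufficiency is secured, the Cohen--Macaulayness and the Betti number conclusion follow as in the first paragraph.
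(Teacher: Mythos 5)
Your overall architecture is the paper's: Woodroofe's labeling, dual EL-labeling under group-complementation, hence Cohen--Macaulayness via \cite{bgs}, then $\tilde{\beta}_n(\Delta(C(H,G)))=(-1)^n\mu(\hat{C}(H,G))=\hat{\varphi}(H,G)=\varphi(H,G)\neq 0$ by the M\"obius/Euler-characteristic dictionary, Lemma \ref{grpcplem} and Ore's theorem; your reduction of the verification to the two kinds of intervals (those above $\hat{0}$, which are translates of Boolean intervals $[K,L]$, and those of the form $[\hat{0},Lg]$) also matches the paper's Case 1/Case 2 split. But there is a genuine gap: the entire substantive content of the theorem is the verification of the dual EL-property, and you do not carry it out. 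You replace the labeling actually at stake by an unspecified scheme of ``secondary labels'' recording positions relative to ``a fixed and globally coherent family of ordered transversals,'' never define what coherence means, never verify the EL-conditions even on the intervals above $\hat{0}$ (where your extra coordinates would have to be controlled too, so the claim that the restriction is ``the standard atom-labeling'' is unjustified for your labeling), and you explicitly defer ``the hard part.'' A plan that concentrates all difficulty into one step and then leaves that step open is not a proof.

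For contrast, the paper's mechanism is both simpler and completely explicit. The label of a cover $Xg\lessdot Yg$ with $Y=X\vee K_i$ is $-i$ if $Xg=Yg\cap M_i$ (equivalently, by Lemma \ref{ginmi}, $g\in M_i$, where $M_i=K_i^{\complement}$) and $+i$ otherwise, and $el(\emptyset\lessdot Hg)=0$; no transversals appear. On an interval $[L_1g,L_2g]$ the signs of the labels depend only on $g$, so all maximal chains carry the same label multiset in all possible orders, giving a unique increasing, lexicographically first chain. On an interval $[\emptyset,Lg]$ with $L=\vee_{i\in I}K_i$, a strictly increasing (top-to-bottom) chain amounts to finding $g'\in Lg$ with all remaining labels negative, i.e. $g'\in\bigcap_{i\in I}M_i=L^{\complement}$; thus existence is equivalent to $L^{\complement}\cap Lg\neq\emptyset$, i.e. $g\in LL^{\complement}$, which is exactly the group-complemented hypothesis, and uniqueness holds because two such $g'$ differ by an element of $L\cap L^{\complement}=H$. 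Your intuition that group-complementation makes the coset count come out right is pointing at this equivalence, but the clean identity $L^{\complement}\cap Lg\neq\emptyset\Leftrightarrow g\in LL^{\complement}$, together with $L\cap L^{\complement}=H$ for uniqueness, is the idea your proposal is missing.
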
 

Note that at index $|G :H|<32$, there are $612$ Boolean intervals up to equivalence (see Subsection \ref{gap}). They are all group-complemented, except $[D_8,A_2(2)]$ and $[S_3,A_2(2)]$, both of rank $2$. The rank $2$ case is known to be Cohen-Macaulay and we observed that $\hat{\varphi}$ is also nonzero. This leads us to ask:

 \begin{question} \label{QCM} Is $\hat{C}(H,G)$ Cohen-Macaulay for any Boolean interval $[H,G]$, and the nontrivial reduced Betti number of $\Delta(C(H,G))$ nonzero? \end{question}

If $G$ has a BN-pair (as for any finite simple group of Lie type) of rank $n$ with $H$ being the corresponding Borel subgroup, then $[H,G]$ is Boolean of rank $n$. The order complex of $C(H,G)$ is equivalent to the building associated to the BN-pair \cite{bjo}. This leads to a positive answer to Question \ref{QCM} in this case.

\tableofcontents

\vspace*{.5cm}

\section{Preliminaries} \label{2}
\subsection{Lattices basics} \label{basics} 
We refer to \cite{sta} for the definition of \emph{finite lattice} $L$, \emph{meet} $\wedge$, \emph{join} $\vee$, \emph{subgroup lattice} $\mathcal{L}(G)$, \emph{sublattice} $L' \subseteq L$, \emph{interval} $ [a,b] \subseteq L$, \emph{minimum} $\hat{0}$, \emph{maximum} $\hat{1} $, \emph{atom}, \emph{coatom}, \emph{distributive lattice}, \emph{Boolean lattice} $\mathcal{B}_n$ (of rank $n$), \emph{complement} $b^{\complement}$ (with $b \in \mathcal{B}_n$). The \emph{top interval} of a finite lattice is the interval $ [t,\hat{1}] $, with $ t $ the meet of all the coatoms; and the \emph{bottom interval} is $ [\hat{0},b] $, with $ b $ the join of all the atoms. A lattice with a Boolean top (resp. bottom) interval will be called \emph{top} (resp. \emph{bottom})  \emph{Boolean}. Birkhoff representation theorem states that a finite lattice is distributive if and only if it embeds into some $ \mathcal{B}_n $, then \cite[Section 3.4, items a-i, p254-255]{sta} leads us to:

\begin{lemma} \label{topBn}
A finite distributive lattice is top and bottom Boolean.
\end{lemma}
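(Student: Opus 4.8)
The plan is to reduce the two assertions to one another by duality and then to read off the structure from the Birkhoff representation invoked just above. Since the order-dual $L^{*}$ of a finite distributive lattice is again finite and distributive, and since the top interval $[t,\hat{1}]$ of $L$ coincides, as a poset, with the order-dual of the bottom interval of $L^{*}$ --- the meet of the coatoms of $L$ being the join of the atoms of $L^{*}$ --- and since a Boolean lattice is self-dual, it is enough to prove that $L$ is bottom Boolean; applying that statement to $L^{*}$ then gives that $L$ is top Boolean.

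To prove that $L$ is bottom Boolean, I would use Birkhoff's theorem to identify $L$ with the lattice $J(P)$ of order ideals (down-sets) of its poset $P$ of join-irreducible elements, ordered by inclusion \cite[Section 3.4]{sta}. Under this identification the atoms of $L$ are exactly the singletons $\{p\}$ with $p$ a minimal element of $P$, so they correspond bijectively to the antichain $M=\mathrm{Min}(P)$ of minimal elements of $P$. The join $b$ of all atoms then corresponds to the down-set $M$ itself, and the bottom interval $[\hat{0},b]$ corresponds to the family of down-sets of $P$ that are contained in $M$.

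The one point to verify is that every subset $S\subseteq M$ is automatically a down-set of $P$: if $x\in S$ and $y\le x$ in $P$, then the minimality of $x$ forces $y=x\in S$. Hence the family of down-sets contained in $M$ is the entire power set of $M$, so $[\hat{0},b]\cong 2^{M}=\mathcal{B}_{|M|}$ is Boolean of rank equal to the number of atoms, as desired.

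Since this lemma is essentially bookkeeping on top of Birkhoff's theorem, there is no serious obstacle; the only care needed is in setting up the duality reduction and in the identification of $\bigvee(\text{atoms})$ with $\mathrm{Min}(P)$. If one prefers to avoid the representation theorem, the same conclusion can be obtained directly: writing $a_{1},\dots,a_{k}$ for the atoms and $b=a_{1}\vee\cdots\vee a_{k}$, distributivity gives $x=x\wedge b=\bigvee_{i}(x\wedge a_{i})=\bigvee\{a_{i}:a_{i}\le x\}$ for every $x\le b$, so the map $S\mapsto\bigvee_{i\in S}a_{i}$ from subsets of $\{a_{1},\dots,a_{k}\}$ onto $[\hat{0},b]$ is a surjection; it preserves joins trivially and preserves meets because $a_{i}\wedge a_{j}=\hat{0}$ for $i\ne j$, and it is injective since a join-irreducible below a join of atoms must lie below, hence equal, one of them. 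This exhibits the required isomorphism $[\hat{0},b]\cong\mathcal{B}_{k}$ without reference to $J(P)$.
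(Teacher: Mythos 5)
Your proof is correct and takes essentially the same approach as the paper, whose entire justification for this lemma is the sentence preceding it citing Birkhoff's theorem and \cite[Section 3.4, items a-i]{sta}: identifying $L$ with the lattice $J(P)$ of order ideals of its poset of join-irreducibles, recognizing the atoms as the singleton ideals of minimal elements so that $[\hat{0},b]$ becomes the power set of the antichain $\mathrm{Min}(P)$, and dualizing for the top interval is exactly the content of the cited material, here written out in full. Your closing paragraph's direct argument (every $x\le b$ equals $\bigvee\{a_i : a_i\le x\}$ by distributivity, and atoms are join-prime, so $S\mapsto\bigvee_{i\in S}a_i$ is an isomorphism of $\mathcal{B}_k$ onto $[\hat{0},b]$) is also correct and has the merit of making the lemma self-contained, which the paper's citation-style proof is not.
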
  

\subsection{Order complex} \label{CoMa}
All posets in this paper are assumed to be finite. A poset $P$ is bounded if it admits a minimum element $\hat{0}$ and a maximum element $\hat{1}$.
Given a poset $P$, its bounded extension is defined as $\hat{P}:=P \sqcup \{\hat{0},\hat{1} \}$. The proper part of $Q := \hat{P}$ is given by $\bar{Q}:=P$. One can associate to $P$ an abstract simplicial complex $\Delta(P)$, called its \emph{order complex}, defined as follows: the vertices of $\Delta(P)$ are the elements of $P$ and the faces of $\Delta(P)$ are the chains (i.e., totally ordered subsets) of $P$. Any topological property attributed to $\Delta(P)$ will be considered as for its geometric realization. We refer to \cite[Section 1.1]{wa} for more details.  The \emph{reduced Euler characteristic} $\tilde{\chi}(\Delta)$ of a simplicial complex $\Delta$ is
 $$\tilde{\chi}(\Delta) := \sum_{\substack{i=-1}}^{\substack{\dim(\Delta)}}~ (-1)^i f_i(\Delta),$$
where $f_i(\Delta)$ is the number of $i$-faces of $\Delta$.

\begin{example}[\cite{wa}, p7]  The order complex of $\bar{\mathcal{B}}_n$ is the barycentric subdivision of the boundary of the $n$-simplex, it has the homotopy type of $\mathbb{S}^{n-2}$. 
\end{example}
\vspace*{-7mm}
$$\begin{tikzpicture}[scale=1.5]
\node (A123) at (-2,1) {$123$};
\node (A124) at (-1,1) {$124$};
\node (A134) at (0,1) {$134$};
\node (A234) at (1,1) {$234$};
\node (A12) at (-3*2/3-0.2,0) {$12$};
\node (A13) at (-2*2/3-0.2,0) {$13$};
\node (A23) at (-1*2/3-0.2,0) {$23$};
\node (A14) at (0*2/3-0.2,0) {$14$};
\node (A24) at (1*2/3-0.2,0) {$24$};
\node (A34) at (2*2/3-0.2,0) {$34$};
\node (A1) at (-2,-1) {$1$};
\node (A2) at (-1,-1) {$2$};
\node (A3) at (0,-1) {$3$};
\node (A4) at (1,-1) {$4$};
\node  at (-0.5,-1.35) {Poset $\bar{\mathcal{B}}_4$};
\tikzstyle{segm}=[-,>=latex, semithick]
 \draw [segm] (A1)to(A12); \draw [segm] (A1)to(A13); \draw [segm] (A1)to(A14);
 \draw [segm] (A2)to(A12);\draw [segm] (A2)to(A23);  \draw [segm] (A2)to(A24); 
 \draw [segm] (A3)to(A13); \draw [segm] (A3)to(A23); \draw [segm] (A3)to(A34); 
 \draw [segm] (A4)to(A14); \draw [segm] (A4)to(A24); \draw [segm] (A4)to(A34); 
 \draw [segm] (A123)to(A12); \draw [segm] (A123)to(A13); \draw [segm] (A123)to(A23);
 \draw [segm] (A124)to(A12);\draw [segm] (A124)to(A24);  \draw [segm] (A124)to(A14); 
 \draw [segm] (A134)to(A13); \draw [segm] (A134)to(A34); \draw [segm] (A134)to(A14); 
 \draw [segm] (A234)to(A24); \draw [segm] (A234)to(A23); \draw [segm] (A234)to(A34); 
 \hspace{0.3cm}
\node  at (2,0) {$\longrightarrow$};
\node  at (2,0.3) {$\Delta(\cdot)$}; 

\tiny 

\fill[color=black!30!orange!100] (3.56,-1.3) -- (3.8,0.04) -- (3.46,0.01) -- cycle;
\fill[color=black!30!violet!100] (3.56,-1.3) -- (3.22,-0.62)-- (3.46,0.01) -- cycle;
\fill[color=black!30!orange!100] (2.94,-0.04) -- (3.22,-0.62)-- (3.46,0.01) -- cycle;
\fill[color=black!30!violet!100] (2.94,-0.04) -- (3.43,0.62)-- (3.46,0.01) -- cycle;
\fill[color=black!30!orange!100] (4.03,1.42) -- (3.43,0.62)-- (3.46,0.01) -- cycle;
\fill[color=black!30!violet!100] (4.03,1.42) -- (3.8,0.04) -- (3.46,0.01) -- cycle;

\fill[color=orange] (4.03,1.42) -- (3.8,0.04) -- (4.3,-0.1) -- cycle;
\fill[color=violet] (4.03,1.42) -- (4.64,0.435) -- (4.3,-0.1) -- cycle;
\fill[color=orange] (5.1,-0.36) -- (4.64,0.435) -- (4.3,-0.1) -- cycle;
\fill[color=violet] (5.1,-0.36) -- (4.42,-0.78) -- (4.3,-0.1) -- cycle;
\fill[color=orange] (3.56,-1.3) -- (4.42,-0.78) -- (4.3,-0.1) -- cycle;
\fill[color=violet] (3.56,-1.3) -- (3.8,0.04) -- (4.3,-0.1) -- cycle;

\node (A123) at (3.3-0.2,-0.07) {$123$};
\node (A124) at (4.46-0.2,-0.07) {$124$};
\node (A12) at (3.9-0.2,0.08) {$12$};
\node (A13) at (3.13-0.2,-0.64) {$13$};
\node (A23) at (3.36-0.2,0.67) {$23$};
\node (A14) at (4.47-0.2,-0.83) {$14$};
\node (A24) at (4.73-0.2,0.48) {$24$};
\node (A1) at (3.56-0.2,-1.35) {$1$};
\node (A2) at (4.03-0.2,1.48) {$2$};
\node (A3) at (2.9-0.2,-0.04) {$3$};
\node (A4) at (5.15-0.2,-0.36) {$4$};
\end{tikzpicture}
$$ 

\noindent We have $\tilde{\chi}(\Delta(\bar{\mathcal{B}}_4))=-1+14-36+24=1.$ \\

\noindent For a simplicial complex $\Delta$, there is the Euler-Poincar\'e formula
$$\tilde{\chi}(\Delta)=\sum_{i=-1}^{\text{dim} (\Delta)}~ (-1)^i \tilde{\beta}_i(\Delta),$$
 where $\tilde{\beta}_i(\Delta)$ is the reduced Betti number (i.e., dimension of $i$th reduced homology) of $\Delta$. For posets (co)homology, we refer to the survey \cite[Section 1.5]{wa}.
 
 We refer to \cite{sta} for the notion of M\"obius function $\mu$ on a poset, and its properties. We denote the M\"obius invariant of a bounded poset $P$ by $\mu(P) := \mu(\hat{0},\hat{1})$. For any poset $P$, $\mu(\hat{P})=\tilde{\chi}(\Delta(P))$ \cite[Proposition 3.8.6]{sta}. For two bounded posets $P_1$ and $P_2$, we have $\mu(P_1 \times P_2)=\mu_1(P_1) \times \mu_2(P_2)$. Therefore $\mu(\mathcal{B}_n) = (-1)^n$, because $\mu(\mathcal{B}_1) = -1$. 
%$\mathcal{B}_{n}=\mathcal{B}_{n-1} \times\mathcal{B}_1$ and

\subsection{Cohen-Macaulay posets and edge labeling}
A poset $P$ is called \emph{pure} if all the maximal chains have the same length $\ell(P)$; and is called \emph{graded} if it is both pure and bounded. A cover relation $x \lessdot y$ in a poset $P$ is the relation $x<y$ such that $x \le z<y$ implies $x=z$. The cover relations on $P$ will be identified with the edges on its Hasse diagram.  An edge labeling of a poset $P$ is a map $\lambda: E(P) \to A$, where $E(P)$ is the set of edges and $A$ is some poset (for our purpose $A$ will be the integers).

\begin{definition}[\cite{wa}] Let $\lambda$ be an edge labeling of a graded poset $P$. $\lambda$ is said to be an \emph{edge-lexicographical labeling} (or \emph{EL-labeling}) if for each closed interval $[x, y]$
of $P$, there is a unique strictly increasing  maximal chain, and this maximal chain is lexicographically first among all other maximal chains of $[x, y]$. A \emph{dual EL-labeling} on a graded poset $P$ is an EL-labeling on its dual poset (i.e. order reversed).
\end{definition}

\begin{example} \label{B3label} Here is an EL-labeling for $\mathcal{B}_3$ (it generalizes to $\mathcal{B}_n$).
\begin{center} $\begin{tikzpicture}[scale = 1.5] \tiny
\node (A123) at (0,0) {$123$};
\node (A12) at (-1/1.8,-1/1.8) {$12$};
\node (A13) at (0,-1/1.8) {$13$};
\node (A23) at (1/1.8,-1/1.8) {$23$};
\node (A1) at (-1/1.8,-2/1.8) {$1$};
\node (A2) at (0,-2/1.8) {$2$};
\node (A3) at (1/1.8,-2/1.8) {$3$};
\node (A0) at (0,-3/1.8) {$\emptyset$};
\node  at (0.6/1.8,-2.6/1.8) {\color{blue}{$3$}};
\node  at (0.1/1.8,-2.4/1.8) {\color{blue}{$2$}};
\node  at (-0.6/1.8,-2.6/1.8){\color{blue}{$1$}};
\node  at (-1.1/1.8,-1.5/1.8){\color{blue}{$2$}};
\node  at (-0.6/1.8,-1.8/1.8){\color{blue}{$3$}};
\node  at (0.4/1.8,-1.8/1.8){\color{blue}{$3$}};
\node  at (1.1/1.8,-1.5/1.8){\color{blue}{$2$}};
\node  at (0.4/1.8,-1.2/1.8){\color{blue}{$1$}};
\node  at (-0.6/1.8,-1.2/1.8){\color{blue}{$1$}};
\node  at (0.6/1.8,-0.4/1.8) {\color{blue}{$1$}};
\node  at (0.1/1.8,-0.6/1.8) {\color{blue}{$2$}};
\node  at (-0.6/1.8,-0.4/1.8){\color{blue}{$3$}};
\tikzstyle{segm}=[-,>=latex, semithick]
\draw[<->] [segm] (A123)to(A12); \draw [segm] (A13)to(A1); 
\draw [segm] (A123)to(A13); \draw [segm] (A13)to(A3);  
\draw [segm] (A123)to(A23); \draw [segm] (A23)to(A2);  
\draw [segm] (A0)to(A2); \draw [segm] (A0)to(A3); 
 \draw [segm] (A0)to(A1); \draw [segm] (A12)to(A2);
\draw [segm] (A12)to(A1); \draw [segm] (A23)to(A3); 

\end{tikzpicture} 
$ \end{center}
\end{example}

A graded poset $P$ is called \emph{Cohen-Macaulay} \cite{bgs} (over $\mathbb{C}$) if for any open interval $(x,y)$ in $P$, $\tilde{\beta}_i(\Delta((x,y)),\mathbb{C})=0$ for all $i<$ $\dim(\Delta((x,y)))$.

\begin{theorem}\label{ELCM}
If  $\hat{P}$ is a graded poset which admits an EL-labeling, then it is Cohen-Macaulay. Moreover, the order complex $\Delta(P)$ has the homotopy type of a wedge of spheres $\mathbb{S}^d$ with $d=\ell(P)$. The number of spheres is one of the following equal quantities:
\begin{enumerate}
\item the number of (weakly) decreasing maximal  chains in $\hat{P}$;
\item the M\"obius invariant $\mu(\hat{P})$ times $(-1)^d$;
\item the reduced Betti number $\tilde{\beta}_d(\Delta(P))$;
\item the reduced Euler characteristic $\tilde{\chi}(\Delta(P))$ times $(-1)^d$.
\end{enumerate}
\end{theorem}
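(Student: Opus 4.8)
The plan is to invoke the theory of lexicographic shellability: an EL-labeling of $\hat{P}$ induces a shelling of the order complex $\Delta(P)$, and all four quantities then fall out of the standard dictionary relating shellings, homology, and the M\"obius function. This is the fundamental theorem of EL-shellability (due to Bj\"orner; see the survey \cite{wa}), so the proof is largely a matter of assembling its pieces in the present notation.

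First I would order the maximal chains of $\hat{P}$ lexicographically by their label words $\lambda(x_0,x_1),\lambda(x_1,x_2),\dots,\lambda(x_n,x_{n+1})$, where $\hat{0}=x_0\lessdot x_1 \lessdot\cdots\lessdot x_{n+1}=\hat{1}$. Deleting $\hat{0}$ and $\hat{1}$ identifies these chains with the facets of $\Delta(P)$, which is pure of dimension $d=\ell(P)$ because $\hat{P}$ is graded. The heart of the argument is to check that this linear order is a shelling and to compute its restriction map. Working inside a rank-two subinterval $[x_{i-1},x_{i+1}]$, the defining property of an EL-labeling — a unique, lexicographically first, strictly increasing maximal chain — lets one replace any locally non-increasing step $x_{i-1}\lessdot x_i \lessdot x_{i+1}$ by a lexicographically earlier chain that agrees elsewhere. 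From this one reads off
$$R(\mathfrak{m})=\{\,x_i : 1\le i\le n,\ \lambda(x_{i-1},x_i)\ge \lambda(x_i,x_{i+1})\,\},$$
the set of \emph{weak descents} of $\mathfrak{m}$, and verifies that $\mathfrak{m}$ minus any such vertex already lies in an earlier facet while the others do not. This verification is the main obstacle, and it is exactly the content of the Bj\"orner--Wachs shelling lemma; everything afterwards is formal.

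Granting the shelling, a pure shellable complex is homotopy equivalent to a wedge of top-dimensional spheres $\mathbb{S}^{d}$, one for each \emph{homology facet}, i.e.\ each facet $\mathfrak{m}$ with $R(\mathfrak{m})$ equal to the whole facet $\{x_1,\dots,x_n\}$. By the displayed formula this occurs exactly when $\lambda(x_{i-1},x_i)\ge\lambda(x_i,x_{i+1})$ for every $i$, that is, precisely when the label word of $\mathfrak{m}$ is weakly decreasing; this yields item (1) together with the wedge-of-spheres statement. Moreover each open interval of $\hat{P}$ inherits the EL-labeling by restriction, hence is itself shellable and has reduced homology concentrated in its top degree, which is the Cohen-Macaulay conclusion in the sense of \cite{bgs}.

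Finally the equalities (2)--(4) are soft. Since $\Delta(P)$ is Cohen-Macaulay, $\tilde{\beta}_k(\Delta(P))=0$ for $k<d$, so the number of spheres in the wedge equals $\tilde{\beta}_d(\Delta(P))$, giving item (3). The Euler--Poincar\'e formula then collapses to $\tilde{\chi}(\Delta(P))=(-1)^d\tilde{\beta}_d(\Delta(P))$, which rearranges to item (4). Lastly $\mu(\hat{P})=\tilde{\chi}(\Delta(P))$ by \cite[Proposition 3.8.6]{sta}, whence $(-1)^d\mu(\hat{P})=(-1)^d\tilde{\chi}(\Delta(P))=\tilde{\beta}_d(\Delta(P))$, which is item (2). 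All four quantities therefore coincide with the number of weakly decreasing maximal chains in $\hat{P}$.
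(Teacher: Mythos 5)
Your proposal is correct and takes essentially the same route as the paper: the paper's proof is just ``Merge \cite[Theorem 3.2]{bgs} and \cite[Theorem 3.2.4]{wa}'', and those two citations are precisely the lexicographic-shellability machinery you reconstruct (lex order on maximal chains, descent restriction map, homology facets $=$ weakly decreasing chains, inherited labelings on intervals giving Cohen-Macaulayness, and the formal identifications of (2)--(4) via Euler--Poincar\'e and $\mu(\hat{P})=\tilde{\chi}(\Delta(P))$). Since you likewise defer the one genuinely technical step (that lex order is a shelling with the stated restriction map) to the Bj\"orner--Wachs shelling lemma, both arguments ultimately rest on the same standard results.
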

\begin{proof}
Merge \cite[Theorem 3.2]{bgs} and \cite[Theorem 3.2.4]{wa}.
\end{proof}

Note that a Cohen-Macaulay poset need not have an EL-labeling (see \cite[p16]{bgs}).

\begin{remark} \label{dualrem} The order complex is invariant by dual, so is Cohen-Macaulayness. A dual EL-labeling has essentially the same consequences as an EL-labeling.
\end{remark}

The EL-labeling of Example \ref{B3label} on $\mathcal{B}_n$ makes it Cohen-Macaulay. Moreover, $\mu(\mathcal{B}_n) = (-1)^n$, so Theorem \ref{ELCM} shows why $\Delta(\bar{\mathcal{B}}_n)$ has the homotopy type of $\mathbb{S}^{n-2}$.

\subsection{GAP coding} \label{gap} 
Two intervals of finite groups $[H_i,G_i]$, $i=1,2$, are equivalent if there is an isomorphism $\phi: G_1/K_1 \to G_2/K_2$ such that $\phi(H_1/K_1)=H_2/K_2$, with $K_i$ the normal core of $H_i$ in $G_i$. For any interval of finite groups $[H,G]$, $G$ acts transitively by permutation on the finite set $G/H$ of cardinal $|G:H|$ called the degree of the action. On the other hand, for any transitive action of a finite group $G$ on the finite set $[n]:=\{1,\dots, n\}$, the stabilizer subgroup $H=G_{\{1\}}$ is of index $n$. Up to equivalence, the data of an interval of finite groups of index $n$ is given by a transitive permutation group of degree $n$, i.e. a subgroup of $S_n$ acting transitively on $[n]$. They are completely classified (up to equivalence) on GAP \cite{gap} up to degree $31$.

\section{Ore's theorem and dual version} \label{seclinprim}
\subsection{Ore's theorem on Boolean intervals of finite groups} \hspace*{1cm}\\
Ore has proved the following result in \cite[Theorem 4, p267]{or}.

\begin{theorem} \label{ore1}
 
A finite group $G$ is cyclic if and only if its subgroup lattice $\mathcal{L}(G)$ is distributive. 
\end{theorem}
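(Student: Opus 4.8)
The plan is to prove the two implications separately, with the forward implication (cyclic $\Rightarrow$ distributive) being routine and the converse carrying essentially all the difficulty.

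For the easy direction I would use that the subgroups of a cyclic group $G = C_n$ are in order-preserving bijection with the divisors of $n$: for each $d \mid n$ there is a unique subgroup of order $d$, intersection corresponds to $\gcd$ and join to $\operatorname{lcm}$. Hence $\mathcal{L}(C_n)$ is isomorphic to the divisor lattice of $n$, which for $n = \prod_i p_i^{a_i}$ factors as the direct product of the chains $\{0,1,\dots,a_i\}$. A direct product of chains is distributive, so $\mathcal{L}(G)$ is distributive.

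For the converse I would argue by a sequence of reductions, using that distributivity is inherited by every subgroup $H$ (whose lattice is the interval $[1,H]$, a sublattice) and by every quotient (whose lattice is an interval $[N,G]$). First I would rule out $C_p \times C_p$ as a subgroup: its subgroup lattice consists of $\hat{0}$, $\hat{1}$, and the $p+1 \ge 3$ subgroups of order $p$, and any three of these form a copy of the diamond $M_3$ (three pairwise incomparable elements sharing a common meet $\hat{0}$ and common join $\hat{1}$), which is not distributive. Consequently each Sylow $p$-subgroup of $G$ has a unique subgroup of order $p$. By the classical structure theory of $p$-groups, such a group is cyclic for odd $p$, and cyclic or generalized quaternion for $p=2$; the quaternion case is excluded because $Q_8$ already contains an $M_3$ (its three cyclic subgroups of order $4$ sit above the center), and every larger generalized quaternion group contains $Q_8$. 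Thus every Sylow subgroup of $G$ is cyclic.

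It remains to pass from ``all Sylow subgroups cyclic'' to ``$G$ cyclic'', and this is where I expect the main obstacle to lie. A finite group with cyclic Sylow subgroups is a Z-group, hence metacyclic and solvable, of the form $\langle a\rangle \rtimes \langle b\rangle$ with cyclic normal subgroup and cyclic quotient of coprime orders. If $G$ were non-abelian it would contain a non-abelian subgroup of order $pq$, i.e. $C_q \rtimes C_p$ with nontrivial action; there the normal $C_q$ together with any two of the $q \ge 2$ conjugate copies of $C_p$ again form an $M_3$, contradicting distributivity. Hence $G$ is abelian, and a finite abelian group with cyclic Sylow subgroups is their internal direct product over coprime orders, so cyclic. (Equivalently, one may run an induction on $|G|$: every proper subgroup is cyclic by the inductive hypothesis, so a non-cyclic counterexample would be a minimal non-cyclic group, and each item on the Miller--Moreno list --- $C_p \times C_p$, $Q_8$, and the non-abelian groups of order $pq$ --- has a lattice containing an $M_3$.) The genuine work is thus concentrated in exhibiting an explicit $M_3$ sublattice in each obstruction and in invoking the arithmetic structure theorem that converts cyclic Sylow data into honest cyclicity.
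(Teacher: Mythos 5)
Your reduction to cyclic Sylow subgroups is fine: distributivity passes to every interval $[1,H]$, the lattices of $C_p\times C_p$ and $Q_8$ do contain $M_3$, and the classical unique-subgroup-of-order-$p$ theorem then forces all Sylow subgroups to be cyclic. The genuine gap is in the last step. It is \emph{false} that a non-abelian group with all Sylow subgroups cyclic must contain a non-abelian subgroup of order $pq$. Take $G=C_7\rtimes C_9=\langle a,b\mid a^7=b^9=1,\ bab^{-1}=a^2\rangle$, where $b$ acts with order $3$. This is a non-abelian Z-group, but its only elements of order $3$ are the central elements $b^3,b^6$, so every subgroup of order $21$ equals $\langle a\rangle\times\langle b^3\rangle\cong C_{21}$; hence $G$ has no non-abelian subgroup of order $pq$ whatsoever. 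The same example defeats your parenthetical induction: the list of minimal non-cyclic groups is not $\{C_p\times C_p,\ Q_8,\ \text{non-abelian of order } pq\}$; it also contains the groups $C_q\rtimes C_{p^k}$ with $k\ge 2$ in which the generator acts with order exactly $p$ (such as $C_7\rtimes C_9$ above, or the dicyclic group $C_3\rtimes C_4$ of order $12$), and your argument never rules these out.

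The gap is repairable within your strategy: in such a group the kernel $\langle b^p\rangle\cong C_{p^{k-1}}$ of the action is central, the $q\ (\ge 3)$ Sylow $p$-subgroups pairwise intersect in exactly $\langle b^p\rangle$ and pairwise generate $G$, so any three of them form an $M_3$ and distributivity fails; with the corrected list the induction closes. It is worth noting how much machinery this route needs (unique-subgroup-of-order-$p$ classification, Burnside/Zassenhaus for Z-groups, classification of minimal non-cyclic groups), whereas the paper avoids all of it: it deduces Theorem \ref{ore1} from Lemma \ref{topBn} and Theorem \ref{ore2}, a purely lattice-theoretic induction on the rank of the Boolean top interval, in which a coatom $M$, its complement $M^{\complement}$, and distributivity of $\vee$ over $\wedge$ produce a single element $g=ab$ with $\langle H,g\rangle=G$; applied to $H=1$ this yields a generator of $G$, and the converse direction is the gcd/lcm distributivity you also use.
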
 
%\begin{proof}
% ($\Leftarrow$): It is just a particular case of Theorem \ref{ore2} with $H = \{1\}$.   \\
% ($\Rightarrow$): A finite cyclic group $G = C_n$ has exactly one subgroup of order $d$, denoted by $C_d$, for every divisor $d$ of $|G|$. Now $C_{d_1} \vee C_{d_2} = C_{lcm(d_1,d_2)}$ and $C_{d_1} \wedge C_{d_2} = C_{gcd(d_1,d_2)}$, but lcm and gcd are distributive, so the result follows.
%\end{proof}

Ore extended one side of this result to any distributive interval of finite groups in \cite[Theorem 7]{or}, and below we extend it to any top Boolean interval.

 \begin{definition} 
An interval of finite groups $[H,G]$ is called \emph{w-cyclic} (or \emph{weakly cyclic}) if there is $g \in G$ such that $\langle H,g \rangle = G$.
\end{definition}

\begin{lemma} \label{topred} 
An interval $[H , G]$ is w-cyclic if its top interval is so.
\end{lemma}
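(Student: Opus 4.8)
The plan is to translate w-cyclicity into a covering condition involving the coatoms of the interval, and then to check that passing from $[H,G]$ to its top interval $[T,G]$ does not change the relevant set of coatoms.

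First I would record the key observation that, for any subgroup $K$ with $H \le K \le G$, an element $g \in G$ satisfies $\langle K,g\rangle = G$ if and only if $g$ lies in no maximal subgroup of $G$ containing $K$. Indeed, if $\langle K,g\rangle \neq G$ then $\langle K,g\rangle$ is a proper subgroup containing $K$, hence is contained in some maximal subgroup $M$ with $K \le M < G$, and then $g \in M$; conversely, if $g \in M$ for such a maximal subgroup $M$, then $\langle K,g\rangle \le M \neq G$. The point is that the maximal subgroups of $G$ containing $K$ are exactly the coatoms of the interval $[K,G]$, so $\langle K,g\rangle = G$ amounts to ``$g$ avoids every coatom of $[K,G]$''.

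Next I would compare the coatoms of $[H,G]$ with those of $[T,G]$. By definition $T$ is the meet of all coatoms of $[H,G]$, i.e. the intersection of all maximal subgroups $M$ of $G$ with $H \le M$; in particular $T \in [H,G]$, so $H \le T$. I then claim that the maximal subgroups of $G$ containing $T$ are precisely those containing $H$. If $M$ is a maximal subgroup with $H \le M$, then $M$ is one of the subgroups whose intersection defines $T$, so $T \le M$; conversely, if $T \le M$ then $H \le T \le M$. Hence the set of coatoms of $[T,G]$ coincides with the set of coatoms of $[H,G]$; call it $\mathcal{M}$.

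Finally I would conclude by combining the two steps: applying the first observation with $K = T$ and with $K = H$, and using that both intervals have the same coatom set $\mathcal{M}$, gives that for every $g \in G$ one has $\langle T,g\rangle = G$ if and only if $g$ avoids all of $\mathcal{M}$ if and only if $\langle H,g\rangle = G$. Thus if $[T,G]$ is w-cyclic, a witness $g$ with $\langle T,g\rangle = G$ is automatically a witness with $\langle H,g\rangle = G$, so $[H,G]$ is w-cyclic. Everything here is formal once the covering reformulation is in place; the only step requiring genuine care is the identification of the two coatom sets, which rests on the defining property of $T$ as the meet of the coatoms together with the inclusion $H \le T$.
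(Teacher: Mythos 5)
Your proof is correct and is essentially the paper's own argument, just spelled out more explicitly: the paper likewise takes a witness $g$ with $\langle T,g\rangle = G$, notes that every coatom $M$ of $[H,G]$ contains $T$ by definition of the top interval (your identification of the two coatom sets), concludes $g \notin M$, and hence $\langle H,g\rangle$, being contained in no such $M$, must equal $G$. The only difference is that you package this as a full equivalence ($\langle T,g\rangle = G \Leftrightarrow \langle H,g\rangle = G$) rather than the one implication needed, which changes nothing of substance.
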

\begin{proof}
Let $[T,G]$ be the top interval and $g \in G$ with $\langle T,g \rangle = G$. For any maximal subgroup $M$ of $G$, we have $T \subseteq M$ by definition, and so $g \not \in M$,  then a fortiori $\langle H,g \rangle \not \subseteq M$. It follows that $\langle H,g \rangle=G$.
\end{proof}

\begin{theorem} \label{ore2}
If the interval $[H,G]$ is top Boolean, then it is w-cyclic.
\end{theorem}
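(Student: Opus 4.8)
The plan is to reduce immediately to the top interval and then to produce a generator by an explicit product construction dictated by the Boolean structure. By Lemma \ref{topred} it suffices to show that the top interval $[T,G]$, which is Boolean of some rank $n$ by hypothesis, is itself w-cyclic; so I would replace $H$ by $T$ and assume from the outset that $[H,G]=[T,G]\cong \mathcal{B}_n$. Writing $M_1,\dots,M_n$ for the coatoms of this interval --- equivalently, the maximal subgroups of $G$ containing $T$ --- the goal becomes finding $g\in G$ lying outside every $M_k$. Indeed, $\langle T,g\rangle$ is a subgroup containing $T$, so if it were proper it would sit inside some maximal subgroup above $T$, i.e. some coatom $M_k$; conversely any $g\notin\bigcup_k M_k$ forces $\langle T,g\rangle=G$. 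Thus the theorem amounts to the covering statement $\bigcup_{k=1}^n M_k\neq G$.

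The tempting route here is inclusion--exclusion on the indices $|G:M_S|$ of the meets $M_S=\bigcap_{i\in S}M_i$, and I expect this to be the main obstacle: without a multiplicativity $|G:M_S|=\prod_{i\in S}|G:M_i|$ --- which is exactly the group-complemented hypothesis imposed later --- the alternating sum is not visibly positive. Worse, a group can be the union of three or more proper subgroups (e.g. the Klein four-group), so the statement genuinely needs the Boolean shape of the configuration and cannot follow from cardinality bounds alone.

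The way I would sidestep this is to build $g$ directly from the atoms. Let $A_1,\dots,A_n$ be the atoms of $[T,G]\cong\mathcal{B}_n$, paired with the coatoms so that $A_i$ corresponds to $\{i\}$ and $M_j$ to $[n]\setminus\{j\}$. Reading off the order relations of $\mathcal{B}_n$ gives $A_i\subseteq M_j$ whenever $j\neq i$, while $A_i\not\subseteq M_i$. Hence I may choose $x_i\in A_i\setminus M_i$ for each $i$; by construction $x_i\in M_j$ for all $j\neq i$ and $x_i\notin M_i$. Set $g:=x_1x_2\cdots x_n$. For each fixed $k$, group the factors as $g=u_k\,x_k\,v_k$ with $u_k=x_1\cdots x_{k-1}$ and $v_k=x_{k+1}\cdots x_n$; every factor other than $x_k$ lies in the subgroup $M_k$, so $u_k,v_k\in M_k$, and therefore $g\in M_k$ would force $x_k=u_k^{-1}g\,v_k^{-1}\in M_k$, contradicting $x_k\notin M_k$. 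Thus $g\notin M_k$ for every $k$.

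With $g$ outside all coatoms, the reformulation above yields $\langle T,g\rangle=G$, so $[T,G]$ is w-cyclic, and Lemma \ref{topred} promotes this to w-cyclicity of $[H,G]$, completing the proof. (The degenerate case $n=0$, i.e. $T=G$, is covered by taking $g$ to be the identity, an empty product.) The only creative step is the passage from the covering formulation to the atom-product construction; once the relations $A_i\subseteq M_j$ for $j\neq i$ and $A_i\not\subseteq M_i$ are extracted from $\mathcal{B}_n$, the verification that $g$ avoids each $M_k$ is a one-line normal-form computation inside the subgroup $M_k$.
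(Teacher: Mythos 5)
Your proof is correct, and it reaches the paper's conclusion by a genuinely different verification. Both arguments begin identically: reduce to the Boolean top interval $[T,G]$ via Lemma \ref{topred}, and both in effect construct the same generator, namely a product $g=x_1\cdots x_n$ of one representative per atom with $x_i\in A_i\setminus M_i$ (unwinding the paper's induction, its $g=ab$ is exactly such a product, since $A_i\cap M_i=T$ makes the rank-one choices $x_i\in A_i\setminus T$ coincide with yours). The difference lies in how one certifies $\langle T,g\rangle=G$. The paper inducts on the rank: it picks a coatom $M$ and its complement $M^{\complement}$, gets $a,b$ with $\langle H,a\rangle=M$ and $\langle H,b\rangle=M^{\complement}$, sets $g=ab$, deduces $\langle H,a,g\rangle=\langle H,g,b\rangle=G$, and then descends to $\langle H,g\rangle=G$ through the distributive identity $\langle H,g\rangle\vee(M\wedge M^{\complement})=(\langle H,g\rangle\vee M)\wedge(\langle H,g\rangle\vee M^{\complement})$. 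You avoid both the induction and the distributivity computation: you recast w-cyclicity as the covering statement $\bigcup_k M_k\neq G$ (legitimate, since the coatoms of $[T,G]$ are precisely the maximal subgroups of $G$ containing $T$), and check that $g$ lies outside each $M_k$ by the one-line observation that all factors of $g$ except $x_k$ lie in the subgroup $M_k$, so $g\in M_k$ would force $x_k=u_k^{-1}gv_k^{-1}\in M_k$, a contradiction. Your route is more elementary and self-contained — it uses only the atom/coatom incidences of $\mathcal{B}_n$ and closure of subgroups under multiplication — and it makes explicit the coatom-avoidance formulation $\bigcup_k M_k \neq G$ that the paper itself exploits later (in the discussion of strongly w-cyclic intervals). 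What the paper's version buys in exchange is that it isolates distributivity as the operative axiom, which is the form of the argument that transfers to Ore's original distributive setting and to the lattice/planar-algebra generalizations motivating the paper.
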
 
\begin{proof}
By Lemmas \ref{topBn} and \ref{topred}, the proof reduces to the Boolean case. We prove it by induction on the rank of the lattice. For rank $1$, consider $g \in G$ such that $g \not \in H$, then $\langle H,g \rangle = G$ by maximality. For rank $n>1$, let $M$ be a coatom of $[H,G]$, and $M^{\complement}$ be its complement. Using induction, we can assume $[H,M]$ and $[H,M^{\complement}]$  both to be w-cyclic, i.e. there are $a, b \in G$ such that $\langle H,a \rangle = M$ and $\langle H,b \rangle = M^{\complement}$. For $g=a  b$ we have $a=g  b^{-1}$ and $b=a^{-1}g$, so $$\langle H,a,g \rangle = \langle H,g,b \rangle = \langle H,a,b \rangle =  M \vee M^{\complement} = G.$$ Now, $\langle H,g \rangle =  \langle H,g \rangle \vee H = \langle H,g \rangle \vee (M \wedge M^{\complement})$ but by distributivity $$\langle H,g \rangle \vee (M \wedge M^{\complement}) = (\langle H,g \rangle \vee M \rangle) \wedge (\langle H,g \rangle \vee M^{\complement} \rangle).$$ So $ \langle H,g \rangle = \langle H,a,g \rangle \wedge \langle H,g,b \rangle = G$. The result follows.  
\end{proof}
Theorem  \ref{ore1} follows from Theorem \ref{ore2} and the fact that gcd and lcm are distributive over each other on $C_n$.
\subsection{A dual version of Ore's theorem} \hspace*{1cm} \\
In this section, we extend a result on intervals of finite groups $[H,G]$, obtained in a previous paper \cite[Section 6B]{pa}, of the second author. 

 \begin{definition} \label{fixstab} Let $W$ be a representation of a group $G$, $K$ a subgroup of $G$, and $X$ a subspace of $W$. We define the \emph{fixed-point subspace} $$W^{K}:=\{w \in W \ \vert \  kw=w \ , \forall k \in K  \},$$ and the \emph{pointwise stabilizer subgroup} $$G_{(X)}:=\{ g \in G \  \vert \ gx=x \ , \forall x \in X \}.$$  \end{definition} 
 
 A group $G$ having a faithful irreducible complex representation $V$ is sometimes called as \emph{linearly primitive} (note that faithfulness is equivalent to have $G_{(V)} = 1$). This notion extends to intervals as follows.

 \begin{definition} 
An interval of finite groups $[H,G]$ is called \emph{linearly primitive} if there is an irreducible complex representation $V$ of $G$ such that $G_{(V^H)} = H$.
\end{definition}
 
 \begin{lemma} \label{tech} Let $G$ be a finite group, $H,K$ two subgroups, $V$ a representation of $G$ and $X,Y$ two subspaces of $V$. Then
\begin{itemize}
\item[(1)] $H \subseteq K \Rightarrow V^{K} \subseteq V^{H}$,
\item[(2)] $X \subseteq Y \Rightarrow G_{(Y)} \subseteq G_{(X)}$,
\item[(3)] $V^{H \vee K} = V^H \cap V^K $,
\item[(4)] $H \subseteq G_{(V^H)}$,
\item[(5)] $ V^{G_{(V^H)}} = V^H$,
\item[(6)] $H \subseteq K$ and $V^{K} \subsetneq V^{H}$ imply $K \not \subseteq G_{(V^H)}$.
\end{itemize}
\end{lemma}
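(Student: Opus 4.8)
The plan is to verify each of the six items directly from the definitions of the fixed-point subspace $W^K$ and the pointwise stabilizer $G_{(X)}$, since these are all elementary consequences of the order-reversing (Galois-like) nature of the two maps $K \mapsto V^K$ and $X \mapsto G_{(X)}$. First I would observe that items (1) and (2) are immediate antitonicity statements. For (1), if $H \subseteq K$ and $v \in V^K$, then $kv = v$ for all $k \in K$, hence in particular for all $h \in H$, so $v \in V^H$. Item (2) is the dual: if $X \subseteq Y$ and $g$ fixes every point of $Y$, then it fixes every point of $X$.

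Next I would handle (3), the statement $V^{H \vee K} = V^H \cap V^K$. The inclusion $\subseteq$ follows from (1) applied to $H \subseteq H \vee K$ and $K \subseteq H \vee K$. For the reverse inclusion, suppose $v \in V^H \cap V^K$; then $v$ is fixed by every element of $H$ and of $K$, so the stabilizer of $v$ is a subgroup containing both $H$ and $K$, hence it contains the join $H \vee K = \langle H, K\rangle$ (the smallest subgroup containing both). Thus $v \in V^{H \vee K}$. The key point here is that the full stabilizer $\{g \in G : gv = v\}$ is genuinely a subgroup, so it absorbs the generated subgroup $\langle H, K \rangle$.

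Then I would prove (4) and (5), which express that $V^{(-)}$ and $G_{(-)}$ form a Galois connection with the expected closure behavior. Item (4), $H \subseteq G_{(V^H)}$, is direct: every $h \in H$ fixes every vector in $V^H$ by the very definition of $V^H$. For (5), I would get $V^H \subseteq V^{G_{(V^H)}}$ from (4) together with (1) (larger subgroup gives smaller fixed space, wait --- rather: $H \subseteq G_{(V^H)}$ gives $V^{G_{(V^H)}} \subseteq V^H$ by (1)); and the reverse inclusion $V^H \subseteq V^{G_{(V^H)}}$ holds because every vector of $V^H$ is fixed by every element of $G_{(V^H)}$ by the definition of the latter. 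Combining the two inclusions yields equality. This is the standard ``$\phi\psi\phi = \phi$'' identity of a Galois connection, so I expect no obstacle.

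Finally, (6) is a contrapositive-flavored consequence of (5). Assuming $H \subseteq K$ with $V^K \subsetneq V^H$, I would suppose for contradiction that $K \subseteq G_{(V^H)}$. Applying (1) to this inclusion gives $V^{G_{(V^H)}} \subseteq V^K$, and then (5) rewrites the left side as $V^H$, yielding $V^H \subseteq V^K$; combined with (1) applied to $H \subseteq K$ (which gives $V^K \subseteq V^H$), this forces $V^K = V^H$, contradicting the strict inclusion. I do not anticipate any serious obstacle in this lemma: the only mild subtlety is keeping the directions of the antitone maps straight in (5) and (6), which is where a careless sign-of-inclusion error could creep in, so that is the step I would check most carefully.
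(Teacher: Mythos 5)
Your proof is correct: all six items are verified exactly as intended, and the paper itself dismisses this lemma with the single word ``Straightforward,'' meaning precisely the direct definition-chasing (Galois connection) argument you carried out. The one wobble in your write-up of (5) is self-corrected in the text, and the final chain of inclusions in each item is sound.
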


\begin{proof}
Straightforward.
% (1) and (2) are immediate. 
%\noindent (3) First $H,K \subseteq H \vee K$, so $V^{H \vee K} $ is included in $ V^H$ and  $V^{K}$, so in $V^H \cap V^{K}$. Now take $v \in V^H \cap V^{K}$, then $\forall h \in H$ and $\forall k \in K$, $hv = kv = v$, but any element $g \in H \vee K$ is of the form $h_1k_1h_2k_2 \cdots h_rk_r $ with $h_i \in H$ and $k_i \in K$, it follows that $gv=v$ and so $V^H \cap V^{K} \subseteq V^{H \vee K}$. (4) Take $h \in H$ and $v \in V^H$. Then by definition $hv = v$, so $H \subseteq  G_{(V^H)}$.  (5) From (1) and (4) we deduce that $V^{G_{(V^H)}} \subseteq V^H$. Now take $v \in V^H$ and $g \in G_{(V^H)}$, by definition $gv = v$, so $V^H \subseteq  V^{G_{(V^H)}}$ also. \\
%(6) Suppose that $K \subseteq G_{(V^H)}$, then $V^K \supseteq V^{G_{(V^H)}} = V^H$ by (1) and (5). Hence $V^K = V^H$ by (1), contradiction with $V^K \subsetneq V^H$.
\end{proof}

%
%\begin{remark} For $H = 1$, we recover the usual linear primitivity, i.e. the existence of an irreducible faithful complex representation. 
%\end{remark}

\begin{lemma} \label{indexrep}
Let $[H,G]$ be an interval of finite groups. Let $V_1, \dots , V_r$ be  the irreducible complex representations of $G$ (up to equivalence). Then $$|G:H| = \sum_i \dim(V_i)\dim(V_i^H).$$   
\end{lemma}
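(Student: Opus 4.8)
The plan is to prove the identity
$$|G:H| = \sum_i \dim(V_i)\dim(V_i^H)$$
by computing the dimension of the permutation representation $\mathbb{C}[G/H]$ in two different ways and comparing. The key observation is that $\mathbb{C}[G/H]$, the complex vector space with basis the cosets $G/H$, is a representation of $G$ of dimension exactly $|G:H|$, so the left-hand side is $\dim \mathbb{C}[G/H]$. The right-hand side must therefore arise from decomposing this permutation representation into irreducibles and reading off the multiplicities.

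First I would identify the permutation representation $\mathbb{C}[G/H]$ with the induced representation $\mathrm{Ind}_H^G(\mathbf{1})$, where $\mathbf{1}$ is the trivial representation of $H$. Then by Frobenius reciprocity, the multiplicity $m_i$ of each irreducible $V_i$ in $\mathbb{C}[G/H]$ equals $\langle \mathrm{Res}_H^G V_i, \mathbf{1}\rangle_H$, which is precisely the dimension of the $H$-fixed subspace $V_i^H$ (the trivial isotypic component of $V_i$ restricted to $H$). Hence $m_i = \dim(V_i^H)$. Writing the decomposition $\mathbb{C}[G/H] \cong \bigoplus_i V_i^{\oplus m_i}$ and taking dimensions gives
$$|G:H| = \dim\mathbb{C}[G/H] = \sum_i m_i \dim(V_i) = \sum_i \dim(V_i^H)\dim(V_i),$$
which is the claimed identity.

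An equivalent and perhaps cleaner route, avoiding explicit appeal to induction and Frobenius reciprocity, is purely character-theoretic: let $\chi_{\mathrm{perm}}$ be the character of $\mathbb{C}[G/H]$, so $\chi_{\mathrm{perm}}(g)$ counts fixed cosets and $\chi_{\mathrm{perm}}(1) = |G:H|$. The multiplicity of $V_i$ is $\langle \chi_{\mathrm{perm}}, \chi_i\rangle = \frac{1}{|G|}\sum_{g} \chi_{\mathrm{perm}}(g)\overline{\chi_i(g)}$, and a standard computation identifies this inner product with $\frac{1}{|H|}\sum_{h\in H}\chi_i(h) = \dim(V_i^H)$, the latter equality being the formula for the dimension of the fixed subspace as the averaging projection onto $H$-invariants. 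Evaluating the decomposition at the identity then yields the result.

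I do not expect any serious obstacle here: the statement is a classical consequence of the representation theory of finite groups. The only point requiring mild care is justifying that $\dim(V_i^H)$ equals the multiplicity of the trivial $H$-representation in $\mathrm{Res}_H^G V_i$, i.e. that there are no nontrivial one-dimensional contributions hidden in the fixed space; this is immediate since $V_i^H$ is by definition exactly the trivial isotypic component. Since the paper works over $\mathbb{C}$, all representations are semisimple and characters separate them, so both proposed computations are rigorous without extra hypotheses.
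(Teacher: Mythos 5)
Your proof is correct, and both of your suggested routes (Frobenius reciprocity, or the direct character computation) are rigorous; but it is not quite the route the paper takes. The paper never mentions $\mathbb{C}[G/H]$, induced representations, or Frobenius reciprocity: it starts from the regular representation $V=\mathbb{C}G$, invokes the decomposition $V \simeq \bigoplus_i V_i^{\oplus \dim(V_i)}$, applies the $H$-fixed-point functor to both sides (which commutes with direct sums), and observes that $V^H=(\mathbb{C}G)^H$ has a basis of coset sums, hence dimension $|G:H|$; comparing dimensions gives the identity. The two arguments are dual descriptions of the same object, since $(\mathbb{C}G)^H$ is exactly a permutation module on the cosets, but the key tools differ: you decompose the permutation module directly and need Frobenius reciprocity (or the averaging-projection trace formula) to identify the multiplicities as $\dim(V_i^H)$, whereas the paper sidesteps any multiplicity computation by inheriting the decomposition from the regular representation, making its proof slightly more self-contained and elementary. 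Your version buys a bit more conceptual information in return --- it makes explicit that $\mathbb{C}[G/H]$ is $\mathrm{Ind}_H^G(\mathbf{1})$ and that the irreducibles occurring in it are precisely those with $V_i^H\neq 0$ --- which connects the lemma to the standard theory of permutation characters, but that extra machinery is not needed for the dimension count itself.
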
 
\begin{proof}  Let $V = \mathbb{C}G$ be the regular representation of $G$.  By \cite{ser}, $ V \simeq \bigoplus_i V_i^{\oplus \dim(V_i)}. $ Then
 $$ V^H \simeq \bigoplus_i (V^H_i)^{\oplus \dim(V_i)}. $$
The result follows by comparing the dimensions on both sides, and observing that the space $V^H$ has a basis in bijection with $G/H$, so has dimension $|G:H|$.
\end{proof}  

\begin{lemma} \label{premain}
Let $[H,G]$ be a Boolean interval of finite groups, and $V$ be a finite dimensional complex representation of $G$. If $H \subsetneq G_{(V^H)}$, then $$\sum_{K \in [H,G]} \mu(H,K)\dim(V^K) = 0.$$ 
\end{lemma}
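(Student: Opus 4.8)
The plan is to exploit the Boolean structure to rewrite the alternating sum as an inclusion–exclusion over intersections of fixed-point subspaces, and then to extract from the hypothesis a single atom of $[H,G]$ that is redundant in every such intersection, forcing the sum to telescope to zero.

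First I would fix an identification of $[H,G]$ with $\mathcal{B}_n$: let $A_1,\dots,A_n$ be the atoms, so that every $K\in[H,G]$ is $K=\bigvee_{i\in S}A_i$ for a unique $S\subseteq[n]$, with $\mu(H,K)=(-1)^{|S|}$ (since $[H,K]\cong\mathcal{B}_{|S|}$ and $\mu(\mathcal{B}_r)=(-1)^r$). Applying Lemma \ref{tech}(3) repeatedly gives $V^{K}=V^{\bigvee_{i\in S}A_i}=\bigcap_{i\in S}V^{A_i}$, where the empty intersection is read as $V^{H}$. Hence the target sum becomes
$$\sum_{S\subseteq[n]}(-1)^{|S|}\dim\Big(\bigcap_{i\in S}V^{A_i}\Big),$$
an alternating sum of dimensions of subspaces of $U:=V^{H}$ (each $V^{A_i}\subseteq V^{H}$ by Lemma \ref{tech}(1)).

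Next I would extract the needed atom from the hypothesis. Set $P:=G_{(V^{H})}$; this is a subgroup of $G$, and $H\subseteq P$ by Lemma \ref{tech}(4), so — since $[H,G]$ comprises \emph{all} subgroups between $H$ and $G$ — we have $P\in[H,G]$. The hypothesis $H\subsetneq P$ then says $P\ne\hat{0}$ in $\mathcal{B}_n$, so some atom $A_j$ satisfies $A_j\subseteq P$. For this $j$, Lemma \ref{tech}(1) gives $V^{P}\subseteq V^{A_j}$, while Lemma \ref{tech}(5) gives $V^{P}=V^{G_{(V^{H})}}=V^{H}$; combined with $V^{A_j}\subseteq V^{H}$ this yields $V^{A_j}=V^{H}=U$.

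Finally I would cancel the sum in pairs using this $A_j$. Because $V^{A_j}=U$ contains every $V^{A_i}$, adjoining $j$ to any $S\subseteq[n]\setminus\{j\}$ leaves the intersection unchanged: $\bigcap_{i\in S\cup\{j\}}V^{A_i}=\bigcap_{i\in S}V^{A_i}$. Pairing each such $S$ with $S\cup\{j\}$ therefore matches equal dimensions carrying opposite signs $(-1)^{|S|}$ and $(-1)^{|S|+1}$, so the whole sum collapses to $0$. The only genuinely substantive step is the middle one: recognizing that $G_{(V^{H})}$ must itself lie in the Boolean interval, and hence sit above an atom whose fixed space is already all of $V^{H}$. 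Everything else is bookkeeping with Lemma \ref{tech} and the Möbius values of $\mathcal{B}_n$.
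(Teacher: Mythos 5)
Your proof is correct and is essentially the paper's own argument in different notation: your pairing $S \leftrightarrow S \cup \{j\}$ is exactly the paper's bijection $[H,A^{\complement}] \ni T \mapsto T \vee A \in [A,G]$ with the sign flip $\mu(H,T\vee A) = -\mu(H,T)$, and your key fact $V^{A_j} = V^{H}$ is the same one the paper extracts (via part (6) of Lemma \ref{tech} rather than parts (1), (4), (5), an immaterial difference). If anything, you are slightly more explicit than the paper about why the required atom exists, namely that $G_{(V^{H})}$ itself lies in $[H,G]$ and sits strictly above $\hat{0}$.
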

\begin{proof} Let $A$ be an atom of $[H,G]$ with $A \subseteq G_{(V^H)}$. By part (6) of Lemma \ref{tech}, $V^A = V^H$.  Consider the partition $[H,G] = [H,A^{\complement}] \sqcup [A,G],$ and the bijective map $$[H,A^{\complement}] \ni T \mapsto T \vee A \in [A,G].$$ Now $\mu(H,T \vee A) = -\mu(H,T)$, and by part (3) of Lemma \ref{tech}, $V^{T \vee A} = V^T \cap V^A = V^T$. So 
$$ \sum_{K \in [A,G]} \mu(H,K)\dim(V^K) = \sum_{T \in [H,A^{\complement}]} \mu(H,T\vee A)\dim(V^{T \vee A}), $$
$$\hspace*{6.22cm} = - \sum_{T \in [H,A^{\complement}]} \mu(H,T)\dim(V^T). \hspace*{2.32cm} \qedhere $$
\end{proof}

The following theorem is the main result of Section \ref{seclinprim}.

\begin{theorem} \label{main}
 Let $[H,G]$ be a Boolean interval of finite groups. If its dual Euler totient $\hat{\varphi}(H,G)$ is nonzero, then $[H,G]$ is linearly primitive.
\end{theorem}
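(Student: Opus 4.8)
The plan is to argue by contraposition: I will show that if $[H,G]$ fails to be linearly primitive, then its dual Euler totient $\hat{\varphi}(H,G)$ must vanish. The bridge between the purely combinatorial quantity $\hat{\varphi}$ and representation theory is Lemma \ref{indexrep}, which expresses an index as a weighted sum over the fixed-point dimensions of the irreducible representations of $G$.

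First I would expand $\hat{\varphi}(H,G)$ in representation-theoretic terms. Letting $V_1, \dots, V_r$ denote the irreducible complex representations of $G$, I apply Lemma \ref{indexrep} to each interval $[K,G]$ to write $|G:K| = \sum_i \dim(V_i)\dim(V_i^K)$. Substituting this into the definition $\hat{\varphi}(H,G) = \sum_{K \in [H,G]} \mu(H,K)|G:K|$ and exchanging the two finite sums yields
$$\hat{\varphi}(H,G) = \sum_i \dim(V_i) \sum_{K \in [H,G]} \mu(H,K)\dim(V_i^K).$$

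Now suppose $[H,G]$ is not linearly primitive. By part (4) of Lemma \ref{tech} we always have $H \subseteq G_{(V_i^H)}$, so the failure of linear primitivity forces $H \subsetneq G_{(V_i^H)}$ for \emph{every} index $i$. This strict containment is precisely the hypothesis of Lemma \ref{premain}, applied with $V = V_i$: each inner sum $\sum_{K} \mu(H,K)\dim(V_i^K)$ vanishes. Hence every term of the outer sum is zero, giving $\hat{\varphi}(H,G) = 0$, contrary to the assumption. This establishes the contrapositive and hence the theorem.

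As for the main difficulty: the substantive work is all carried by the vanishing statement Lemma \ref{premain}, whose proof exploits the Boolean structure through the complement-based partition $[H,G] = [H,A^{\complement}] \sqcup [A,G]$ together with the sign-reversing bijection $T \mapsto T \vee A$. Once that lemma and the decomposition of Lemma \ref{indexrep} are in hand, the remaining step here is only the interchange of two finite summations, so no further obstacle arises. I note in passing that the Boolean hypothesis enters the present argument solely through Lemma \ref{premain}; consequently the proof transfers verbatim to any setting in which that vanishing conclusion continues to hold.
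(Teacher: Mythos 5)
Your proposal is correct and follows essentially the same route as the paper's own proof: contraposition, expansion of $|G:K|$ via Lemma \ref{indexrep}, interchange of the two finite sums, and termwise vanishing by Lemma \ref{premain} applied to each irreducible $V_i$ with $H \subsetneq G_{(V_i^H)}$. Your closing observation that the Boolean hypothesis enters only through Lemma \ref{premain} is accurate and consistent with how the paper isolates the combinatorial content there.
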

\begin{proof}
If $[H,G]$ is not linearly primitive, then for any irreducible complex representation $V_i$ of $G$, we have that $H \subsetneq G_{(V_i^H)}$. So by Lemmas \ref{indexrep} and \ref{premain}
$$ \hat{\varphi}(H,G)= \sum_{K \in [H,G]} \mu(H,K) |G:K|  = \sum_{K \in [H,G]} \mu(H,K) \sum_i \dim(V_i)\dim(V_i^K)$$
$$ \hspace*{3.17cm} = \sum_i \dim(V_i) \sum_{K \in [H,G]}\mu(H,K)\dim(V_i^K) = 0. \hspace*{3.17cm} \qedhere $$
\end{proof}  

\begin{proposition} \label{chipos}
If the interval $[H,G]$ is Boolean, then $\varphi(H,G) > 0$.  
\end{proposition}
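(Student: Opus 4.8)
The plan is to derive positivity directly from the strong form of Ore's theorem already available, namely Theorem \ref{ore2}. First I would observe that a Boolean interval is in particular top Boolean: in $\mathcal{B}_n$ the meet of all the coatoms is $\hat{0}$, so the top interval $[t,\hat{1}]$ coincides with the whole interval $[H,G]$, which is Boolean by hypothesis. Hence Theorem \ref{ore2} applies and $[H,G]$ is w-cyclic, i.e. there exists $g \in G$ with $\langle H,g \rangle = G$.

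Next I would translate w-cyclicity into a statement about the Euler totient. By Definition \ref{euler}, $\varphi(H,G)$ counts the cosets $Hg$ with $\langle Hg \rangle = G$, and $\langle Hg \rangle = \langle H,g \rangle$. The element $g$ produced above yields one such coset, so $\varphi(H,G) \ge 1 > 0$. This already proves the proposition; the only point to check is that ``generating'' is a property of the coset $Hg$ and not of the representative $g$, which is immediate since $\langle H,hg \rangle = \langle H,g \rangle$ for all $h \in H$.

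For completeness, and to foreshadow the quantitative statements in the group-complemented case, I would also record the inclusion-exclusion count. Writing $M_1,\dots,M_n$ for the coatoms of $[H,G]$, a coset $Hg$ fails to generate $G$ exactly when $g$ lies in some $M_i$, so
$$\varphi(H,G) = \sum_{S \subseteq \{1,\dots,n\}} (-1)^{|S|}\, \Big| \textstyle\bigwedge_{i\in S} M_i : H \Big|,$$
in agreement with the M\"obius formula after Definition \ref{euler} upon reindexing by complements. The main (and only) obstacle to a purely combinatorial positivity argument is that the indices $|\bigwedge_{i\in S} M_i : H|$ need not be multiplicative over $S$ in general. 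They are when $[H,G]$ is group-complemented: there $\bigwedge_{i\in S} M_i$ has index $\prod_{i\notin S}|A_i:H|$ over $H$, where $A_i = M_i^{\complement}$ are the atoms, so the sum collapses to $\prod_{i=1}^{n}(|A_i:H|-1)$, each factor being positive since $|A_i:H| \ge 2$. Because multiplicativity can fail for a general Boolean interval, I would not try to force a product formula, and instead rely on the qualitative existence result of Theorem \ref{ore2}, which needs no such hypothesis.
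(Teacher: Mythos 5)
Your proof is correct and is essentially the paper's own argument: the paper also deduces Proposition \ref{chipos} directly from Theorem \ref{ore2}, since that theorem produces a coset $Hg$ with $\langle Hg \rangle = G$, giving $\varphi(H,G) \geq 1$. Your closing inclusion--exclusion discussion is extra commentary rather than part of the proof, and correctly flagged as such, so it does not change the assessment.
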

\begin{proof}
The number of cosets $Hg$ with $\langle Hg \rangle = G$ is nonzero by Theorem \ref{ore2}.
\end{proof} 

Let $[H,G]$ be a Boolean interval of finite groups. In general  $\varphi(H,G) \neq \hat{\varphi}(H,G) $. Here is the smallest example: let $A_2(2)$ be the simple group of order $168$, and $D_8$ a subgroup isomorphic to the dihedral group of order $8$. Then the interval $[D_8,A_2(2)]$ has $\varphi = 16$ and $\hat{\varphi} = 8$. 

\begin{lemma} \label{eupri} Let $[H,G]$ be an interval of finite groups with $|G:H|$ a prime-power $p^m$ and $p$ not a divisor of $\mu(H,G)$. Then $\hat{\varphi}(H,G)$ is nonzero.
\end{lemma}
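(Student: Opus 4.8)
The plan is to work modulo the prime $p$ and exploit the fact that the index $|G:H|$ is a prime power. First I would recall the definition $\hat{\varphi}(H,G)=\sum_{K \in [H,G]} \mu(H,K)\,|G:K|$ and observe that, by the tower law $|G:H|=|G:K|\cdot|K:H|$, for every $K \in [H,G]$ the index $|K:H|$ divides $|G:H|=p^m$. Hence $|G:K|=p^m/|K:H|$ is itself a power of $p$ for each $K$. This is the structural input that makes the whole sum amenable to a congruence argument.

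The key step is then to reduce the sum modulo $p$. For every $K \subsetneq G$ we have $|G:K|>1$, and since $|G:K|$ is a power of $p$ this forces $p \mid |G:K|$, so every such term $\mu(H,K)\,|G:K|$ vanishes modulo $p$. The only surviving term is the one indexed by $K=G$, which contributes $\mu(H,G)\,|G:G|=\mu(H,G)$. Collecting these observations gives the congruence
$$\hat{\varphi}(H,G) \equiv \mu(H,G) \pmod p.$$
Since by hypothesis $p$ does not divide $\mu(H,G)$, the right-hand side is nonzero modulo $p$, and therefore $\hat{\varphi}(H,G) \not\equiv 0 \pmod p$, which in particular implies $\hat{\varphi}(H,G)\neq 0$.

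There is no genuine obstacle here beyond correctly identifying that $K=G$ is the unique index with $|G:K|$ coprime to $p$; the argument is essentially a one-line congruence once the prime-power index is used to force every proper term into the ideal $(p)$. I would take care only to state the tower-law divisibility cleanly, so that the passage from "$|G:K|$ divides $p^m$" to "$p \mid |G:K|$ whenever $K\neq G$" is explicit. No appeal to the lattice being Boolean or distributive is needed, so this lemma applies to arbitrary intervals of the stated index type.
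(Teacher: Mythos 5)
Your proof is correct and is essentially the paper's own argument: the paper likewise splits off the $K=G$ term and notes that every term with $K \in [H,G)$ is a multiple of $p$ (since $|G:K|$ is a nontrivial divisor of $p^m$), while $\mu(H,G)$ is not. Your congruence-modulo-$p$ phrasing is just a restatement of that divisibility observation.
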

\begin{proof}
 $\hat{\varphi}(H,G)= \sum_{K \in [H,G)} \mu(H,K) |G:K|  +  \mu(H,G)$ is nonzero because the first component of the sum is a multiple of $p$, whereas the second is not.
\end{proof}

This result can be applied in the Boolean case because $\mu(\mathcal{B}_n) = (-1)^n$. This leads us to Conjecture \ref{hatchi}. More strongly, we wonder whether $\varphi(H,G) $ and $\hat{\varphi}(H,G)$ are bounded below by $ 2^n$, in case $[H,G]$ is Boolean of rank $n+1$. We have checked it by GAP for $|G:H|<32$. If this lower bound is correct, then it is optimal because it is realized by $[1 \times S_2^n , S_2 \times S_3^n]$.  See also Example \ref{exBN}.

\begin{lemma} \label{grpcp}
Let $[H,G]$ be a Boolean interval of finite groups, let $K \in [H,G]$ and $K^{\complement}$ be its lattice-complement. The following are equivalent:
\begin{itemize}
\item[(1)] $KK^{\complement} = K^{\complement}K$,
\item[(2)] $KK^{\complement} = G$,
\item[(3)] $|G:K| = |K^{\complement}:H|$.
\end{itemize}
\end{lemma}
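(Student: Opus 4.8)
The plan is to close the cycle $(1)\Rightarrow(2)\Rightarrow(1)$ and then establish $(2)\Leftrightarrow(3)$ separately, using two standard facts about subgroups of a finite group: the product-set cardinality formula $|AB| = |A|\,|B|/|A\cap B|$ (which holds whether or not $AB$ is a subgroup, being a pure coset-counting statement), and the classical criterion that the product set $AB$ of two subgroups is itself a subgroup if and only if $AB = BA$. The Boolean hypothesis enters only through the dictionary between lattice and group operations: since $K^{\complement}$ is the lattice-complement of $K$ in $[H,G]$, the join $K \vee K^{\complement}$ is $\langle K, K^{\complement}\rangle = G$ and the meet $K \wedge K^{\complement}$ is the honest intersection $K \cap K^{\complement} = H$.

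For $(1)\Rightarrow(2)$, I would argue that the assumption $KK^{\complement} = K^{\complement}K$ makes the product set $KK^{\complement}$ a subgroup of $G$. Since $1$ lies in both $K$ and $K^{\complement}$, this subgroup contains both of them, hence contains their join $\langle K, K^{\complement}\rangle = G$; as $KK^{\complement}\subseteq G$ is automatic, we get $KK^{\complement} = G$. The converse $(2)\Rightarrow(1)$ is immediate by taking inverses: because $K$ and $K^{\complement}$ are subgroups, $(KK^{\complement})^{-1} = (K^{\complement})^{-1}K^{-1} = K^{\complement}K$, so $KK^{\complement} = G$ forces $K^{\complement}K = G = KK^{\complement}$.

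To handle $(2)\Leftrightarrow(3)$, I would apply the cardinality formula with $K\cap K^{\complement} = H$, giving $|KK^{\complement}| = |K|\,|K^{\complement}|/|H|$. Since $KK^{\complement}\subseteq G$, the set equality $KK^{\complement} = G$ holds precisely when $|KK^{\complement}| = |G|$, that is, when $|K|\,|K^{\complement}| = |G|\,|H|$; dividing through by $|K|\,|H|$ rewrites this as $|G:K| = |K^{\complement}:H|$, which is exactly $(3)$.

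I do not expect a genuine obstacle here, as the lemma is essentially a repackaging of the product formula together with the lattice-to-group translation. The only points deserving care are to invoke the product-set cardinality formula \emph{before} knowing that $KK^{\complement}$ is a subgroup (it is valid unconditionally), and to record explicitly that the Boolean meet $K\wedge K^{\complement}$ coincides with the set-theoretic intersection $K\cap K^{\complement} = H$, since it is this intersection, not an abstract infimum, that feeds into the counting argument.
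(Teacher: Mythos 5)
Your proof is correct and takes essentially the same approach as the paper: the equivalence $(2)\Leftrightarrow(3)$ is handled by the identical product-formula counting argument, and your $(1)\Leftrightarrow(2)$ differs only cosmetically, since you invoke the standard criterion that a product of two subgroups is a subgroup iff they permute (plus the inverse trick for the converse), whereas the paper directly collapses words of $K \vee K^{\complement}$ into the form $kk'$ using permutability --- the same underlying computation. Both arguments rest on the same lattice-to-group dictionary $K \vee K^{\complement} = \langle K, K^{\complement}\rangle = G$ and $K \wedge K^{\complement} = K \cap K^{\complement} = H$.
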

\begin{proof}
$(1) \Rightarrow (2)$: We have $K \vee K^{\complement} = G$ and $K \wedge K^{\complement} = H$. But any element in $K \vee K^{\complement}$ is of the form $k_1k'_1k_2k'_2 \cdots k_rk'_r $ with $k_i \in K$ and $k'_i \in K^{\complement}$. So by $(1)$ any such element is of the form $kk'$ with $k \in K$ and $k' \in K^{\complement}$, i.e. $G=KK^{\complement}$. \\ 
$(1) \Leftarrow (2)$: Immediate. \\ 
$(2) \Leftrightarrow (3)$:  By the product formula, $|KK^{\complement}| = |K||K^{\complement}|/|H|$, so $KK^{\complement} = G$ if and only if $|KK^{\complement}| = |G| $, if and only if $|G:K|= |G|/|K| = |K^{\complement}|/|H| = |K^{\complement}:H|$. 
\end{proof}

\begin{definition} \label{grpcpdef}
A Boolean interval $[H,G]$ is called \emph{group-complemented} if every $K \in [H,G]$ satisfies one of the equivalent statements of Lemma \ref{grpcp}.
\end{definition}

\begin{remark}
There are Boolean intervals of finite groups which are not group-complemented, the smallest example is $[D_8,A_2(2)]$ of index $21$ and rank $2$. It is not group-complemented, because by GAP there is $K \in [D_8,A_2(2)]$ with $$|D_8:K| = 7 \neq 3 = |K^{\complement}:A_2(2)|.$$   
For groups with $|G:H|<32$, there is only one other example, given by $[S_3,A_2(2)]$.
\end{remark}

\begin{lemma}  \label{grpcplem}
If $[H,G]$ is Boolean group-complemented, then $\hat{\varphi}(H,G) = \varphi(H,G)$. 
\end{lemma}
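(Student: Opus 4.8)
The plan is to exploit the fact that in a Boolean lattice the complementation map $K \mapsto K^{\complement}$ is an order-reversing involution, and to combine this with the index identity supplied by the group-complemented hypothesis. Starting from
$$\hat{\varphi}(H,G) = \sum_{K \in [H,G]} \mu(H,K)\,|G:K|,$$
I aim to transform each summand $\mu(H,K)\,|G:K|$ into $\mu(L,G)\,|L:H|$ under the reindexing $L = K^{\complement}$, so that the sum collapses to $\varphi(H,G) = \sum_{L}\mu(L,G)\,|L:H|$, the Möbius-inversion formula recorded after Definition \ref{euler}.

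First I would record the two ingredients. Since $[H,G] \cong \mathcal{B}_n$, complementation is a bijection of $[H,G]$ onto itself with $(K^{\complement})^{\complement}=K$, and it restricts to a poset anti-isomorphism $[H,K] \to [K^{\complement},G]$; because the Möbius function depends only on the unoriented interval, this yields $\mu(H,K) = \mu(K^{\complement},G)$ for every $K$. Second, the group-complemented hypothesis gives, via statement (3) of Lemma \ref{grpcp}, the index identity $|G:K| = |K^{\complement}:H|$ for every $K \in [H,G]$.

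Then I would substitute both identities into the expression for $\hat{\varphi}$ and reindex by the complement bijection $L = K^{\complement}$ of $[H,G]$, obtaining
$$\hat{\varphi}(H,G) = \sum_{K \in [H,G]} \mu(K^{\complement},G)\,|K^{\complement}:H| = \sum_{L \in [H,G]} \mu(L,G)\,|L:H| = \varphi(H,G),$$
which is exactly the claimed equality.

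The only genuinely nontrivial point is justifying $\mu(H,K) = \mu(K^{\complement},G)$, i.e. that complementation carries $[H,K]$ onto $[K^{\complement},G]$ while preserving Möbius values. I expect to dispatch this most cleanly via the explicit formula $\mu(H,K)=(-1)^{\mathrm{rk}(K)}$ valid in a Boolean lattice, together with $\mathrm{rk}(K^{\complement}) = n - \mathrm{rk}(K)$, which gives $\mu(K^{\complement},G) = (-1)^{\,n-\mathrm{rk}(K^{\complement})} = (-1)^{\mathrm{rk}(K)}$; matching the index identity to the correct $K$ is then routine bookkeeping. Alternatively one may invoke the general dual-invariance of the Möbius invariant (in the spirit of Remark \ref{dualrem}), but the explicit Boolean computation is self-contained and requires no extra machinery.
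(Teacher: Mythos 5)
Your proof is correct and takes essentially the same route as the paper's: both rest on the M\"obius symmetry $\mu(H,K)=\mu(K^{\complement},G)$ of the Boolean lattice, the index identity $|G:K|=|K^{\complement}:H|$ from Lemma \ref{grpcp}(3), and a reindexing by the complement bijection. The only cosmetic differences are that the paper runs the computation starting from $\varphi(H,G)$ rather than $\hat{\varphi}(H,G)$, and that you spell out the rank computation behind the M\"obius identity, which the paper simply asserts.
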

\begin{proof} 
We have 
$$\varphi(H,G)= \sum_{K \in [H,G]} \mu(K,G) |K:H|, $$
but $\mu(K,G) = \mu(H,K^{\complement})$; moreover, by the group-complemented assumption and Lemma \ref{grpcp}, $|K:H| = |G:K^{\complement}|$, so 
$$\varphi(H,G)= \sum_{K \in [H,G]} \mu(H,K^{\complement}) |G:K^{\complement}|. $$  The result follows by the change of variable $K \leftrightarrow K^{\complement}$. 
\end{proof} 

\begin{corollary}  \label{cplchi}
Let $[H,G]$ be a Boolean interval. If it is group-complemented, then $\hat{\varphi}(H,G) > 0$. 
\end{corollary}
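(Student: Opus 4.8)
The plan is to combine the two results immediately preceding the statement, since the corollary is essentially nothing more than their conjunction. First I would invoke Proposition \ref{chipos}, which asserts that any Boolean interval $[H,G]$ has $\varphi(H,G) > 0$. This positivity is itself a consequence of Theorem \ref{ore2}: since $[H,G]$ is Boolean it is in particular top Boolean, hence w-cyclic, so there exists at least one coset $Hg$ with $\langle Hg \rangle = G$, and $\varphi(H,G)$ counts exactly such cosets.

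Second, under the additional group-complemented hypothesis, Lemma \ref{grpcplem} supplies the identity $\hat{\varphi}(H,G) = \varphi(H,G)$. Chaining the two facts together yields
$$\hat{\varphi}(H,G) = \varphi(H,G) > 0,$$
which is precisely the assertion. So the entire argument is a two-line deduction once the group-complemented hypothesis is in force.

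There is no genuine obstacle here, as all the substantive work has already been done: the existence of a generating coset behind Proposition \ref{chipos}, and the complement-symmetry of the M\"obius/index sum behind Lemma \ref{grpcplem} (which rests on the equality $|K:H| = |G:K^{\complement}|$ that Lemma \ref{grpcp} guarantees in the group-complemented case). The only point worth flagging is that the group-complemented assumption enters solely through Lemma \ref{grpcplem}; without it one still has $\varphi(H,G) > 0$, but there is no mechanism to transfer that positivity to $\hat{\varphi}(H,G)$, and indeed the example $[D_8,A_2(2)]$ shows $\varphi \neq \hat{\varphi}$ in general. I would therefore simply write out the two-step chain and remark that the hypothesis is used only via Lemma \ref{grpcplem}.
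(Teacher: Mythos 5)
Your proof is correct and follows exactly the paper's own argument: Proposition \ref{chipos} gives $\varphi(H,G) > 0$ for Boolean intervals, and Lemma \ref{grpcplem} transfers this to $\hat{\varphi}(H,G)$ under the group-complemented hypothesis. The additional remarks on where the hypothesis enters and the counterexample $[D_8,A_2(2)]$ are accurate and consistent with the paper's Remark \ref{cplconv}.
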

\begin{proof} 
By Lemma \ref{grpcplem} and Proposition \ref{chipos}.
\end{proof}

\begin{remark} \label{cplconv}
The converse of Corollary \ref{cplchi} is false as $[D_8 , A_2(2)]$ is Boolean and not group-complemented with $\hat{\varphi} = 8 >0$.
\end{remark} 

\begin{corollary}  \label{corocpl}
If a Boolean interval of finite groups $[H,G]$ is group-complemented, then it is linearly primitive. 
\end{corollary}
\begin{proof} 
By Corollary \ref{cplchi}, $\hat{\varphi}(H,G) \neq 0$, so we apply Theorem \ref{main}.   
\end{proof} 

We say that an interval $[H,G]$ is \emph{Dedekind} if every $K \in [H,G]$ and every $g \in G$ satisfy  $HgK = KgH$. The case $H=1$ corresponds to the usual notion of Dedekind group.

\begin{lemma} \label{dedecomp} For a Boolean interval of finite groups,  Dedekind  implies group-complemented.
\end{lemma}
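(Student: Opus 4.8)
The plan is to reduce the statement, via Lemma \ref{grpcp}, to showing that for every $K \in [H,G]$ its lattice-complement $K^{\complement}$ permutes with it, i.e. $KK^{\complement} = K^{\complement}K$ (condition (1) of that lemma). The Boolean hypothesis will enter only to guarantee that the complement $K^{\complement}$ exists; the real content is extracting the permutability of $K$ and $K^{\complement}$ from the Dedekind identity $HgK = KgH$.

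First I would fix $K \in [H,G]$ and apply the Dedekind hypothesis to the subgroup $K$ and to each element $g \in K^{\complement}$ (legitimate since $K^{\complement} \subseteq G$), obtaining the family of set-equalities $HgK = KgH$, one for each such $g$. Then I would take the union of both sides over all $g \in K^{\complement}$. The key observation is that, because $H \subseteq K^{\complement}$, the cosets $Hg$ (and likewise $gH$) with $g$ ranging over $K^{\complement}$ exhaust $K^{\complement}$; hence, using the distributivity of set-products over unions,
$$\bigcup_{g \in K^{\complement}} HgK = \Big(\bigcup_{g \in K^{\complement}} Hg\Big) K = K^{\complement}K, \qquad \bigcup_{g \in K^{\complement}} KgH = K\Big(\bigcup_{g \in K^{\complement}} gH\Big) = KK^{\complement}.$$
Since the two unions agree term by term, I conclude $K^{\complement}K = KK^{\complement}$, which is exactly condition (1) of Lemma \ref{grpcp}. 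As $K$ was arbitrary, $[H,G]$ is group-complemented.

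I expect the main (and essentially only) subtlety to be verifying the two collapsing identities $\bigcup_{g \in K^{\complement}} Hg = K^{\complement} = \bigcup_{g \in K^{\complement}} gH$ together with the interchange of union and set-product; these are routine once one notes that $H \subseteq K^{\complement}$ makes $hg$ (resp. $gh$) range over all of $K^{\complement}$ as $g$ does. It is worth remarking that the argument never uses the Boolean structure beyond the existence of $K^{\complement}$: the same computation shows that in \emph{any} Dedekind interval, any two members $K_1, K_2 \in [H,G]$ satisfy $K_1 K_2 = K_2 K_1$. So, in contrast with the induction on rank used for Theorem \ref{ore2}, no induction is needed here, and the only real pitfall would be to over-complicate matters rather than spot the direct union manipulation.
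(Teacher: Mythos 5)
Your proof is correct and is essentially the paper's own argument written out in more detail: the paper likewise applies the Dedekind identity with $g$ ranging over $K^{\complement}$ (stated compactly as $HK^{\complement}K = KK^{\complement}H$), collapses $HK^{\complement} = K^{\complement}H = K^{\complement}$, and concludes $KK^{\complement} = K^{\complement}K$, i.e.\ condition (1) of Lemma \ref{grpcp}. Your closing observation that permutability of any two members holds in an arbitrary Dedekind interval is also accurate, though note that passing from condition (1) to group-complementedness still uses the lattice-complement relations $K \vee K^{\complement} = G$ and $K \wedge K^{\complement} = H$ supplied by the Boolean hypothesis.
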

\begin{proof}
By assumption, for all $K \in [H,G]$ and for all $g \in G$, we have $HgK = KgH$. It follows that  $HK^{\complement}K = KK^{\complement}H$, but $HK^{\complement} = K^{\complement}H = K^{\complement}$, so $KK^{\complement} = K^{\complement}K$.
\end{proof}

\begin{remark} \label{compnotdede}
The converse of Lemma \ref{dedecomp} is not true. There are Boolean group-complemented intervals which are not Dedekind, for example $[H,G]$ as follows:
\begin{verbatim}
gap> G:=TransitiveGroup(d,r);  H:=Stabilizer(G,1);  
\end{verbatim} 
with $(d,r) = (10,4)$ or $(30,7)$.
\end{remark} 

By this remark, Corollary \ref{corocpl} is strictly stronger than a result in a previous paper (where \emph{group-complemented} is replaced by \emph{Dedekind}  \cite[Remark 6.17]{pa}). Moreover, by Remark \ref{cplconv}, Theorem \ref{main} is strictly stronger than Corollary \ref{corocpl}.

\subsection{Applications to representation theory} 
We get a criterion for a finite group to be linearly primitive. We also deduce a non-trivial upper bound for the minimal number of irreducible components for a faithful complex representation.

\begin{lemma} \label{bottomprim}
An interval $[H,G]$ is linearly primitive if its bottom interval is so.
\end{lemma}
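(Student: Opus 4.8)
The plan is to translate \emph{linear primitivity} into a condition on fixed-point subspaces indexed by the atoms of the interval, and then to transport a witnessing representation from $B$ to $G$ by induction, where $[H,B]$ is the bottom interval, so that $B$ is the join of all atoms of $[H,G]$.

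First I would record a fixed-point reformulation of linear primitivity. For any finite-dimensional representation $V$ of $G$ and any atom $A$ of $[H,G]$, parts (1), (5) and (6) of Lemma \ref{tech} give that $A \subseteq G_{(V^H)}$ if and only if $V^A = V^H$: the forward direction combines $V^{G_{(V^H)}} \subseteq V^A$ from (1) with $V^{G_{(V^H)}} = V^H$ from (5), and the reverse is immediate since $A$ fixes $V^A$ pointwise. Since $H \subseteq G_{(V^H)}$ by (4), and since in the finite subgroup lattice every subgroup strictly containing $H$ contains some atom of $[H,G]$ (pass to the finite subinterval $[H,K]$ and take a minimal element above $H$), it follows that $G_{(V^H)} = H$ \emph{if and only if} $V^A \subsetneq V^H$ for every atom $A$ of $[H,G]$. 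Thus linear primitivity of $[H,G]$ amounts to the existence of an irreducible $V$ separating $H$ from each of its atoms by a strict inclusion of fixed spaces.

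Next I would observe that the atoms of $[H,G]$ and of $[H,B]$ coincide: each atom $A$ lies below $B=\bigvee_A A$, and conversely any subgroup covering $H$ inside $B$ still covers $H$ in $G$, since every subgroup between $H$ and such a cover already lies in $B$. Hence the hypothesis that $[H,B]$ is linearly primitive furnishes an irreducible representation $W$ of $B$ with $W^A \subsetneq W^H$ for every atom $A$. Now comes the transport step: let $V$ be any irreducible constituent of $\mathrm{Ind}_B^G W$, which is nonzero since $W \neq 0$. By Frobenius reciprocity $\mathrm{Hom}_B(\mathrm{Res}_B V, W) \neq 0$, so by complete reducibility over $\mathbb{C}$ we may write $\mathrm{Res}_B V \cong W \oplus U$ for some representation $U$ of $B$. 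For an atom $A \subseteq B$, taking fixed points is additive over this direct sum, so $V^A = W^A \oplus U^A$ and $V^H = W^H \oplus U^H$; as $H \subseteq A$ we have $U^A \subseteq U^H$, while $W^A \subsetneq W^H$ is strict. Comparing dimensions gives $\dim V^A < \dim V^H$, and since $V^A \subseteq V^H$ this is a proper inclusion. Therefore $V^A \subsetneq V^H$ for every atom $A$, whence $G_{(V^H)} = H$ by the reformulation, and $[H,G]$ is linearly primitive.

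The point requiring the most care is ensuring that the strict inequality survives the passage from $W$ to $V$: a priori the extra summand $U$ could enlarge the fixed spaces, but because fixed points of a larger subgroup are never larger (so $U^A \subseteq U^H$), the strictness contributed by $W$ cannot be cancelled. I would double-check only the two supporting lattice and representation-theoretic facts used above, namely that every $K$ with $H \subsetneq K$ contains an atom of $[H,G]$, and that an irreducible constituent $V$ of $\mathrm{Ind}_B^G W$ indeed contains $W$ upon restriction to $B$; both are routine.
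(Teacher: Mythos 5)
Your proof is correct, and its overall skeleton matches the paper's: take the witnessing irreducible representation $W$ of the bottom group $B$, pass to an irreducible representation $V$ of $G$ whose restriction to $B$ contains $W$, show $V^A \subsetneq V^H$ for every atom $A$, and conclude $G_{(V^H)}=H$ by minimality. Where you differ is in the two verification steps. First, the paper simply asserts the existence of $V$ ("let $V$ be an irreducible complex representation of $G$ such that its restriction admits $W$ as subrepresentation"), while you justify it via $\mathrm{Ind}_B^G W$ and Frobenius reciprocity --- a worthwhile addition, since this existence is the one genuinely representation-theoretic input. Second, for the strictness at atoms, the paper argues with pointwise stabilizers inside $B$: from $W^H \subseteq V^H$ it gets $H \subseteq B_{(V^H)} \subseteq B_{(W^H)} = H$, and then an atom $X$ with $V^X = V^H$ would give $X \subseteq B_{(V^X)} = B_{(V^H)} = H$, a contradiction; you instead split $\mathrm{Res}_B V \cong W \oplus U$ by complete reducibility and count dimensions, using $U^A \subseteq U^H$ and $W^A \subsetneq W^H$ to force $\dim V^A < \dim V^H$. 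Both arguments are sound; yours is more self-contained (it never needs the stabilizer calculus of parts (2), (4), (5) of Lemma \ref{tech} beyond the atom reformulation), while the paper's is shorter given that Lemma \ref{tech} is already in place. One small point in your favor: you explicitly check that the atoms of $[H,G]$ and of $[H,B]$ coincide, a fact the paper uses silently when it applies $X \subseteq B_{(V^X)}$ to an atom $X$ of $[H,G]$.
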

\begin{proof}
Let $[H,K]$ be the bottom interval of $[H,G]$. By assumption, there is an irreducible complex representation $W$ of $K$ such that $K_{(W^H)} = H$. Let $V$ be an irreducible complex representation of $G$ such that its restriction on $K$ admits $W$ as subrepresentation. Now $W \subseteq V$, so that $W^H \subseteq V^H$ and hence
$$H \subseteq K_{(V^H)} \subseteq  K_{(W^H)} = H.$$ It follows that $K_{(V^H)}=H$.  If there is an atom $X \in [H,G]$ with $V^H = V^{X}$, then $$ X \subseteq  K_{(V^{X})}  = K_{V^H},$$ contradicting $H \subsetneq X$. So for every atom $X$, $V^{X} \subsetneq V^H$. Then by part (6) of Lemma \ref{tech}, $X \not \subseteq G_{(V^H)}$, so by minimality $G_{(V^H)} = H$.   
\end{proof}  
We can then provide a dual version of Theorem \ref{ore2}.
\begin{corollary}  \label{coromain}
A bottom Boolean interval $[H,G]$ with $\hat{\varphi}(H,G) \neq 0$ is linearly primitive. 
\end{corollary}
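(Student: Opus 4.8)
The plan is to reduce to the genuinely Boolean case treated in Theorem \ref{main} by passing to the bottom interval, and then to propagate linear primitivity upward using Lemma \ref{bottomprim}. Write $[H,B]$ for the bottom interval of $[H,G]$, where $B$ is the join of all atoms of $[H,G]$; by the bottom Boolean hypothesis this interval is Boolean.

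First I would invoke the factorization established in the introduction, namely $\hat{\varphi}(H,G) = |G:B| \cdot \hat{\varphi}(H,B)$, which follows by applying the crosscut theorem to the dual poset (so that $\mu(H,L)=0$ for all $L \in [H,G] \setminus [H,B]$). Since $|G:B|$ is a positive integer and $\hat{\varphi}(H,G) \neq 0$ by hypothesis, this immediately forces $\hat{\varphi}(H,B) \neq 0$.

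Next, because $[H,B]$ is a Boolean interval with nonzero dual Euler totient, Theorem \ref{main} applies and shows that $[H,B]$ is linearly primitive: there is an irreducible complex representation $W$ of $B$ with $B_{(W^H)} = H$. Finally, Lemma \ref{bottomprim} states precisely that an interval is linearly primitive once its bottom interval is, so I would conclude that $[H,G]$ itself is linearly primitive.

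The argument is essentially a synthesis of three earlier results, so there is no real obstacle once the factorization $\hat{\varphi}(H,G) = |G:B| \cdot \hat{\varphi}(H,B)$ is in hand; the only point requiring attention is to confirm that the hypotheses of Theorem \ref{main} (Booleanness of the bottom interval and non-vanishing of its dual Euler totient) are genuinely inherited from those of the corollary, which the factorization guarantees.
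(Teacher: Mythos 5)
Your proposal is correct and follows exactly the same route as the paper's own proof: factor $\hat{\varphi}(H,G) = |G:B|\cdot\hat{\varphi}(H,B)$ to get $\hat{\varphi}(H,B)\neq 0$, apply Theorem \ref{main} to the Boolean interval $[H,B]$, and then lift linear primitivity to $[H,G]$ via Lemma \ref{bottomprim}. The paper states this more tersely, but the logical content is identical.
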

\begin{proof} Let $[H,B]$ be the bottom interval of $[H,G]$. Then $\hat{\varphi}(H,G) = |G:B|\hat{\varphi}(H,B)$. So the result follows by Lemma \ref{bottomprim} and  Theorem \ref{main}.
\end{proof} 
Let $G$ be a group. A proper subgroup $H$ is called \emph{core-free} if it does not contain any non-trivial normal subgroup of $G$. The following result is \emph{almost} a combinatorial criterion for a finite group to be linearly primitive.
\begin{theorem}  \label{linprimG}
A finite group $G$, with a core-free subgroup $H$ such that $[H,G]$ is bottom Boolean with a nonzero dual Euler totient, is linearly primitive.
\end{theorem}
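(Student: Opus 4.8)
The plan is to reduce the statement to the interval-theoretic form of linear primitivity already established, and then upgrade it to faithfulness of a representation of $G$ using the core-free hypothesis. Everything difficult has in fact already been done; what remains is a short bridge between the two meanings of ``linearly primitive'' (for an interval versus for a group).

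First I would invoke Corollary \ref{coromain}: since $[H,G]$ is bottom Boolean with $\hat{\varphi}(H,G) \neq 0$, the interval $[H,G]$ is linearly primitive. By definition this produces an irreducible complex representation $V$ of $G$ with $G_{(V^H)} = H$. The goal is then to show that this same $V$ is faithful as a representation of $G$, which is exactly the assertion that the group $G$ is linearly primitive.

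The key observation is that the kernel of $V$ is precisely the pointwise stabilizer $G_{(V)}$ of the whole space. Since $V^H \subseteq V$, part (2) of Lemma \ref{tech} gives $G_{(V)} \subseteq G_{(V^H)} = H$. Now $G_{(V)} = \ker V$ is a normal subgroup of $G$ contained in $H$; because $H$ is core-free, i.e. it contains no nontrivial normal subgroup of $G$, we conclude $G_{(V)} = 1$. Hence $V$ is a faithful irreducible complex representation of $G$, so $G$ is linearly primitive.

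There is essentially no obstacle beyond the already-proved Corollary \ref{coromain}: the only content to verify is that the interval condition $G_{(V^H)} = H$ forces faithfulness of $V$, and this is immediate once one notices that core-freeness makes the normal subgroup $\ker V = G_{(V)} \subseteq H$ trivial. The qualifier ``almost'' in the surrounding discussion reflects that the resulting criterion is combinatorial only modulo the nonvanishing of $\hat{\varphi}(H,G)$, which is the content of Conjecture \ref{hatchi}.
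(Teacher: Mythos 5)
Your proposal is correct and matches the paper's own proof essentially step for step: both invoke Corollary \ref{coromain} to get an irreducible $V$ with $G_{(V^H)}=H$, then use $V^H \subseteq V$ (Lemma \ref{tech}, part (2)) to place $\ker(\pi_V)=G_{(V)}$ inside $H$, and conclude faithfulness from core-freeness since the kernel is normal in $G$. Nothing to add or correct.
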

\begin{proof} 
By Corollary \ref{coromain}, the interval $[H,G]$ is linearly primitive.  So there is an irreducible complex representation $V$ with $G_{(V^{H})} = H$. Now, $V^H \subseteq V$ so $G_{(V)} \subseteq G_{(V^H)}$, but $\ker(\pi_V) =  G_{(V)}$, it follows that $\ker(\pi_V) \subseteq H$; but $H$ is a core-free subgroup of $G$, and $\ker(\pi_V)$ a normal subgroup of $G$, so $\ker(\pi_V)= 1$, which means that $V$ is faithful on $G$, i.e. $G$ is linearly primitive.
\end{proof} 

As an easy consequence of Theorem \ref{linprimG}, we have:  if a finite group $G$ admits a core-free maximal subgroup, then it is linearly primitive.
%\begin{proof}
%Let $M$ be a maximal subgroup of $G$, then $[M,G]$ is Boolean (so equal to its bottom interval) of rank $1$ (so $\hat{\varphi}(M,G) \neq 0$). Hence, by Theorem \ref{linprimG}, if $M$ is core-free, then $G$ is linearly primitive.
%\end{proof}

\begin{question} \label{coco}
Is a finite group $G$ linearly primitive if and only if there is a core-free subgroup $H$ with $[H,G]$ bottom boolean?
\end{question}
We have checked it by GAP for any finite $p$-group $G$ of order less than $3^7$ (except $2^9, 2^{10}, 2^{11}$). Now, by \cite[Problem 5.25]{isa}, if every Sylow subgroup of a finite group $G$ is linearly primitive, then so is $G$. The following is a reformulation of Theorem \ref{linprimG} for $p$-groups.

\begin{corollary} \label{coqua} Let $G$ be a finite $p$-group with a core-free subgroup $H$ such that $N_G(H)/H$ is cyclic or generalized quaternion. Then $G$ is linearly primitive. 
\end{corollary}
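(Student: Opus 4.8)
The plan is to reduce to Theorem \ref{linprimG} by exhibiting $[H,G]$ as a bottom Boolean interval (in fact of rank $1$) with nonzero dual Euler totient. The crucial input is a feature special to $p$-groups: \emph{every atom of $[H,G]$ lies in the normalizer $N_G(H)$}. Indeed, if $H \lessdot K$ is a cover relation, then $H$ is a maximal subgroup of the $p$-group $K$, hence of index $p$ and normal in $K$, so that $K \subseteq N_G(H)$. Since $N_G(H)$ is a subgroup, the join $B$ of all atoms of $[H,G]$ also lies in $N_G(H)$. Moreover a subgroup lying between $H$ and some $K \subseteq N_G(H)$ is automatically contained in $N_G(H)$, so the cover relations above $H$ are the same in $\mathcal{L}(G)$ and in $\mathcal{L}(N_G(H))$; hence the bottom interval of $[H,G]$ coincides with the bottom interval of $[H,N_G(H)]$. (As $H$ is proper and $G$ is nilpotent, $N_G(H) \supsetneq H$, so atoms do exist.)

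Since $H \trianglelefteq N_G(H)$, the interval $[H,N_G(H)]$ is isomorphic to the subgroup lattice $\mathcal{L}(N_G(H)/H)$, under which atoms correspond to the minimal (order-$p$) subgroups of $N_G(H)/H$. Now a cyclic $p$-group has a unique subgroup of order $p$, and a generalized quaternion group has a unique involution and hence a unique subgroup of order $2$; so under either hypothesis $N_G(H)/H$ has exactly one minimal subgroup. Therefore $[H,G]$ has a unique atom $B$ with $|B:H|=p$, its bottom interval is $[H,B]\cong\mathcal{B}_1$, and $[H,G]$ is bottom Boolean.

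It remains to check that $\hat{\varphi}(H,G)\neq 0$. By the crosscut reduction on the dual poset (giving $\hat{\varphi}(H,G)=|G:B|\cdot\hat{\varphi}(H,B)$), it suffices to evaluate $\hat{\varphi}$ on the rank-$1$ interval $[H,B]$, where a direct computation gives
$$\hat{\varphi}(H,B)=\mu(H,H)\,|B:H|+\mu(H,B)\,|B:B| = p-1 > 0.$$
Hence $\hat{\varphi}(H,G)=|G:B|\,(p-1)\neq 0$. As $H$ is core-free and $[H,G]$ is bottom Boolean with nonzero dual Euler totient, Theorem \ref{linprimG} applies and $G$ is linearly primitive.

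The argument is short and each ingredient is elementary, so I do not expect a serious obstacle. The one step requiring care is the atom-in-normalizer lemma: it rests precisely on the $p$-group hypothesis (maximal subgroups of a $p$-group having prime index and being normal), and it is exactly what makes the bottom interval controllable via the quotient $N_G(H)/H$; this reduction fails for general finite groups, which is why the statement is specific to $p$-groups.
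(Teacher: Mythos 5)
Your proof is correct and follows essentially the same route as the paper's: reduce to the unique minimal subgroup of $N_G(H)/H$ via the fact that maximal subgroups of $p$-groups are normal of index $p$ (so all atoms, and hence their join $B$, lie in $N_G(H)$), conclude the bottom interval is $[H,B]\cong\mathcal{B}_1$, compute $\hat{\varphi}(H,G)=|G:B|(|B:H|-1)\neq 0$, and invoke Theorem \ref{linprimG}. Your write-up is somewhat more explicit than the paper's (e.g., spelling out why the bottom intervals of $[H,G]$ and $[H,N_G(H)]$ coincide, and why atoms exist), but there is no substantive difference.
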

\begin{proof}
A cyclic or generalized quaternion $p$-group has a unique subgroup of order $p$. So $[H,N_G(H)]$ has a unique atom $B'$. Since a maximal subgroup of a $p$-group is normal \cite[Corollary 1 p137]{zas}, $H$ is normal in any atom of $[H,G]$, so is in $B$ (the join of all the atoms). It follows that $B \subseteq N_G(H)$ the normalizer of $H$ in $G$, and $B=B'$. It follows that the bottom interval of $[H,G]$ is Boolean of rank $1$, thus $$\hat{\varphi}(H,G) = |G:B|\hat{\varphi}(H,B) = |G:B|(|B:H| - 1) \neq 0,$$ so by Theorem \ref{linprimG}, $G$ is linearly primitive.
\end{proof}

Note that any linearly primitive group has a cyclic center, and the two conditions are equivalent for $p$-groups \cite[Theorem 2.32]{isa}.   
%We consider the following converse to Theorem \ref{linprimG}.

%A positive answer to the above question leads to the following:
%\begin{statement} A finite $p$-group $G$ with a cyclic center has a core-free subgroup $H$ with $N_G(H)/H$ cyclic or generalized quaternion. 
%\end{statement}
%\begin{proof}
%Let $G$ be a finite $p$-group with a cyclic center. Then $G$ is linearly primitive, so there is a core-free subgroup $H$ such that the bottom interval $[H,B]$ of $[H,G]$ is Boolean. Then $B \subseteq N_G(H)$, as for Corollary \ref{coqua}. But $[H,B] \simeq [1,B/H]$ as lattices, so the subgroup lattice $\mathcal{L}(B/H)$ is Boolean, and by Ore's Theorem \ref{ore1}, $B/H$ is cyclic. Now it is also a $p$-group, so $B/H \simeq C_p^m$. But $\mathcal{L}(B/H)$ is Boolean, so $m=1$. It follows that $B$ is the unique atom of $[H,G]$, and $B/H$ is the unique atom of $\mathcal{L}(N_G(H)/H)$. So $N_G(H)/H$ is a $p$-group with a unique subgroup of order $p$, hence it is cyclic or generalized quaternion \cite[Theorem 15]{zas}.\end{proof}

%\begin{remark} By using the proof of Corollary \ref{coqua}, the above statement is in fact equivalent to a positive answer of Question \ref{coco} for any $p$-group $G$, which then is true if $|G|<512$, by GAP computation. Moreover, it is  true for $|G|<2187$, if $p>2$.
%\end{remark}

\begin{definition} \label{defbbelint} \label{bel}
Let $[H,G]$ be an interval of finite groups. Let $\lambda(H,G)$ be the minimal length for an ordered chain of subgroups $$H=H_0 \subseteq  H_1 \subseteq  \dots \subseteq  H_{n} = G$$ such that the interval $[H_{i-1} , H_{i}]$ is bottom Boolean and $\hat{\varphi}(H_{i-1} , H_{i})$ is nonzero. For $H=1$, we denote it just by $\lambda(G)$. It is a purely combinatorial invariant of $G$.
\end{definition}

\begin{theorem}  \label{bbelint}
For any interval $[H,G]$, the minimal number of irreducible complex representations  $V_1, \dots , V_n$ of $G$ such that $\bigcap_{i} G_{(V^H_{i})} = H$ is at most $\lambda(H,G)$.
\end{theorem}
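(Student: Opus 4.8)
The plan is to exhibit an explicit family of exactly $\lambda(H,G)$ irreducible complex representations of $G$ whose fixed-point stabilizers intersect precisely in $H$; since the minimal number is by definition the least such count, this is all that is needed. Write $\ell = \lambda(H,G)$ and fix a chain $H = H_0 \subseteq H_1 \subseteq \cdots \subseteq H_\ell = G$ of minimal length realizing $\ell$, so that each interval $[H_{i-1},H_i]$ is bottom Boolean with $\hat{\varphi}(H_{i-1},H_i) \neq 0$. By Corollary \ref{coromain}, each such interval is linearly primitive, so for every $i$ there is an irreducible complex representation $W_i$ of $H_i$ with $(H_i)_{(W_i^{H_{i-1}})} = H_{i-1}$.

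Next, following the opening move of the proof of Lemma \ref{bottomprim}, I would promote each $W_i$ to an irreducible representation $V_i$ \emph{of the full group $G$} whose restriction to $H_i$ contains $W_i$ as a subrepresentation; such a $V_i$ exists by Frobenius reciprocity, this being the only non-combinatorial ingredient in the argument. Taking $H_{i-1}$-fixed points in $W_i \subseteq V_i|_{H_i}$ gives $W_i^{H_{i-1}} \subseteq V_i^{H_{i-1}}$. Applying parts (2) and (4) of Lemma \ref{tech} inside $H_i$ then yields $H_{i-1} \subseteq (H_i)_{(V_i^{H_{i-1}})} \subseteq (H_i)_{(W_i^{H_{i-1}})} = H_{i-1}$, that is, $G_{(V_i^{H_{i-1}})} \cap H_i = H_{i-1}$.

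The core step is to show $\bigcap_{i=1}^{\ell} G_{(V_i^H)} = H$. The inclusion $H \subseteq \bigcap_i G_{(V_i^H)}$ is immediate from part (4) of Lemma \ref{tech}. For the reverse, take $g$ in the intersection and run a downward induction along the chain. Trivially $g \in H_\ell = G$. If $g \in H_i$, then since $H \subseteq H_{i-1}$, parts (1) and (2) of Lemma \ref{tech} give $V_i^{H_{i-1}} \subseteq V_i^H$ and hence $G_{(V_i^H)} \subseteq G_{(V_i^{H_{i-1}})}$; thus $g \in G_{(V_i^{H_{i-1}})} \cap H_i = H_{i-1}$ by the previous paragraph. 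After $\ell$ descents $g \in H_0 = H$, so the family $V_1,\dots,V_\ell$ satisfies $\bigcap_i G_{(V_i^H)} = H$, which proves the bound.

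The step to watch carefully is the bookkeeping between the two distinct pointwise stabilizers, the ambient $G_{(V_i^H)}$ and the relative $(H_i)_{(V_i^{H_{i-1}})} = G_{(V_i^{H_{i-1}})} \cap H_i$, together with using the chain hypothesis in the right direction: because the intervals are only bottom Boolean (not Boolean), the correct tool is Corollary \ref{coromain} rather than Theorem \ref{main} directly. Once these identifications are set up, the descent is mechanical.
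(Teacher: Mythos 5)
Correct, and essentially the paper's own argument: fix a minimal chain realizing $\lambda(H,G)$, apply Corollary \ref{coromain} to each interval $[H_{i-1},H_i]$ to get irreducibles $W_i$ of $H_i$, lift each to an irreducible $V_i$ of $G$ whose restriction to $H_i$ contains $W_i$, and descend along the chain. Your only deviation is a beneficial one: the paper asserts the full equality $G_{(V_i^{H_{i-1}})} = {H_i}_{(W_i^{H_{i-1}})} = H_{i-1}$, which is stronger than what the lifting gives (it can fail when $H_i \neq G$, e.g. for $1 \subseteq C_2 \subseteq C_2 \times C_2$ with $V_i$ a character nontrivial on $H_i$ and trivial on the complement), whereas you prove and use exactly the relative identity $G_{(V_i^{H_{i-1}})} \cap H_i = H_{i-1}$, which is all the descent actually requires.
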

\begin{proof} 
We use the same notation for a chain of length $\lambda(H,G)$ as in Definition \ref{defbbelint}. By Corollary \ref{coromain}, for any $i$ there exists an irreducible complex representation $W_{i}$ of $H_{i}$ such that ${H_{i}}_{({W_{i}}^{H_{i-1}})} = H_{i-1}$. Let $V_{i}$ be an irreducible complex representation of $G$ such that its restriction to $H_{i}$ admits $W_{i}$ as subrepresentation. Then, we standardly check that ${H_{i}}_{({W_{i}}^{H_{i-1}})} = G_{(V_{i}^{H_{i-1}})}$. Thus, for $\ell = \lambda(H,G)$,  
$$\bigcap_{i=1}^{\ell} G_{(V^H_{i})} \le  \bigcap_{i=1}^{\ell} G_{(V_{i}^{H_{i-1}})} = (\bigcap_{i=1}^{\ell-1} G_{(V_{i}^{H_{i-1}})}) \cap H_{\ell - 1} $$
$$ \hspace*{2.07cm} = \bigcap_{i=1}^{\ell-1} {H_{\ell - 1}}_{(V_{i}^{H_{i-1}})} = \cdots = \bigcap_{i=1}^{\ell-s} {H_{\ell - s}}_{(V_{i}^{H_{i-1}})} = \cdots = H_0 = H.  \hspace*{2.07cm} \qedhere $$
\end{proof}

\begin{corollary}  \label{leftreg}
For any finite group $G$, the minimal number of irreducible components for a faithful complex representation of $G$ is bounded above by $\lambda(G)$.
\end{corollary}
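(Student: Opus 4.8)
The plan is to specialize Theorem \ref{bbelint} to the case $H = 1$. For the trivial subgroup, the fixed-point subspace $V^{1}$ is the whole representation space $V$, so the pointwise stabilizer $G_{(V_i^{1})} = G_{(V_i)}$ is nothing but the kernel $\ker(\pi_{V_i})$ of the representation (the identity $\ker(\pi_V) = G_{(V)}$ was already used in the proof of Theorem \ref{linprimG}). Hence the condition $\bigcap_i G_{(V_i^{1})} = 1$ appearing in Theorem \ref{bbelint} reads $\bigcap_i \ker(\pi_{V_i}) = 1$.

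Next I would observe that $\bigcap_i \ker(\pi_{V_i})$ is exactly the kernel of the direct sum $\bigoplus_i V_i$, so the condition $\bigcap_i \ker(\pi_{V_i}) = 1$ is equivalent to the faithfulness of $\bigoplus_i V_i$. Thus a collection $V_1, \dots, V_n$ of irreducible representations with trivial intersection of kernels is precisely a faithful complex representation $\bigoplus_i V_i$ with $n$ irreducible components. Since repeating an irreducible summand leaves the intersection of kernels unchanged, the minimal number of irreducible components of a faithful complex representation of $G$ equals the minimal $n$ for which such $V_1, \dots, V_n$ exist.

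Finally, Theorem \ref{bbelint} bounds this minimal $n$ by $\lambda(1, G)$, which by Definition \ref{defbbelint} is by convention $\lambda(G)$, yielding the claimed inequality. There is no genuine obstacle here: the corollary is a direct reinterpretation of the $H = 1$ case of Theorem \ref{bbelint}, and the only points requiring care are the identification $G_{(V^{1})} = \ker(\pi_V)$ for the trivial subgroup and the remark that allowing multiplicities among the irreducible summands does not change the minimal count of components needed for faithfulness.
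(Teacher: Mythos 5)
Your proposal is correct and follows essentially the same route as the paper: both specialize Theorem \ref{bbelint} to $H=1$, use the identification $G_{(V_i)} = \ker(\pi_i)$, and conclude that the trivial intersection of kernels makes $\bigoplus_i V_i$ faithful. Your extra remark that multiplicities do not affect the minimal count is a harmless refinement the paper leaves implicit.
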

\begin{proof}
By Theorem \ref{bbelint}, we can find irreducible complex representations $V_1, \dots, V_{\lambda(G)}$ satisfying $\bigcap_{i=1}^{\lambda(G)} G_{(V_{i})} = 1$, but $G_{(V_{i})} = \ker(\pi_{i})$. So $$ \ker(\bigoplus_{i} \pi_{i}) = \bigcap_{i} \ker(\pi_{i}) = 1,$$ which implies that $V_1 \oplus \cdots \oplus V_{\lambda(G)}$ is faithful.
\end{proof}

Note that Theorem \ref{linprimG} allows us to improve the bound given in the previous corollary, but it requires knowledge of all normal subgroups.

\section{Cohen-Macaulay coset poset} \label{CM}

\subsection{M\"obius invariant of a coset poset} \label{moback}

For a group $G$ and its subgroup $H$, the \emph{coset poset} $C(H,G)$ is defined to be the poset of (proper) right cosets $Kg$, with $g \in G$ and $K \in [H,G)$, ordered by inclusion, and
 \begin{center}
 $\hat{C}(H,G):=C(H,G) \sqcup \{\emptyset,G \} $.
 \end{center}
 
\begin{lemma}\label{posetisom}  
The poset $[H_1,H_2]$ is isomorphic to the poset $[H_1g,H_2g]$, for $g \in G$ and $H_1, H_2 \in [H,G]$.  
\end{lemma}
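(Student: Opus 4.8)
The plan is to produce the order isomorphism explicitly, as the restriction of right translation by $g$. Writing the subgroup-lattice interval $[H_1,H_2]$ as $[H_1e,H_2e]$ inside $\hat{C}(H,G)$, I would define
$$\Phi \colon [H_1,H_2] \to [H_1 g, H_2 g], \qquad \Phi(K) := Kg,$$
and show $\Phi$ is a bijection preserving and reflecting the order. The whole argument rests on one elementary fact about cosets, which I would establish first: for subgroups $K,K' \in [H,G]$ and elements $a,b \in G$, one has $Ka \subseteq K'b$ if and only if $K \subseteq K'$ and $K'a = K'b$. Indeed $a = ea \in Ka \subseteq K'b$ forces $K'a = K'b$, and then $ka \in K'a$ for every $k \in K$ forces $K \subseteq K'$; the converse is immediate.

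Granting this, the verification proceeds in three short steps. First, $\Phi$ is well defined: for $K \in [H_1,H_2]$ the criterion applied with $a=b=g$ gives $H_1 g \subseteq Kg \subseteq H_2 g$, so $Kg$ lies in the target interval. Second, $\Phi$ preserves and reflects order: for $K,K' \in [H_1,H_2]$, applying the criterion to $Kg$ and $K'g$ (where the auxiliary condition $K'g = K'g$ holds automatically) shows $Kg \subseteq K'g$ if and only if $K \subseteq K'$. Third, $\Phi$ is a bijection, with inverse given by right translation by $g^{-1}$, once I check that every element of $[H_1 g, H_2 g]$ is of the form $Kg$ with $K \in [H_1,H_2]$. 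This is exactly where the inclusion criterion does the real work: if $K'g'$ lies in $[H_1 g, H_2 g]$, then $H_1 g \subseteq K'g'$ forces $H_1 \subseteq K'$ together with $K'g' = K'g$, while $K'g' \subseteq H_2 g$ forces $K' \subseteq H_2$; hence $K'g' = K'g$ with $K' \in [H_1,H_2]$, and injectivity follows by cancelling $g$ on the right.

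There is no serious obstacle here; the content is entirely the bookkeeping of coset inclusions. The only point requiring a little care is the bijectivity step, where I must rule out ``unexpected'' cosets $K'g'$ with $g' \notin K'g$ slipping into the interval $[H_1 g, H_2 g]$ — and this is precisely what the condition $K'a = K'b$ in the inclusion criterion prevents. Once that criterion is in hand, each of the three steps reduces to a one-line check.
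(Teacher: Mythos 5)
Your proof is correct and takes essentially the same approach as the paper, whose entire proof is the one-line observation that right multiplication by $g$ is an order-preserving bijection between $[H_1,H_2]$ and $[H_1g,H_2g]$. Your inclusion criterion ($Ka \subseteq K'b$ iff $K \subseteq K'$ and $K'a = K'b$) simply spells out the details the paper leaves implicit, in particular the surjectivity step ruling out cosets $K'g'$ with $g' \notin K'g$ from the coset-poset interval.
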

\begin{proof}
The multiplication by $g$ is an order preserving bijection between the posets $[H_1,H_2]$ and $[H_1g,H_2g]$.
\end{proof} 

Let $P$ be a poset and $P^{\star}$ be the dual poset (order-reversed). Then $\mu(P^{\star}) = \mu(P)$. Therefore, for an interval of finite groups $[H,G]$, using Lemma \ref{posetisom}, we easily get the following relative version of Bouc's formula \cite[Section 3]{br},
$$ \mu(\hat{C}(H,G))= -\sum_{H \leq K \leq G} \mu(K,G) |G:K|.$$
\begin{lemma} \label{mobeul0}
If $[H,G]$ is Boolean of rank $n$, then $ \mu(\hat{C}(H,G)) =  -(-1)^n \hat{\varphi}(H,G). $ \end{lemma}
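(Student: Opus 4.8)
The plan is to start from the relative version of Bouc's formula stated immediately before the lemma, namely
$$\mu(\hat{C}(H,G)) = -\sum_{H \le K \le G} \mu(K,G)\,|G:K|,$$
and compare it term-by-term with the defining sum $\hat{\varphi}(H,G) = \sum_{K \in [H,G]} \mu(H,K)\,|G:K|$. Since both sums run over the same index set $[H,G]$ and carry the same factor $|G:K|$, the whole statement reduces to establishing a pointwise relation between the two M\"obius values $\mu(K,G)$ and $\mu(H,K)$ in the Boolean case.

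The key step is the M\"obius identity $\mu(K,G) = (-1)^n\,\mu(H,K)$ for every $K \in [H,G]$. First I would note that since $[H,G]$ is Boolean, any closed subinterval is again Boolean; identifying $[H,G]$ with $\mathcal{B}_n$ and writing $r(K)$ for the rank of $K$ (its distance from $H$), the subintervals $[H,K]$ and $[K,G]$ are Boolean of ranks $r(K)$ and $n - r(K)$ respectively. Using the fact recorded in the preliminaries that $\mu(\mathcal{B}_m) = (-1)^m$, this gives $\mu(H,K) = (-1)^{r(K)}$ and $\mu(K,G) = (-1)^{n-r(K)}$. Since $(-1)^{-r(K)} = (-1)^{r(K)}$, we obtain $\mu(K,G) = (-1)^n\,\mu(H,K)$ as claimed, uniformly in $K$.

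Substituting this relation into Bouc's formula then finishes the argument at once:
$$\mu(\hat{C}(H,G)) = -\sum_{K \in [H,G]} (-1)^n\,\mu(H,K)\,|G:K| = -(-1)^n \sum_{K \in [H,G]} \mu(H,K)\,|G:K| = -(-1)^n\,\hat{\varphi}(H,G).$$
There is no real obstacle here: the entire content is the Boolean M\"obius identity $\mu(K,G)=(-1)^n\mu(H,K)$, which follows from the multiplicativity of $\mu$ together with $\mu(\mathcal{B}_m)=(-1)^m$. The only point deserving a word of care is the observation that subintervals of a Boolean lattice are themselves Boolean, so that the rank bookkeeping is valid; everything else is a direct substitution.
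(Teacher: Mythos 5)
Your proposal is correct and follows essentially the same route as the paper: the paper's one-line proof cites exactly the identity $\mu(K,G) = (-1)^n\mu(H,K)$ for $K \in [H,G]$ and substitutes it into the relative Bouc formula stated just before the lemma, precisely as you do. The only difference is that you spell out the verification of this M\"obius identity from $\mu(\mathcal{B}_m) = (-1)^m$ and the fact that subintervals of a Boolean lattice are Boolean, which the paper leaves implicit.
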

\begin{proof}
It follows from the fact that $\mu(K,G)=(-1)^n\mu(H,K)$, for $K \in [H,G]$. 
\end{proof}

If $\mu(\hat{C}(H,G))$ is nonzero, then $\Delta(C(H,G))$ is not contractible (see Subsection \ref{CoMa}). The remaining part of this subsection will be about non-contractibility. We first note that $G$ acts by right multiplication on $C(H,G)$, and then we have:

\begin{lemma}  \label{fpf}
Let $[H,G]$ be a w-cyclic interval of finite groups. Let $x \in G$ be such that $\langle H,x \rangle = G$. Then the cyclic group $\langle x \rangle$ acts fixed-point-freely by right multiplication on $C(H,G)$ if and only if for all $g \in G$, we have $\langle H,gxg^{-1} \rangle = G$.
\end{lemma}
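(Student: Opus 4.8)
The plan is to reduce the whole statement to one elementary identity about when right multiplication by $x$ fixes a coset. First I would pin down the action: for $h\in G$, right multiplication sends a coset $Kg\in C(H,G)$ (with $K\in[H,G)$) to $Kgh$; this is well defined on right cosets and order preserving, exactly as in Lemma \ref{posetisom}. Since $\langle x\rangle$ is generated by $x$, a coset is a fixed point of the $\langle x\rangle$-action if and only if it is fixed by $x$ itself (if $Kgx=Kg$ then $Kgx^i=Kg$ for all $i$). Thus it suffices to determine when $x$ fixes an element of the poset, and here the driving computation is
$$Kgx=Kg \iff Kgxg^{-1}=K \iff gxg^{-1}\in K.$$
So a coset $Kg$ is fixed by $x$ precisely when the conjugate $gxg^{-1}$ lies in the subgroup $K$.

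For the ($\Leftarrow$) direction I would argue by contradiction. Assume $\langle H,gxg^{-1}\rangle=G$ for every $g\in G$, and suppose some $Kg\in C(H,G)$ were fixed by $x$. By the identity above, $gxg^{-1}\in K$, and since $K\in[H,G)$ we also have $H\subseteq K$; hence $\langle H,gxg^{-1}\rangle\subseteq K\subsetneq G$, contradicting the hypothesis. Therefore no coset is fixed, and the action is fixed-point-free.

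For the ($\Rightarrow$) direction I would prove the contrapositive. If the right-hand condition fails, choose $g\in G$ with $K:=\langle H,gxg^{-1}\rangle\neq G$. Then $K$ is a proper subgroup containing $H$, so $K\in[H,G)$ and $Kg$ is a genuine element of $C(H,G)$; moreover $gxg^{-1}\in K$ by construction, so $Kgx=Kg$ by the identity above. Thus $x$, and hence $\langle x\rangle$, fixes the coset $Kg$, and the action is not fixed-point-free.

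The argument is essentially a definition-chase, so I do not anticipate a serious obstacle. The only genuinely delicate points are interpretive: (i) one must read ``$\langle x\rangle$ acts fixed-point-freely'' as the absence of a coset fixed by the generator $x$ (equivalently, a global fixed point of the whole cyclic group), rather than as a \emph{free} action — the free reading makes the equivalence false, as $C_6=\langle x\rangle$ acting on $C(1,C_6)$ already shows, where $x^2$ fixes the coset $\langle x^2\rangle$ even though $\langle H,gxg^{-1}\rangle=G$ holds for all $g$; and (ii) in the forward direction one must take the witness subgroup to be exactly $K=\langle H,gxg^{-1}\rangle$, so that the fixed coset $Kg$ is guaranteed to lie in $C(H,G)$.
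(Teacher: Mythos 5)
Your proof is correct and follows essentially the same route as the paper's: both reduce the statement to the observation that a coset $Kg$ is fixed by $x$ precisely when $gxg^{-1}\in K$ (the paper packages this as $Hg\vee Hgx=\langle H,gxg^{-1}\rangle g$), and both exhibit the fixed coset $\langle H,gxg^{-1}\rangle g$ when some conjugate of $x$ fails to generate $G$ together with $H$. Your interpretive point (i) also matches the paper's intended reading of fixed-point-freeness, namely the absence of a coset fixed by all of $\langle x\rangle$, equivalently by $x$ itself.
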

\begin{proof}
Assume that there is $g \in G$ with $K:=\langle H,gxg^{-1} \rangle < G$. Then the proper coset $Kg$ is a fixed-point because $Kgx = Kg$ as $gxg^{-1} \in K$. Conversely, assume that $\langle H,gxg^{-1} \rangle=G$ for all $g \in G$. Suppose that $Kg$ is a coset fixed by $x$. Then the coset $Hg \vee Hgx$ is subset of $Kg$. But $Hg \vee Hgx = \langle H,gxg^{-1} \rangle g = G$ by assumption. The result follows.
\end{proof}

\begin{definition}
An interval of finite groups $[H,G]$ is called strongly w-cyclic if there is $x \in G$ such that $\langle H,gxg^{-1} \rangle = G$ for all $g \in G$, or equivalently, if $\bigcup_{g,i}g^{-1}M_ig \neq G$, with $M_1, \dots, M_n$ the coatoms of $[H,G]$.
\end{definition}

The Boolean group-complemented non-Dedekind intervals of Remark \ref{compnotdede} are not strongly w-cyclic, using GAP. Now, for $|G:H| < 32$ and $|G| \le 10^5$, there are (up to equivalence) $25608$ intervals, $21918$ of them are w-cyclic, $21773$ are top Boolean, $21708$ are strongly w-cyclic, $23323$ are Dedekind. Lastly, $21274$ are both w-cyclic and Dedekind, and all of them are also strongly w-cyclic. This leads us to ask whether any Dedekind w-cyclic interval $[H,G]$ is strongly w-cyclic. Note that if $H$ is a maximal subgroup, then $[H,G]$ is w-cyclic Dedekind, and also strongly w-cyclic, because the union of the conjugate subgroups of a proper subgroup (of a finite group) is always a proper subset, by the Orbit-Stabilizer Theorem. 

\begin{theorem}
Let $[H,G]$ be a strongly w-cyclic interval of finite groups. Then $\Delta(C(H,G))$ is not contractible.
\end{theorem}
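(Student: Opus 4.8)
The plan is to produce a fixed-point-free self-map of the geometric realization of $\Delta(C(H,G))$ and then invoke the Lefschetz fixed point theorem. Since $[H,G]$ is strongly w-cyclic, I would first fix $x \in G$ with $\langle H, gxg^{-1}\rangle = G$ for all $g \in G$. Taking $g = 1$ shows $\langle H, x\rangle = G$, so $[H,G]$ is w-cyclic and Lemma \ref{fpf} applies: right multiplication by $x$ fixes no coset in $C(H,G)$. Moreover, right multiplication by $x$ is order-preserving on the coset poset (by the reasoning of Lemma \ref{posetisom} it carries a containment $Kg \subseteq K'g'$ to $Kgx \subseteq K'g'x$, with inverse given by multiplication by $x^{-1}$), so it induces a simplicial automorphism $f$ of the order complex $\Delta(C(H,G))$ that fixes no vertex.

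The key step is to upgrade fixed-point-freeness on vertices to fixed-point-freeness on the geometric realization. Any fixed point of $|f|$ lies in the relative interior of a unique simplex $\sigma$, which must therefore satisfy $f(\sigma) = \sigma$ setwise. But $\sigma$ is a chain $K_1 g_1 \subsetneq \cdots \subsetneq K_m g_m$, and since $f$ is order-preserving the induced permutation of the vertices of $\sigma$ respects this total order and hence is the identity; that is, $f$ fixes every $K_i g_i$. This contradicts the fact that $f$ fixes no vertex. Consequently $|f|$ has no fixed point on $\Delta(C(H,G))$.

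Finally I would apply the Lefschetz fixed point theorem to the finite simplicial complex $\Delta(C(H,G))$. If it were contractible, then $H_0(\Delta(C(H,G));\mathbb{Q}) = \mathbb{Q}$ while $H_i = 0$ for $i \geq 1$, and $f$ acts as the identity on $H_0$; thus the Lefschetz number is $L(f) = 1 \neq 0$, which forces $f$ to possess a fixed point, contradicting the previous paragraph. Therefore $\Delta(C(H,G))$ is not contractible.

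I expect the main obstacle to be the second paragraph, namely the passage from vertexwise fixed-point-freeness to genuine fixed-point-freeness on the realization: a generic simplicial automorphism can fix the barycenter of an invariant simplex without fixing any vertex, and it is precisely the \emph{order-preserving} nature of the action (inherited from the poset structure of $C(H,G)$) that rules this out. The other two ingredients, Lemma \ref{fpf} and the Lefschetz theorem for a continuous self-map of a finite complex, are standard.
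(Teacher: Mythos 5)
Your proof is correct, and its first half coincides with the paper's: both take the $x$ witnessing strong w-cyclicity, note that the case $g=1$ gives $\langle H,x\rangle = G$, and use Lemma \ref{fpf} to conclude that the cyclic group $\langle x\rangle$ acts fixed-point-freely by right multiplication on the poset $C(H,G)$. At that point the paper stops and simply cites \cite[Theorem 3.4]{shawo}, which asserts (for a class of groups that includes all cyclic groups) that a fixed-point-free action on a finite poset forces the order complex to be non-contractible. You instead prove this topological fact for cyclic groups from scratch: a fixed point of the induced map on the geometric realization would lie in the interior of a setwise-invariant simplex, i.e.\ a setwise-invariant chain, and an order-preserving bijection of a finite chain onto itself is the identity, so the map would fix a vertex, contradicting Lemma \ref{fpf}; the Lefschetz fixed point theorem then rules out contractibility (indeed even $\mathbb{Q}$-acyclicity). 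Your second paragraph is exactly the point where poset actions behave better than arbitrary simplicial actions, and your handling of it is correct. The trade-off: your route is self-contained and elementary, needing only Lefschetz, whereas the paper's citation outsources the topology to a stronger result (resting on Smith/Oliver-type fixed point theory) that applies to group actions more general than cyclic ones.
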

\begin{proof}
By Lemma \ref{fpf}, there is a non-trivial cyclic group acting fixed-point-freely by automorphisms on $C(H,G)$, so the result follows by \cite[Theorem 3.4]{shawo}. 
\end{proof}

\noindent As for Lemma \ref{topred}, an interval is strongly w-cyclic if its top interval is so. Moreover:

\begin{theorem}
If $[T,G]$ is the top interval of $[H,G]$, then $C(H,G)$ is homotopy-equivalent to $C(T,G)$.
\end{theorem}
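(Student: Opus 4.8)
The plan is to exhibit $C(T,G)$ as a deformation retract of $C(H,G)$ through an explicit poset map, and then invoke the standard order-homotopy lemma for posets.

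First I would record the structural fact that drives everything. Since $T$ is the meet of all coatoms of $[H,G]$, every coatom $M$ of $[H,G]$ satisfies $T\le M$; consequently the coatoms of $[T,G]$ coincide with those of $[H,G]$. Each $K\in[H,G)$ lies below some coatom $M$, so $K\vee T\le M<G$ and hence $K\vee T\in[T,G)$. Because $[T,G)\subseteq[H,G)$ and the right cosets of these subgroups are precisely the vertices of the respective order complexes, $C(T,G)$ is literally a subposet of $C(H,G)$; write $\iota\colon C(T,G)\hookrightarrow C(H,G)$ for this inclusion.

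Next I would define the candidate retraction $f\colon C(H,G)\to C(T,G)$ by $f(Kg)=(K\vee T)g$ and verify three things. It is well defined on cosets: if $Kg=Kg'$ then $g(g')^{-1}\in K\subseteq K\vee T$, so $(K\vee T)g=(K\vee T)g'$. Its image lands in $C(T,G)$ by the previous paragraph, since $K\vee T\in[T,G)$. And it is order preserving: a relation $K_1g_1\subseteq K_2g_2$ forces $K_1\subseteq K_2$ and $g_1(g_2)^{-1}\in K_2$, whence $K_1\vee T\subseteq K_2\vee T$ and $g_1(g_2)^{-1}\in K_2\subseteq K_2\vee T$, giving $(K_1\vee T)g_1\subseteq(K_2\vee T)g_2$. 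Finally, for $Kg\in C(T,G)$ one has $T\le K$, so $K\vee T=K$ and $f(\iota(Kg))=Kg$; thus $f\circ\iota=\mathrm{id}_{C(T,G)}$.

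It then remains to compare $\iota\circ f$ with the identity on $C(H,G)$. The composite $c:=\iota\circ f$ sends $Kg\mapsto(K\vee T)g\supseteq Kg$, so $c(x)\ge x$ for all $x$; moreover $c$ is order preserving and idempotent (as $(K\vee T)\vee T=K\vee T$), hence $c$ is a \emph{closure operator} on $C(H,G)$ with image $C(T,G)$. By the standard order-homotopy lemma (if order-preserving maps satisfy $\mathrm{id}(x)\le c(x)$ pointwise, then they induce homotopic maps of order complexes, see \cite{wa}), we get $c\simeq\mathrm{id}$. Combined with $f\circ\iota=\mathrm{id}_{C(T,G)}$, this shows $\iota$ and $f$ are mutually homotopy-inverse, so $\Delta(C(H,G))\simeq\Delta(C(T,G))$, as claimed. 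The only substantive point is the properness claim $K\vee T\neq G$, which is exactly where the definition of $T$ as the meet of the coatoms is used; the remaining verifications are routine bookkeeping with right cosets.
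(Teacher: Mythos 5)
Your proof is correct, and it is built around exactly the same map as the paper's, namely $f\colon C(H,G)\ni Kg\mapsto (T\vee K)g\in C(T,G)$; the difference lies in the concluding lemma. The paper finishes with Quillen's fiber lemma \cite[Proposition 1.6]{qui}: for any $Lg\in C(T,G)$ the fiber $f^{-1}(C(T,G)_{\le Lg})$ has $Lg$ itself as a maximum element (any $Kg'$ in the fiber satisfies $Kg'\subseteq (T\vee K)g'\subseteq Lg$), hence is contractible, and the equivalence follows at once. You instead check that $\iota\circ f$ is a closure operator on $C(H,G)$ with image $C(T,G)$ and that $f\circ\iota=\mathrm{id}$, then apply the order homotopy lemma. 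The two routes are close relatives --- both lemmas come from the same toolkit, and the fiber condition in the paper's proof is really the same observation as your $c(x)\ge x$ --- but yours is the more elementary one: the order homotopy theorem has a direct combinatorial proof, whereas the fiber lemma needs the nerve theorem, and the closure-operator formulation in fact yields the slightly stronger conclusion that $\Delta(C(T,G))$ is a deformation retract of $\Delta(C(H,G))$. Your write-up is also more scrupulous about the one substantive point the paper leaves implicit: that $f$ really lands in $C(T,G)$, i.e.\ that $K\vee T<G$ for every $K\in[H,G)$, which you correctly extract from the definition of $T$ as the meet of the coatoms (every proper $K$ sits under a coatom $M$, and $T\le M$ forces $K\vee T\le M<G$). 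Both proofs rest on this fact; yours records it explicitly.
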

\begin{proof}
Consider the map $f:C(H,G) \ni Kg \mapsto T \vee K g \in C(T,G)$. For any $Lg \in C(T,G)$, observe that $f^{-1}(C(T,G)_{\le Lg})$ has a maximum element ($Lg$ itself). The result follows by Quillen fiber lemma \cite[Proposition 1.6]{qui}.  
\end{proof}

% $f^{-1}(C(T,G)_{\le Lg})$
%\begin{proposition}
%The poset $C(H,G)$ is homotopy-equivalent to $C(T,G)$, with $[T,G]$ the top interval of $[H,G]$.
%\end{proposition}
%\begin{proof}
%Consider the poset map $C(H,G) \ni Kg \mapsto T \vee K g \in C(T,G)$. For any coset $Lg$ of $C(T,G)$, the inverse image of $\{L'g' \in C(T,G) \ | \  L'g' \le L g \}$ has greatest element $L g$. Thus, such inverse images are contractible. The Quillen fiber lemma \cite[Theorem 10.5]{bjo2} yields the desired homotopy equivalence.
%\end{proof}
\subsection{An edge labeling for $\hat{C}(H,G)$}

Let us fix some notations which shall be used in the sequel. The interval of finite groups $[H,G]$ will be Boolean. Then the poset $\hat{C}(H,G)$ is graded. Let $K_1, \ldots, K_n$ be the atoms in the interval $[H,G]$. Then, the coatoms are of the form $M_i:={K_i}^{\complement} =\vee_{j \neq i} K_j$. We observe that:

\begin{lemma}
Any cover relation of $\hat{C}(H,G)$ is of the form $\emptyset \lessdot Hg$ or $Xg \lessdot Yg$ with $g \in G$, $Y=X \vee K_i$ for some $i$ and $K_i \nsubseteq X$.
\end{lemma}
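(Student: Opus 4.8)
The plan is to analyze what a cover relation $A \lessdot B$ in $\hat{C}(H,G)$ can look like, by splitting into cases according to whether the elements are the artificial bounds $\emptyset, G$ or genuine proper cosets. First I would dispose of the boundary cases: since $\hat{C}(H,G)$ is graded of rank equal to $\mathrm{rank}[H,G] + 1 = n+1$, the minimum $\emptyset$ is covered exactly by the atoms of $C(H,G)$, which are the minimal proper cosets, namely the cosets $Hg$ of $H$ itself (as $H$ is the bottom of $[H,G]$); this gives the form $\emptyset \lessdot Hg$. Dually, the cosets covered by the maximum $G$ are the coatoms $M_i g$, and these are already subsumed in the second form with $X = H$ replaced appropriately, but to be careful I would treat $G$ as $\hat 1$ separately and note it is not of the form $Yg$ with $Y \in [H,G)$; the lemma as stated only claims genuine covers among proper cosets plus the bottom covers, so I would confirm the intended scope.

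The heart of the argument is the cover relation $Ag \lessdot Bg'$ between two proper cosets. The first key step is to show that any cover relation between proper cosets forces the two cosets to share a common representative, i.e. can be written $Xg \lessdot Yg$ for a single $g$. This follows from Lemma \ref{posetisom}: if $Ag_1 \subseteq Bg_2$ as cosets with $A \subseteq B$ in $[H,G)$, then $Ag_1 \subseteq Bg_2$ means $Bg_2 = Bg_1$, so both cosets are cosets of subgroups with the common representative $g := g_1$, yielding $Xg \subseteq Yg$ with $X = A$, $Y = B$ and $X \subseteq Y$ in the subgroup lattice. The second key step is to translate the poset cover relation into a lattice cover relation: $Xg \lessdot Yg$ in $\hat{C}(H,G)$ holds if and only if $X \lessdot Y$ in $[H,G]$, again by the isomorphism of Lemma \ref{posetisom} which carries the interval $[X,Y]$ onto $[Xg,Yg]$ and hence preserves covers.

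The final and most substantive step is to identify the cover relations $X \lessdot Y$ in the Boolean lattice $[H,G]$ with the described form $Y = X \vee K_i$ where $K_i \nsubseteq X$. Here I would invoke the structure of the Boolean lattice $\mathcal{B}_n$ with atoms $K_1, \dots, K_n$: every element $X$ is a join of a subset $S$ of atoms, and the elements covering $X$ are exactly $X \vee K_i$ for the atoms $K_i \notin S$, equivalently $K_i \nsubseteq X$. This is the standard description of covers in a Boolean lattice and matches the claimed form exactly. I expect the main obstacle to be the bookkeeping in the first step---carefully justifying that a containment of cosets with no a priori shared representative can always be rewritten with a common $g$, and confirming that the atoms of $[H,G]$ are precisely the $K_i$ and that $H$ itself contributes the $\emptyset \lessdot Hg$ covers rather than appearing among the $Xg \lessdot Yg$ relations. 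Everything else reduces to the purely combinatorial cover structure of $\mathcal{B}_n$ together with the order-isomorphism of Lemma \ref{posetisom}.
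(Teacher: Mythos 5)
Your proof is correct and is essentially the argument the paper leaves implicit (it states this lemma as a bare observation, with no proof): reduce coset cover relations to subgroup cover relations via the common-representative observation ($g_1 \in Bg_2$ forces $Bg_2 = Bg_1$) together with Lemma \ref{posetisom}, then invoke the standard description of covers in a Boolean lattice. The one point you hedge on resolves affirmatively and needs no separate case: the covers $M_i g \lessdot G$ below the top element are not outside the lemma's scope, since $G = Gg$, so they are instances of $Xg \lessdot Yg$ with $X = M_i$, $Y = M_i \vee K_i = G$ and $K_i \nsubseteq M_i$.
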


The following edge labeling was suggested to us by Woodroofe on MathOverflow, and is also closely related to that of \cite{russ3, russ2}.
\begin{definition} \label{el} Let \emph{$el$} be the following edge labeling on $\hat{C}(H,G)$:
\begin{itemize}
\item $el(\emptyset \lessdot Hg) = 0$
\item $el(Xg \lessdot Yg)= \begin{cases}
 						-i~ \hspace{.2cm} \text{if}~ Xg=Yg \cap M_i,\\
 						+i~ \hspace{.2cm} \text{otherwise.}
 					\end{cases}$
\end{itemize}
\end{definition} 
\begin{lemma} \label{ginmi}
Let $Y=X \vee K_i$ with $K_i \nsubseteq X$. Then, $$Xg=Yg \cap M_i \Leftrightarrow g \in M_i.$$
\end{lemma}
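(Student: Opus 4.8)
The plan is to reduce the biconditional to a single lattice identity, namely $Y \wedge M_i = X$ inside the Boolean interval $[H,G]$, and then to translate this into a coset computation using only that $M_i$ is a subgroup and that $H \subseteq X$. First I would record the lattice fact. Since $[H,G]$ is Boolean with atoms $K_1,\dots,K_n$, every element is the join of the atoms below it; write $X = \vee_{j \in S}K_j$ for some $S \subseteq \{1,\dots,n\}$. The hypothesis $K_i \nsubseteq X$ forces $i \notin S$, while $Y = X \vee K_i$ corresponds to $S \cup \{i\}$ and $M_i = \vee_{j\neq i}K_j$ corresponds to $\{1,\dots,n\}\setminus\{i\}$. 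Intersecting the index sets gives $Y \wedge M_i = X$, and since the meet in the subgroup lattice is set-theoretic intersection, this reads $Y \cap M_i = X$ as subgroups. In particular $X \subseteq M_i$ and $X \subseteq Y$.

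For the implication $g \in M_i \Rightarrow Xg = Yg \cap M_i$ I would argue by double inclusion. From $X \subseteq Y$ we get $Xg \subseteq Yg$, and from $X \subseteq M_i$ together with $g \in M_i$ we get $Xg \subseteq M_iM_i = M_i$; hence $Xg \subseteq Yg \cap M_i$. Conversely, any $w \in Yg \cap M_i$ has the form $w = yg$ with $y \in Y$, and since $w,g \in M_i$ with $M_i$ a subgroup, $y = wg^{-1} \in M_i$; thus $y \in Y \cap M_i = X$ and $w \in Xg$. This yields the reverse inclusion, hence equality.

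For the converse $Xg = Yg \cap M_i \Rightarrow g \in M_i$ I would simply observe that the stated equality forces $Xg \subseteq M_i$, and since $H \subseteq X$ we have $e \in X$, so $g = eg \in Xg \subseteq M_i$. Note that a coset $Xg$ is never empty, so the inclusion $Xg \subseteq M_i$ alone already delivers $g \in M_i$; one need not separately worry about whether $Yg \cap M_i$ is empty.

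The only genuine content is the lattice identity $Y \cap M_i = X$; once that is in hand, both directions are short set-theoretic manipulations requiring no induction or deeper machinery. I expect the mildly delicate point to be the reverse inclusion in the first implication, where it is essential that $M_i$ is a true subgroup (so that $g^{-1} \in M_i$ and the product $Yg \cap M_i$ descends to a coset of $Y \cap M_i$) rather than merely a coset, and that the lattice meet of $Y$ and $M_i$ coincides with their intersection as sets.
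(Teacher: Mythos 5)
Your proposal is correct and follows essentially the same route as the paper: both rest on the Boolean lattice identity $X = Y \wedge M_i$ (meet being intersection of subgroups) and then reduce the biconditional to short coset manipulations, with the converse in both cases being the one-line observation $g \in Xg \subseteq M_i$. The only cosmetic difference is that the paper compresses your double-inclusion argument into the single identity $Xg = (Y\cap M_i)g = Yg \cap M_i g = Yg \cap M_i$, using $M_i g = M_i$ for $g \in M_i$.
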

\begin{proof}
%Suppose that $[H,G]$ is Boolean of rank $n$. Let $\phi:[H,G] \longrightarrow \mathcal{B}_n$ be the lattice isomorphism such that $\phi(K_i)=\{i\}$. Then $\forall X \in [H,G]$, $$K_i \nsubseteq X \Leftrightarrow \{i\} \nsubseteq \phi(X) \ \Leftrightarrow \phi(X) \subseteq {\{i\}}^\complement \  \Leftrightarrow X \subseteq {K_i}^\complement = M_i.$$   
% 
%
%\noindent Now assume that $g \in M_i$, then 
%$$ Y \wedge M_i = (X \vee K_i) \wedge M_i =(X \wedge M_i) \vee (K_i \wedge M_i)
%= X \vee H = X. $$

%The second equality uses the distributivity of $[H,G]$ Boolean, and the third follows from the fact that $X \subseteq M_i$ and $K_i \wedge M_i=K_i \wedge {K_i}^\complement=H$.
By the Boolean structure, $X = Y \wedge M_i$. This implies that $Xg=Yg \cap M_ig=Yg \cap M_i$ as $g \in M_i$. Conversely if $Xg=Yg \cap M_i$, then $g \in Xg \subseteq M_i$.
\end{proof}

\begin{example} \label{Z6} The edge labeling $el$ for $\hat{C}([0], C_6)$:   
\begin{center} $\begin{tikzpicture}
\node (G) at (0,-1*.7) {\tiny $C_6$};
\node (A0) at (-4,-2*.7) {\tiny $C_3$};
\node (A1) at (-2,-2*.7) {\tiny $C_3+[1]$};
\node (B0) at (0.5,-2*.7) {\tiny $C_2$};
\node (B1) at (2.5,-2*.7) {\tiny $C_2 + [1]$};
\node (B2) at (4.5,-2*.7) {\tiny $C_2 + [2]$};
\node (H0) at (-5.5*0.8,-4*.7) {\scriptsize $[0]$};
\node (H1) at (-3.5*0.8,-4*.7) {\scriptsize $[1]$};
\node (H2) at (-1.5*0.8,-4*.7) {\scriptsize $[2]$};
\node (H3) at (1.5*0.8,-4*.7) {\scriptsize $[3]$};
\node (H4) at (3.5*0.8,-4*.7) {\scriptsize $[4]$};
\node (H5) at (5.5*0.8,-4*.7) {\scriptsize $[5]$};
\node (0) at (0,-5*.7) {\tiny $\emptyset$};
\node  at (-4*0.5-0.7,-1.5*.7) {\color{blue}{\tiny $-2$}};
\node  at (-2*0.5+0.3,-1.5*.7) {\color{blue}{\tiny $2$}};
\node  at (0.5*0.5-0.2,-1.5*.7) {\color{blue}{\tiny $-1$}};
\node  at (2.5*0.5-0.4,-1.5*.7) {\color{blue}{\tiny $1$}};
\node  at (4.5*0.5+0.7,-1.5*.7) {\color{blue}{\tiny $1$}};
\node  at (-5.5*0.5,-4.5*.7) {\color{blue}{\tiny $0$}};
\node  at (-2*0.5,-4.5*.7) {\color{blue}{\tiny $0$}};
\node  at (-.75*0.5,-4.5*.7) {\color{blue}{\tiny $0$}};
\node  at (.75*0.5,-4.5*.7) {\color{blue}{\tiny $0$}};
\node  at (2*0.5,-4.5*.7) {\color{blue}{\tiny $0$}};
\node  at (5.5*0.5,-4.5*.7) {\color{blue}{\tiny $0$}};
\node  at (-5.5*0.8,-3*.7) {\color{blue}{\tiny $-1$}};
\node  at (-3.8,-3.6*.7) {\color{blue}{\tiny $-2$}};  
\node  at (-2.7,-3.6*.7) {\color{blue}{\tiny $1$}};
\node  at (-2.3,-3.95*.7) {\color{blue}{\tiny $2$}};
\node  at (-3.5,-2.5*.7) {\color{blue}{\tiny $1$}}; 
\node  at (-3.3,-2.05*.7) {\color{blue}{\tiny $1$}}; 
\node  at (-0.5,-3.9*.7) {\color{blue}{\tiny $-2$}};
\node  at (0.5,-3.8*.7) {\color{blue}{\tiny $-1$}};
\node  at (3.8,-3.65*.7) {\color{blue}{\tiny $1$}}; 
\node  at (4.3,-3.1*.7) {\color{blue}{\tiny $2$}};
\node  at (2.92,-3.1*.7) {\color{blue}{\tiny $-2$}};
\node  at (0.5,-2.4*.7) {\color{blue}{\tiny $2$}};

\tikzstyle{segm}=[-,>=latex, semithick]
\draw [segm] (G)to(A0); \draw [segm, red] (G)to(A1);
\draw [segm] (G)to(B0);\draw [segm] (G)to(B1);\draw [segm] (G)to(B2);
\draw [segm] (A0)to(H0); \draw [segm] (A0)to(H2);\draw [segm] (A0)to(H4);
\draw [segm, red] (A1)to(H1); \draw [segm] (A1)to(H3);\draw [segm,red] (A1)to(H5);
\draw [segm] (B0)to(H0); \draw [segm] (B0)to(H3);
\draw [segm] (B1)to(H1); \draw [segm] (B1)to(H4);
\draw [segm] (B2)to(H2); \draw [segm] (B2)to(H5);
\draw [segm] (0)to(H0); \draw [segm, red] (0)to(H1);\draw [segm] (0)to(H2);
\draw [segm] (0)to(H3);\draw [segm] (0)to(H4);\draw [segm, red] (0)to(H5);
\end{tikzpicture} 
$ \end{center}
\end{example}

%Now we discuss a necessary and sufficient condition for the edge labeling $el$ (see Definition \ref{el}) to be a dual EL-labeling.

\begin{theorem}\label{EL}
Let $[H,G]$ be a Boolean interval. The edge labeling $el$ on $\hat{C}(H,G)$  is a dual EL-labeling if and only if $[H,G]$ is group-complemented.
\end{theorem}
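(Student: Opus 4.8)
The plan is to verify the defining conditions of a dual EL-labeling interval by interval of $\hat{C}(H,G)$, and to isolate exactly where the group-complemented hypothesis becomes necessary. Since a dual EL-labeling is an EL-labeling of the order-dual, for every closed interval $[x,y]$ of $\hat{C}(H,G)$ I must exhibit, reading each maximal chain from the top $y$ \emph{down} to the bottom $x$, a unique strictly increasing chain which is moreover lexicographically least. I would first classify the intervals into two families: those avoiding $\emptyset$, which are all of the form $[Xg,Yg]$, and those of the form $[\emptyset,Kg]$ with $K \in [H,G)$ (the full poset $[\emptyset,G]$ being the case $K=G$). By Lemma \ref{posetisom} each $[Hg'',Kg]$ is a translate of $[H,K] \cong \mathcal{B}_m$, which keeps the combinatorics transparent.

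For an interval $[Xg,Yg]$ avoiding $\emptyset$, every element is $Lg$ for the \emph{same} representative $g$ (from $Xg \subseteq Lg'$ one gets $Lg'=Lg$), so by Lemma \ref{ginmi} the sign of the label in each direction $i$ is constant: it is $-i$ if $g \in M_i$ and $+i$ otherwise. As $[Xg,Yg] \cong \mathcal{B}_m$ involves distinct directions, all its edge labels are distinct, so every subinterval has a unique increasing and a unique decreasing maximal chain, with lex-leastness automatic. Thus these intervals always satisfy the (dual) EL condition, independently of group-complementedness.

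The content lies in the intervals $[\emptyset,Kg]$. A maximal chain there reads $\emptyset \lessdot Hg'' \lessdot \cdots \lessdot Kg$; read top-down the terminal label (on $\emptyset \lessdot Hg''$) is $0$, so a strictly increasing chain must have all earlier labels negative. The flag from $Hg''$ up to $Kg$ meets every direction $i \in I_K := \{i : K_i \subseteq K\}$, and by Lemma \ref{ginmi} its label in direction $i$ equals $-i$ exactly when $g'' \in M_i$; hence ``all labels negative'' is equivalent to $g'' \in \bigcap_{i \in I_K} M_i$. The key computation is that, since meet equals intersection in the subgroup lattice and De Morgan gives $\bigwedge_{i \in I_K} K_i^{\complement} = (\bigvee_{i \in I_K} K_i)^{\complement}$, one has $\bigcap_{i \in I_K} M_i = K^{\complement}$. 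So the admissible bottom atoms are the $H$-cosets contained in $K^{\complement} \cap Kg$. I would then show $K^{\complement}\cap Kg$ is either empty or a single right $H$-coset: it is stable under left multiplication by $H$ (as $H \subseteq K$ and $H \subseteq K^{\complement}$), and any two of its elements differ by an element of $K \cap K^{\complement}=H$. Hence a strictly increasing chain exists and is unique iff $K^{\complement}\cap Kg \neq \emptyset$, i.e. iff $g \in KK^{\complement}$; and when it exists the chain is forced (its negative labels must strictly increase, so directions are removed in decreasing order of index), while a short greedy ``most-negative-first'' comparison shows it is lexicographically least.

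Assembling these facts, $el$ is a dual EL-labeling iff $K^{\complement} \cap Kg \neq \emptyset$ for every $K \in [H,G)$ and every $g \in G$, i.e. iff $KK^{\complement}=G$ for every $K$, which by Lemma \ref{grpcp} and Definition \ref{grpcpdef} is precisely group-complementedness. For the failing direction: if $[H,G]$ is not group-complemented, choose $K$ and $g$ with $g \notin KK^{\complement}$; then $K^{\complement} \cap Kg = \emptyset$, the interval $[\emptyset,Kg]$ admits no strictly increasing maximal chain, and $el$ cannot be a dual EL-labeling. I expect the main obstacle to be the bookkeeping of this third step: reading the dual order correctly so that the terminal $0$-label forces negativity, together with pinning down the identity $\bigcap_{i \in I_K} M_i = K^{\complement}$ and the ``single coset or empty'' dichotomy for $K^{\complement} \cap Kg$, since this is the exact point where the condition $KK^{\complement}=G$ enters.
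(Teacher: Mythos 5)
Your proposal is correct and follows essentially the same route as the paper's proof: the same split into intervals $[L_1g,L_2g]$ (handled via Lemma \ref{posetisom} and the constancy of signs from Lemma \ref{ginmi}) and intervals $[\emptyset,Lg]$, where the terminal $0$-label forces all other labels negative, the identification $\bigcap_{i\in I} M_i = L^{\complement}$ reduces existence of the increasing chain to $L^{\complement}\cap Lg\neq\emptyset$, i.e.\ $g\in LL^{\complement}$, and uniqueness follows from $L\cap L^{\complement}=H$. The only differences are cosmetic (your explicit ``single coset or empty'' dichotomy is exactly the paper's uniqueness computation $g_1g_2^{-1}\in L\cap L^{\complement}=H$), so there is nothing to add.
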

\begin{proof}
Each interval in $\hat{C}(H,G)$ should have a unique strictly increasing maximal chain (from top to bottom) which is also lexicographically first.  We will consider two kinds of intervals individually:

\vspace{.2cm}
\noindent Case 1. $[L_1g,L_2g]$: \\ 
By the Boolean structure of $[H,G]$, we can write $L_2$ uniquely as $L_1 \vee (\vee_{i \in I}K_i)$ with $I = \{i_1, \dots, i_p\}$. So by Lemma \ref{posetisom}, any maximal chain in $[L_1g,L_2g]$ is of the form
\begin{equation*}
L_1g \lessdot (L_1 \vee K_{i_1})g \lessdot \ldots \lessdot (L_1 \vee (\vee_{\substack{i \in I}}K_i))g=L_2g.
\end{equation*}
The labeling of all such chains is same up to permutation, because the sign of a label $i$ does not depend on choice of the chain, but depends only on the fact that $g \in M_i$ or not, by Lemma \ref{ginmi}. Now, by the Boolean structure, every permutation occurs exactly once. We can choose the lexicographically first, which is then unique and strictly increasing.

\smallskip
\noindent Case 2. $[\emptyset,Lg]$:\\
Take $I=\{i_1, \ldots, i_p\}$ such that $L=\vee_{i \in I}K_i$. 
%We assume that $i_1< \ldots <i_p$. 
Using the Boolean structure of $[H,G]$, any maximal chain in $[\emptyset,Lg]$ is of the form (where $g' \in Lg$) 
\begin{equation*}
\emptyset \lessdot Hg' \lessdot K_{i_1}g' \lessdot \ldots \lessdot (\vee_{\substack{i \in I}}K_i)g'=Lg'.
\end{equation*} 
%\vspace{.2cm}
\noindent \emph{Existence of a strictly increasing chain}\\
\noindent \emph{- Necessary condition:}\\  
Since the label of the leftmost edge of the above chain is $0$, for the existence of a strictly increasing chain, it is necessary to find $g' \in Lg$ for which all the other labels are negative, which means that $g' \in \cap_{i \in I} M_i=L^{\complement}.$ So, 
\begin{itemize}
\item[] $[\emptyset,Lg]$ admits a strictly increasing maximal chain, 
\item[$\Leftrightarrow$] $L^{\complement} \cap Lg \neq \emptyset$,
\item[$\Leftrightarrow$] there exist $l \in L$ and $l' \in L^{\complement}$ such that $l'=lg$,
\item[$\Leftrightarrow$] $g \in LL^{\complement}.$
\end{itemize}  
As these equivalent conditions should hold $\forall g \in G$ and $\forall L \in [H,G]$, we conclude that $G=LL^{\complement}$ and $[H,G]$ is group-complemented. 

\vspace{.05cm}
\noindent \emph{- Sufficient condition:}\\
The existence of $g' \in L^{\complement} \cap Lg$ is sufficient because by Case 1, there exists a unique strictly increasing maximal chain in $[Hg',Lg']$ which by adding the last label $0$, is still strictly increasing on $[\emptyset,Lg]$.

\vspace{.2cm}
\noindent The chain built just above is also \emph{lexicographically first} on $[\emptyset,Lg]$. 

\vspace{.2cm}
\noindent \emph{Uniqueness of the strictly increasing chain} \\
\noindent For the uniqueness, we just need to show that there exists a unique possible $Hg'$. Let $g_1, g_2 \in L^{\complement} \cap Lg$, we have $g_1=l_1g$, $g_2=l_2g$ and hence $g_1{g_2}^{-1}=l_1{l_2}^{-1} \in L$. Moreover, $g_1{g_2}^{-1} \in L^{\complement}$. Therefore  $g_1{g_2}^{-1} \in L \cap L^{\complement}=H$. Thus, $Hg_1=Hg_2$.
\end{proof}

\begin{remark}
The group-complemented assumption is necessary and sufficient for the intervals $[\emptyset,Lg]$, but this works in general for those of the form $[L_1g,L_2g]$. 
\end{remark}

By Theorem \ref{ELCM} and Remark \ref{dualrem}, Theorem \ref{EL} has the following consequence.  

\begin{corollary} \label{cp1}
Let $[H,G]$ be a Boolean group-complemented interval. Then $\hat{C}(H,G)$ is Cohen-Macaulay.
\end{corollary}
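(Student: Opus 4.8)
The plan is to read off the corollary as an immediate consequence of the dual EL-labeling just established, so the proof is short and the genuine work has already been done in Theorem \ref{EL}. First I would recall that since $[H,G]$ is assumed Boolean, the poset $\hat{C}(H,G)$ is graded (this is exactly the standing setup of the edge-labeling subsection: $\hat{C}(H,G) = C(H,G) \sqcup \{\emptyset, G\}$ is bounded, and by the cover-relation description every maximal chain from $G$ down to $\emptyset$ has the same length $n+1$, so it is pure and hence graded). Gradedness is the hypothesis needed to invoke Theorem \ref{ELCM}.

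Next I would apply Theorem \ref{EL} in the direction we need: because $[H,G]$ is Boolean \emph{and} group-complemented, the edge labeling $el$ of Definition \ref{el} is a dual EL-labeling on $\hat{C}(H,G)$. By definition a dual EL-labeling on $\hat{C}(H,G)$ is an EL-labeling on its dual poset $\hat{C}(H,G)^{\star}$ (order reversed), and $\hat{C}(H,G)^{\star}$ is again graded. Then Theorem \ref{ELCM}, applied to $\hat{C}(H,G)^{\star}$, yields that $\hat{C}(H,G)^{\star}$ is Cohen-Macaulay.

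Finally I would transfer this back across the dual. By Remark \ref{dualrem}, the order complex is invariant under taking duals and Cohen-Macaulayness is a property of the order complex, so $\hat{C}(H,G)$ is Cohen-Macaulay if and only if $\hat{C}(H,G)^{\star}$ is. Combining the two previous steps gives the claim.

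There is no substantive obstacle remaining at this stage: the entire combinatorial difficulty — verifying the unique-strictly-increasing-and-lexicographically-first condition for intervals of the two shapes $[L_1g, L_2g]$ and $[\emptyset, Lg]$, and pinning down the group-complemented hypothesis as exactly what makes the bottom intervals admit a strictly increasing chain — was dispatched in the proof of Theorem \ref{EL}. The only points worth stating carefully here are the bookkeeping ones, namely that $\hat{C}(H,G)$ is indeed graded in the Boolean case and that one must pass through the dual poset (via Remark \ref{dualrem}) since Theorem \ref{EL} produces a \emph{dual} EL-labeling rather than an ordinary one.
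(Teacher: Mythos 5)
Your proof is correct and follows exactly the paper's own route: the paper likewise deduces the corollary from Theorem \ref{EL} combined with Theorem \ref{ELCM} and Remark \ref{dualrem}, with the dual EL-labeling doing all the work. Your extra bookkeeping (gradedness of $\hat{C}(H,G)$ and the explicit passage through the dual poset) is just a careful spelling-out of what the paper leaves implicit.
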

%\begin{proof}
%Since the order complex is invariant by dual, so is the Cohen-Macaulay property. Therefore the result follows from Theorem \ref{EL} and Theorem \ref{ELCM}.
%\end{proof}

For better understanding of Theorem \ref{EL}, we discuss an easy example.  Since  the interval $[[0], C_6]$ is group-complemented, the labeling of Example \ref{Z6} is a dual EL-labeling. Moreover, there are two maximal decreasing chains, so by Theorem \ref{ELCM}, the order complex of the proper part of $\hat{C}([0], C_6)$  has the homotopy type of the wedge of two $\mathbb{S}^1$'s.

\begin{theorem} \label{cp2}
Let $[H,G]$ be a Boolean group-complemented interval. For $P=C(H,G)$, the nontrivial reduced Betti number of the order complex $\Delta(P)$ is exactly the dual Euler totient $\hat{\varphi}(H,G)$. It is then the number of cosets $Hg$ such that $\langle Hg \rangle = G$, which is nonzero.
\end{theorem}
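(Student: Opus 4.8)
The plan is to harvest the homological consequences of the dual EL-labeling already in hand. By Theorem~\ref{EL} the labeling $el$ of Definition~\ref{el} is a dual EL-labeling of $\hat{C}(H,G)$ (this is exactly where the group-complemented hypothesis is used), so by Theorem~\ref{ELCM} together with Remark~\ref{dualrem} the bounded poset $\hat{P}=\hat{C}(H,G)$ is Cohen-Macaulay and $\Delta(P)$ has the homotopy type of a wedge of $d$-spheres, where $d=\dim\Delta(P)$. In particular $\tilde{\beta}_k(\Delta(P))=0$ for all $k<d$, so $\tilde{\beta}_d(\Delta(P))$ is the unique reduced Betti number that can be nonzero, i.e.\ the ``nontrivial'' one in the statement.

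Next I would carry out the rank-and-sign bookkeeping. Write $r$ for the rank of the Boolean interval $[H,G]$. A maximal chain of $C(H,G)$ runs from an atom-coset $Hg$ up to a coatom-coset $M_ig$, hence consists of $r$ cosets and spans an $(r-1)$-simplex, so $d=\dim\Delta(P)=r-1$. Using $\tilde{\chi}(\Delta(P))=\mu(\hat{P})$ together with the Euler--Poincar\'e formula, the vanishing of the lower Betti numbers gives $\tilde{\beta}_d(\Delta(P))=(-1)^d\mu(\hat{P})$ (this is the equality of quantities (2) and (3) in Theorem~\ref{ELCM}). I then substitute Lemma~\ref{mobeul0}, which reads $\mu(\hat{C}(H,G))=-(-1)^r\hat{\varphi}(H,G)$. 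Since $d=r-1$, the two sign factors $(-1)^{r-1}$ and $-(-1)^r$ multiply to $+1$, yielding exactly $\tilde{\beta}_d(\Delta(P))=\hat{\varphi}(H,G)$.

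Finally, to re-express this as a coset count and to conclude nonvanishing, I use that $[H,G]$ is group-complemented: Lemma~\ref{grpcplem} gives $\hat{\varphi}(H,G)=\varphi(H,G)$, and by Definition~\ref{euler} the latter is precisely the number of cosets $Hg$ with $\langle Hg\rangle=G$, which is positive by Proposition~\ref{chipos} (itself a consequence of Ore's theorem, Theorem~\ref{ore2}). The only point requiring genuine care is the bookkeeping of the second paragraph: matching $d=\dim\Delta(P)$ to the rank $r$ appearing in Lemma~\ref{mobeul0}, and verifying that the $(-1)^d$ coming from the Euler characteristic cancels the sign in the M\"obius invariant. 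There is no deeper obstacle, since the substantive content --- that $el$ is a dual EL-labeling precisely when $[H,G]$ is group-complemented --- has already been established in Theorem~\ref{EL}.
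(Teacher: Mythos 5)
Your proposal is correct and takes essentially the same route as the paper's proof: Cohen--Macaulayness from the dual EL-labeling of Theorem~\ref{EL} via Theorem~\ref{ELCM} and Remark~\ref{dualrem} (which the paper packages as Corollary~\ref{cp1}), then the identity $\mu(\hat{P})=\tilde{\chi}(\Delta(P))$ combined with Lemma~\ref{mobeul0} to extract $\tilde{\beta}_d(\Delta(P))=\hat{\varphi}(H,G)$, and finally Lemma~\ref{grpcplem}, Definition~\ref{euler} and Ore's theorem (Theorem~\ref{ore2}, via Proposition~\ref{chipos}) for the coset count and nonvanishing. Your second paragraph simply spells out the dimension and sign bookkeeping ($d=r-1$, cancellation of $(-1)^{r-1}$ against $-(-1)^{r}$) that the paper compresses into the line $\ell(P)=\ell(H,G)-1$.
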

\begin{proof}
Recall that $\mu(\hat{P})=\tilde{\chi}(\Delta(P))$. Therefore, by Corollary \ref{cp1}, $$\mu(\hat{P}) = (-1)^{\ell(P)}\beta_{\ell(P)}(\Delta(P)).$$ But $\ell(P)=\ell(H,G)-1$, so by Lemma \ref{mobeul0}, $ \beta_{\ell(P)}(\Delta(P)) = \hat{\varphi}(H,G).$
The last statement follows by Definition \ref{euler}, Lemma \ref{grpcplem} and Theorem \ref{ore2}. 
\end{proof}

\subsection{Examples}

Apart from the group-complemented intervals of the previous section, we will exhibit other classes of examples for which Question \ref{QCM} has a positive answer. The following result was pointed out to us by Woodroofe:

\begin{theorem}[\cite{bgs}, p14] \label{connl3}
A graded poset of length $1$ or $2$ is Cohen-Macaulay. A graded poset of length $3$ is Cohen-Macaulay if and only if its proper part is connected.
\end{theorem}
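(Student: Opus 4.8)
The plan is to unwind the definition of Cohen-Macaulayness directly: a graded poset $P$ is Cohen-Macaulay over $\mathbb{C}$ precisely when, for every open interval $(x,y)$ with $x<y$ in $P$, the reduced Betti numbers $\tilde{\beta}_i(\Delta((x,y)),\mathbb{C})$ vanish for all $i<\dim(\Delta((x,y)))$. The key structural observation is that every interval of a graded poset is again graded, of length at most $\ell(P)\le 3$, so the proper part $\overline{(x,y)}=(x,y)$ of each such interval can only take three shapes: it is empty (when $y$ covers $x$), or a nonempty antichain (when the interval has length $2$), or a bipartite comparability graph (when the interval has length $3$). First I would record the reduced homology of each shape, which is elementary, and then run the case analysis on $\ell(P)\in\{1,2,3\}$.

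For the empty proper part, $\Delta((x,y))=\{\emptyset\}$ has dimension $-1$, so the condition ``$\tilde{\beta}_i=0$ for all $i<-1$'' is vacuous and such an interval is automatically Cohen-Macaulay. For a nonempty antichain on $k\ge 1$ elements, $\Delta$ is a disjoint union of $k$ points, of dimension $0$, with $\tilde{\beta}_{-1}=0$ (the complex is nonempty) and $\tilde{\beta}_0=k-1$; the Cohen-Macaulay requirement only demands $\tilde{\beta}_{-1}=0$, which holds. Purity of $P$ guarantees that the relevant antichains are nonempty: a graded poset of length $1$ has $\overline{P}$ empty, and one of length $2$ has $\overline{P}$ a nonempty antichain, so every open interval appearing in these two cases is of one of the two harmless shapes above. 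This settles the length $1$ and length $2$ claims.

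The heart is the length $3$ case. Here $\overline{P}$ consists of atoms $a_i$ at rank $1$ and coatoms $b_j$ at rank $2$, and I would check each open interval: the intervals $(\hat{0},a_i)$, $(a_i,b_j)$, $(b_j,\hat{1})$ are empty, and $(\hat{0},b_j)$, $(a_i,\hat{1})$ have antichain proper parts (nonempty by purity), all of which are Cohen-Macaulay by the previous paragraph. The only interval whose dimension exceeds the trivial range is $(\hat{0},\hat{1})$ itself, whose order complex $\Delta(\overline{P})$ is a $1$-dimensional simplicial complex, i.e.\ the graph with vertices $\{a_i\}\cup\{b_j\}$ and an edge for each relation $a_i<b_j$. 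The Cohen-Macaulay condition here reads $\tilde{\beta}_0(\Delta(\overline{P}))=0$, and since $\tilde{\beta}_0$ of a nonempty complex vanishes exactly when the complex is connected, Cohen-Macaulayness of $P$ is equivalent to connectedness of $\overline{P}$; this gives both directions, the ``only if'' following because a disconnected graph has $\tilde{\beta}_0>0=\dim-1$.

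I expect no deep obstacle, as the argument is a bounded case analysis; the main point requiring care is the bookkeeping of degenerate simplicial complexes and dimension conventions, namely that the empty interval yields $\Delta=\{\emptyset\}$ of dimension $-1$ (vacuous condition) rather than a genuine constraint, and that for the $0$-dimensional antichain complexes the possibly-nonzero $\tilde{\beta}_0$ lies in the top degree and is therefore permitted. Getting these boundary conventions right is exactly what makes the lengths $1,2$ automatic and isolates connectedness as the sole nontrivial hypothesis in length $3$.
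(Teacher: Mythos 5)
Your proof is correct. Note that the paper itself gives no argument for this statement: it is quoted verbatim from Bj\"orner--Garsia--Stanley (\cite{bgs}, p.~14), so there is no ``paper proof'' to compare against, and your direct verification from the paper's definition of Cohen--Macaulayness is a genuinely self-contained alternative to the citation. Your case analysis is complete and the boundary conventions are handled correctly: the empty interval $\Delta=\{\emptyset\}$ of dimension $-1$ imposes a vacuous condition, the antichain complexes put their only possibly nonzero Betti number $\tilde{\beta}_0$ in top degree where it is permitted, and in length $3$ the interval $(\hat{0},\hat{1})$ is the unique one whose condition is nontrivial, reducing exactly to $\tilde{\beta}_0(\Delta(\overline{P}))=0$, i.e.\ connectedness. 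The one step you use implicitly and should flag is that every closed interval of a graded poset is itself graded (pure), which is what guarantees the well-defined rank stratification into atoms and coatoms and the nonemptiness of the intervals $(\hat{0},b_j)$ and $(a_i,\hat{1})$; this is standard and follows by concatenation (a maximal chain of $[x,y]$ extended by fixed maximal chains of $[\hat{0},x]$ and $[y,\hat{1}]$ is a maximal chain of $P$, so two maximal chains of $[x,y]$ of different lengths would violate purity of $P$). With that remark included, your argument is a complete proof of the cited result.
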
  

\begin{proposition} \label{conn}
Let $[H,G]$ be a Boolean interval of finite groups. Then the proper part of $\hat{C}(H,G)$ is connected.
\end{proposition}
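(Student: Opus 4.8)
The plan is to reduce the statement to the connectivity of the \emph{comparability graph} of $C(H,G)$, whose vertices are the proper cosets $Kg$ (with $H\le K<G$) and whose edges join comparable cosets, since the order complex of any poset is connected exactly when this graph is. I assume throughout that the rank $n$ of $[H,G]$ is at least $2$; the degenerate cases $n\le 1$ give $\hat{C}(H,G)$ of length $\le 2$, which are automatically Cohen--Macaulay by Theorem~\ref{connl3} and need no separate connectivity input.

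First I would note that every proper coset $Kg\in C(H,G)$ contains the atom coset $Hg$, so $Hg\le Kg$ and hence every vertex of the comparability graph is adjacent to an atom coset. It therefore suffices to prove that all the atom cosets $\{Hg:g\in G\}$ lie in a single connected component. The linking mechanism is the following: if $M$ is a coatom of $[H,G]$, i.e.\ $M=K_i^{\complement}$ for some atom $K_i$, then both $Hg$ and $Hg'$ are contained in the proper coset $Mg$ precisely when $g'g^{-1}\in M$, in which case $Hg \,\text{---}\, Mg \,\text{---}\, Hg'$ is a path of length two in the comparability graph. Thus two atom cosets are already joined whenever $g'g^{-1}$ lies in one coatom, and by chaining such paths they are joined whenever $g'g^{-1}$ is a product of elements drawn from the coatoms $M_1,\dots,M_n$.

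The decisive point is that $\langle \bigcup_i M_i\rangle = \bigvee_i M_i = G$. Indeed, writing $M_i=\bigvee_{j\ne i}K_j$ in the Boolean interval, the join $M_1\vee M_2$ already contains every atom $K_j$ and hence equals $G$ as soon as $n\ge 2$. Consequently any element of the form $g'g^{-1}$ factors as $m_r\cdots m_1$ with each $m_k\in M_{i_k}$; setting $g_0=g$ and $g_k=m_kg_{k-1}$ gives $g_r=g'$ and $g_kg_{k-1}^{-1}=m_k\in M_{i_k}$, so that $Hg=Hg_0 \,\text{---}\, M_{i_1}g_0 \,\text{---}\, Hg_1 \,\text{---}\,\cdots\,\text{---}\, Hg_r=Hg'$ is a path inside $C(H,G)$. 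Every intermediate vertex $M_{i_k}g_{k-1}$ is a genuine element of $C(H,G)$ because each coatom $M_{i_k}$ is a proper subgroup. This connects all atom cosets, and with the first step it shows the whole comparability graph is connected.

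I expect the argument to be essentially routine, the only things to verify carefully being the telescoping identity $g_kg_{k-1}^{-1}=m_k$ (so that consecutive atom cosets share the proper coset $M_{i_k}g_{k-1}$) and the generation fact $\bigvee_i M_i=G$, which is exactly where the hypothesis $n\ge 2$ enters. No genuine obstacle arises: the Boolean structure makes the coatoms generate $G$, and the reduction ``every coset dominates an atom coset'' handles all higher cosets uniformly.
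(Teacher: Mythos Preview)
Your argument is correct and follows the same strategy as the paper's: reduce to the atom cosets $Hg$, then chain any two of them by factoring $g'g^{-1}$ as a product of elements from proper subgroups in $[H,G]$. The only cosmetic difference is that the paper links via the atoms $K_i$ rather than the coatoms $M_i$; your explicit handling of the cases $n\le1$ is in fact more careful than the paper, since for $n=1$ the proper part is a discrete set and the statement as written needs that caveat.
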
  
\begin{proof}
Let $Hg$ and $Hg'$ be two atoms of $\hat{C}(H,G)$. Let $K_1, \dots, K_n$ be the atoms of $[H,G]$. Then by the Boolean structure, $g''=  g'g^{-1}$ is a product of elements in some $K_i$, i.e. $ g''=k_{\tau(1)} k_{\tau(2)} \dots k_{\tau(s)} $ with $\tau(i) \in \{ 1,2, \dots , n\}$ and $k_{\tau(i)} \in K_{\tau(i)}$.   

Now, $g' = g''g$, so we get that $ g'=k_{\tau(1)} k_{\tau(2)} \dots k_{\tau(s)}g $.
Let $g_i= k_{\tau(s-i)} \dots k_{\tau(s)}g$. Then $Hg_i$ and $Hg_{i+1}$ are connected via $K_{\tau(s-i-1)}g_i$. We deduce that $Hg$ and $Hg'$ are connected. But any element of the proper part of $\hat{C}(H,G)$ is connected to an atom.
\end{proof} 

\begin{lemma}\label{r2to} Let $[H,G]$ be a rank $2$ Boolean interval of finite groups. Then the dual Euler totient $\hat{\varphi}(H,G) \geq 1$.
\end{lemma}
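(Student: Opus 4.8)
The plan is to compute $\hat{\varphi}(H,G)$ directly from Definition \ref{dualeuler}, exploiting the fact that a rank $2$ Boolean interval is just the diamond $\mathcal{B}_2$. First I would record its structure: there are exactly two atoms $K_1, K_2$, which are also the two coatoms and satisfy $K_1 = K_2^{\complement}$, with $K_1 \wedge K_2 = H$ and $K_1 \vee K_2 = G$. Since $\mu(\mathcal{B}_2) = (-1)^2 = 1$, the M\"obius values on the interval are $\mu(H,H) = 1$, $\mu(H,K_1) = \mu(H,K_2) = -1$ and $\mu(H,G) = 1$. Substituting these into $\hat{\varphi}(H,G) = \sum_{K \in [H,G]} \mu(H,K)\,|G:K|$ yields
$$\hat{\varphi}(H,G) = |G:H| - |G:K_1| - |G:K_2| + 1.$$

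The second step is to bound the right-hand side from below using only the multiplicativity of the index. Because $K_1$ and $K_2$ are atoms strictly above $H$, we have $|K_1:H| \ge 2$ and $|K_2:H| \ge 2$. Applying the tower law $|G:H| = |G:K_i|\,|K_i:H|$ for $i = 1, 2$ then gives $|G:H| \ge 2\,|G:K_1|$ and $|G:H| \ge 2\,|G:K_2|$. Adding these two inequalities and dividing by $2$ produces $|G:H| \ge |G:K_1| + |G:K_2|$, whence
$$\hat{\varphi}(H,G) = |G:H| - |G:K_1| - |G:K_2| + 1 \ge 1,$$
as required.

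I expect no serious obstacle here: the argument is a short computation, and the only structural input is the elementary bound $|K_i:H| \ge 2$ coming from $H \subsetneq K_i$. In particular, this route does not use the group-complemented hypothesis at all, which is consistent with the remark in the introduction that the rank $2$ case is unconditionally well-behaved; the product-formula constraint $|G:H| \ge |K_1:H|\,|K_2:H|$ is available (via Lemma \ref{grpcp}-style counting) but turns out to be unnecessary for this inequality. The one point I would verify carefully is that the two atoms are genuinely distinct and exhaust the proper part of the interval, so that the M\"obius sum has exactly the four listed terms; this is immediate from the definition of $\mathcal{B}_2$.
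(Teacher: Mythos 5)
Your proof is correct and is essentially identical to the paper's: both compute $\hat{\varphi}(H,G) = |G:H| - |G:K_1| - |G:K_2| + 1$ from the M\"obius values on $\mathcal{B}_2$ and then apply the tower law $|G:H| = |G:K_i|\,|K_i:H| \geq 2\,|G:K_i|$ to conclude. No further comment is needed.
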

\begin{proof}
Let $K_1$, $K_2$ be the atoms of $[H,G]$. Then $$\hat{\varphi}(H,G) = |G:H|-|G:K_1|-|G:K_2|+|G:G|.$$ But $|G:H| = |G:K_i| \cdot |K_i:H| \geq 2|G:K_i| $. Thus $\hat{\varphi}(H,G) \geq |G:G| = 1$.
\end{proof}

\begin{corollary} \label{r2cm} The graded coset poset of a rank $2$ Boolean interval $[H,G]$ is Cohen-Macaulay and the nontrivial reduced Betti number is nonzero.
\end{corollary}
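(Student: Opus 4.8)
The plan is to recognize $\hat{C}(H,G)$ as a graded poset of length exactly $3$ and then invoke the structural results already assembled above. First I would record the shape of the poset: since $[H,G]$ is Boolean of rank $2$, the half-open interval $[H,G)$ consists of $H$ together with its two atoms $K_1,K_2$, so the proper cosets split into one level of $H$-cosets $Hg$ and one level of cosets $K_ig$. Adjoining $\emptyset$ and $G$ gives the four ranks witnessed by the maximal chains $\emptyset \lessdot Hg \lessdot K_ig \lessdot G$, so $\hat{C}(H,G)$ is graded with $\ell(\hat{C}(H,G)) = 3$.

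With the length pinned down, the Cohen-Macaulay claim reduces to a connectivity statement by Theorem~\ref{connl3}: a graded poset of length $3$ is Cohen-Macaulay if and only if its proper part is connected. But Proposition~\ref{conn} asserts precisely that the proper part of $\hat{C}(H,G)$ is connected for any Boolean interval, so $\hat{C}(H,G)$ is Cohen-Macaulay.

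For the Betti number, I would use that Cohen-Macaulayness forces the reduced homology of $\Delta(P)$, with $P = C(H,G)$, to be concentrated in the top degree $d = \ell(P) = 1$. Hence by the Euler-Poincar\'e formula together with $\mu(\hat{P}) = \tilde{\chi}(\Delta(P))$, only $\tilde{\beta}_1$ survives and $\tilde{\beta}_1(\Delta(P)) = (-1)^{1}\mu(\hat{C}(H,G))$. Substituting Lemma~\ref{mobeul0} in rank $n=2$, namely $\mu(\hat{C}(H,G)) = -\hat{\varphi}(H,G)$, yields $\tilde{\beta}_1(\Delta(P)) = \hat{\varphi}(H,G)$, and Lemma~\ref{r2to} gives $\hat{\varphi}(H,G) \ge 1$, so the nontrivial reduced Betti number is nonzero.

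The argument is essentially an assembly of results already in hand, so there is no deep obstacle; the only points demanding care are confirming that the length is exactly $3$ (so that the genuine length-$3$ case of Theorem~\ref{connl3} is invoked, rather than the automatic length $\le 2$ case) and correctly tracking the signs in the passage from the M\"obius invariant to $\tilde{\beta}_1$ through Lemma~\ref{mobeul0}.
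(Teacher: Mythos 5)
Your proof is correct and follows essentially the same route as the paper: the paper's proof is a one-line assembly of Proposition~\ref{conn}, Lemma~\ref{r2to}, and Theorems~\ref{connl3} and~\ref{ELCM}, exactly the pieces you combine (length $3$, connectivity of the proper part, and $\hat{\varphi}\geq 1$). If anything, your handling of the Betti-number step is slightly more careful than the paper's citation of Theorem~\ref{ELCM}: you derive $\tilde{\beta}_1(\Delta(P)) = \hat{\varphi}(H,G)$ directly from the Cohen-Macaulay definition, the Euler--Poincar\'e formula and Lemma~\ref{mobeul0}, which avoids appealing to a theorem whose hypothesis (an EL-labeling) is precisely what is unavailable in the non-group-complemented rank~$2$ cases this corollary is meant to cover.
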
 
\begin{proof} 
By Proposition \ref{conn}, Lemma \ref{r2to}, Theorems \ref{connl3} and \ref{ELCM}.
\end{proof}

\begin{remark} \label{rkcp}
If $|G:H|<32$, then there are, up to equivalence, $612$ Boolean intervals (i.e. $1 + 241 + 337 + 33$, according to the rank $0,1,2,3$), by GAP. They are all group-complemented except $[D_8,A_2(2)]$ and $[S_3,A_2(2)]$, both of rank $2$.
\end{remark}

\begin{corollary} The graded coset poset of a Boolean interval $[H,G]$ of index  $<32$ is Cohen-Macaulay and the nontrivial reduced Betti number is nonzero.
\end{corollary}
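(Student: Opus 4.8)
The plan is to reduce the statement to cases that have already been settled, using the exhaustive GAP classification recorded in Remark \ref{rkcp}. That remark asserts that, up to equivalence, there are exactly $612$ Boolean intervals of index $<32$, and that \emph{all} of them are group-complemented with precisely two exceptions, namely $[D_8,A_2(2)]$ and $[S_3,A_2(2)]$, both of rank $2$. Accordingly I would split the argument into the group-complemented case and these two residual intervals, and nothing else is needed.

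First, suppose $[H,G]$ is group-complemented. Then Corollary \ref{cp1} gives directly that $\hat{C}(H,G)$ is Cohen-Macaulay, and Theorem \ref{cp2} identifies the nontrivial reduced Betti number of $\Delta(C(H,G))$ with the dual Euler totient $\hat{\varphi}(H,G)$, which by the same theorem (through Lemma \ref{grpcplem} and Theorem \ref{ore2}) equals the number of cosets $Hg$ with $\langle Hg\rangle = G$, hence is positive. This single step disposes of every Boolean interval of index $<32$ except the two named exceptions; in particular it covers all intervals of rank $0$, $1$, and $3$, and all but two of the rank $2$ intervals.

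It then remains only to treat $[D_8,A_2(2)]$ and $[S_3,A_2(2)]$. Since both are Boolean of rank $2$, Corollary \ref{r2cm} applies to each verbatim: the graded coset poset of any rank $2$ Boolean interval is Cohen-Macaulay with nonzero nontrivial reduced Betti number. Recall that this rank $2$ conclusion does not use the group-complemented hypothesis at all — it rests on Proposition \ref{conn} (connectivity of the proper part), Theorem \ref{connl3} (connectivity suffices for Cohen-Macaulayness in length $3$), and Lemma \ref{r2to} (which forces $\hat{\varphi}(H,G)\geq 1$ in rank $2$). Combining the group-complemented case with these two exceptional intervals establishes the corollary for all $612$ intervals simultaneously.

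There is essentially no genuine obstacle here: the corollary is a bookkeeping consequence of the classification, the substantive work having been done in Corollaries \ref{cp1} and \ref{r2cm} and Theorem \ref{cp2}. The only point requiring care is to notice that the group-complemented machinery alone does \emph{not} reach the two exceptions, which is exactly why the independent rank $2$ argument of Corollary \ref{r2cm} must be invoked to close the gap.
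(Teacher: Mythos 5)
Your proposal is correct and follows exactly the paper's own argument: the paper proves this corollary by citing Remark \ref{rkcp} (the GAP classification into group-complemented intervals plus the two rank $2$ exceptions), Corollary \ref{cp1} and Theorem \ref{cp2} for the group-complemented case, and Corollary \ref{r2cm} for $[D_8,A_2(2)]$ and $[S_3,A_2(2)]$. Your write-up simply makes explicit the case split that the paper leaves implicit in its one-line proof.
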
  
\begin{proof}
By Remark \ref{rkcp}, Corollary \ref{cp1}, Theorem \ref{cp2} and Corollary \ref{r2cm}.
\end{proof}
In the following example, for the notions of BN-pair, Coxeter system, Borel subgroup, spherical building, simple groups of Lie type, Chevalley groups and Dynkin diagram, we refer to the books \cites{bou,bro,gar,car}.

\begin{example} \label{exBN}  Let $G$ be a finite group with a BN-pair, $H$ be the corresponding Borel subgroup and $(W,S)$ be the associated Coxeter system. Let $n:=|S|$ be the rank of the BN-pair. Then the interval $[H,G]$ is Boolean of rank $n$ \cite[Theorem 8.3.4]{car}. The order complex of $C(H,G)$ is equivalent to the spherical building associated to the BN-pair \cite[Section 5.7]{gar} as abstract simplicial complexes. It is Cohen-Macaulay (\cite{bjo},\cite[Remark 3 p94]{bro}) and has the homotopy type of a wedge of $r( \ge 1)$ spheres $\mathbb{S}^{n-1}$ \cite[Theorem 2 p93]{bro}. It follows that the dual Euler totient $\hat{\varphi}(H,G) = r \neq 0$, so by Theorem \ref{main}, the Boolean interval $[H,G]$ is linearly primitive. Any finite simple group $G$ of Lie type (over a finite field of characteristic $p$) admits a BN-pair (except Tits group) and $r$ is the $p$-contribution in the order of $G$ \cite[Section 4, (ii')]{tits}. If moreover, $G$ is a Chevalley group, then $n$ is the number of vertices in its Dynkin diagram.  
\end{example}

The rank $2$ Boolean intervals $[D_8,A_2(2)]$ and $[S_3,A_2(2)]$ of Remark \ref{rkcp}, have dual Euler totients  $\hat{\varphi} = 2^3$ and $15$, respectively. The first comes from a BN-pair, but not the second.

\begin{remark} With the help of Hulpke and GAP, four Boolean intervals $[H,G]$ of rank $3$, with $G$ simple and $|G| \le 4 \cdot 10^6$ (given by $G={\rm A}_3(2)$, ${\rm C}_3(2)$, $^2{\rm A}_2(5^2)$  and $^2{\rm A}_3(3^2)$; all of Lie type) are found. None of them is group-complemented. Their corresponding dual Euler totients $\hat{\varphi}$ are $2^6$, $2^9$, $3899$ and $3968$, respectively. The first two come from BN-pairs, but not the two last (because $3899$ and $3968$ are not prime-powers). Using SageMath \cite{sage}, one can check that the coset poset of the third is also Cohen-Macaulay (we don't know about the last one).
\end{remark}

\section{Acknowledgments} 
A special thanks to Russ Woodroofe for fruitful exchanges. Thanks to the anonymous referees for useful suggestions. Thanks to Viakalathur Shankar Sunder, Vijay Kodiyalam and Amritanshu Prasad for their interest to our works. Thanks also to Darij Grinberg, Todd Trimble, Brian M. Scott and Alexander Hulpke. Mamta Balodi thanks Sebastien Palcoux for introducing her to the subject of planar algebras and Ore's theorem for cyclic subfactor planar algebras. This work was supported by the Institute of Mathematical Sciences, Chennai. Mamta Balodi is now supported by Department of Science and Technology, Govt of India, New Delhi, under the grant PDF/2017/000229.

\begin{bibdiv}
\begin{biblist}

\bib{bgs}{article}{
   author={Bj{\"o}rner, Anders},
   author={Garsia, Adriano M.},
   author={Stanley, Richard P.},
   title={An introduction to Cohen-Macaulay partially ordered sets},
   conference={
      title={Ordered sets},
      address={Banff, Alta.},
      date={1981},
   },
   book={
      series={NATO Adv. Study Inst. Ser. C: Math. Phys. Sci.},
      volume={83},
      publisher={Reidel, Dordrecht-Boston, Mass.},
   },
   date={1982},
   pages={583--615},
   %review={\MR{661307}},
}
\bib{bjo}{article}{
   author={Bj{\"o}rner, Anders},
   title={Some combinatorial and algebraic properties of Coxeter complexes
   and Tits buildings},
   journal={Adv. in Math.},
   volume={52},
   date={1984},
   number={3},
   pages={173--212},
   %issn={0001-8708},
   %review={\MR{744856}},
   %doi={10.1016/0001-8708(84)90021-5},
}
%\bib{bjo2}{article}{
%   author={Bj{\"o}rner, Anders},
%   title={Topological methods.},
%   booktitle = {Handbook of combinatorics. Vol. 1-2},
%   pages = {1819--1872}
%   date={1994},
%   note = {Elsevier, Amsterdam; MIT Press, Cambridge}
%}
\bib{bou}{book}{
   author={Bourbaki, Nicolas},
   title={Lie groups and Lie algebras. Chapters 4--6},
   series={Elements of Mathematics (Berlin)},
   %note={Translated from the 1968 French original by Andrew Pressley},
   publisher={Springer-Verlag, Berlin},
   date={2002},
   pages={xii+300},
   %isbn={3-540-42650-7},
   %review={\MR{1890629}},
   %doi={10.1007/978-3-540-89394-3},
}
\bib{bro}{book}{
   author={Brown, Kenneth S.},
   title={Buildings},
   publisher={Springer-Verlag, New York},
   date={1989},
   pages={viii+215},
   %isbn={0-387-96876-8},
   %review={\MR{969123}},
   %doi={10.1007/978-1-4612-1019-1},
}
\bib{br}{article}{
   author={Brown, Kenneth S.},
   title={The coset poset and probabilistic zeta function of a finite group},
   journal={J. Algebra},
   volume={225},
   date={2000},
   number={2},
   pages={989--1012},
   issn={0021-8693},
   %review={\MR{1741574}},
   %doi={10.1006/jabr.1999.8221},
}
\bib{car}{book}{
   author={Carter, Roger W.},
   title={Simple groups of Lie type},
   note={Pure and Applied Mathematics, Vol. 28},
   publisher={John Wiley \& Sons, London-New York-Sydney},
   date={1972},
   pages={viii+331},
   %review={\MR{0407163}},
}
\bib{gar}{book}{
   author={Garrett, Paul},
   title={Buildings and classical groups},
   publisher={Chapman \& Hall, London},
   date={1997},
   pages={xii+373},
   %isbn={0-412-06331-X},
   %review={\MR{1449872}},
   %doi={10.1007/978-94-011-5340-9},
}
\bib{gap}{misc}{
   author={The GAP~Group},
   title={GAP -- Groups, Algorithms, and Programming, Version 4.8.3},
   date={2016}, 
   note={\tt gap-system.org},    
}
\bib{hal}{article}{
    author = {Philip, Hall},
    title = {The Eulerian functions of a group.},
    journal = {Q. J. Math., Oxf. Ser.},
    volume = {7},
    pages = {134--151},
    year = {1936},
    %Publisher = {Oxford University Press, Oxford},
    %doi = {10.1093/qmath/os-7.1.134},
}
\bib{isa}{book}{
   author={Isaacs, I. Martin},
   title={Character theory of finite groups},
   %note={Corrected reprint of the 1976 original [Academic Press, New York; MR0460423 (57 \#417)]},
   publisher={Dover Publications, Inc., New York},
   date={1994},
   pages={xii+303},
   %isbn={0-486-68014-2},
   %review={\MR{1280461}},
}
\bib{or}{article}{
   author={Ore, \O ystein},
   title={Structures and group theory. II},
   journal={Duke Math. J.},
   volume={4},
   date={1938},
   number={2},
   pages={247--269},
   %issn={0012-7094},
   %review={\MR{1546048}},
   %doi={10.1215/S0012-7094-38-00419-3},
}
\bib{pa}{article}{
   author={Palcoux, Sebastien},
   title={Ore's theorem for cyclic subfactor planar algebras and beyond},
   journal={Pacific J. Math.},
   volume={292},
   number={1},
   date={2018},
   pages={203--221},
   %doi={10.2140/pjm.2018.292.203}, 
   %eprint={https://msp.org/pjm/2018/292-1/p09.xhtml},
}
\bib{qui}{article}{
   author={Quillen, Daniel},
   title={Homotopy properties of the poset of non-trivial p-subgroups of a group},
   journal={Adv. Math.},
   volume={28},
   date={1978},
   pages={101--128}
}
\bib{sage}{manual}{
      author={Developers, The~Sage},
       title={{S}agemath, the {S}age {M}athematics {S}oftware {S}ystem
  ({V}ersion 8.0)},
        date={2017},
        note={{\tt sagemath.org}},
}
\bib{ser}{book}{
    Author = {Jean-Pierre {Serre}},
    Title = {{Linear representations of finite groups.}},
    Year = {1977},
    %Language = {English},
    Publisher = {{Graduate Texts in Mathematics. 42. New York - Heidelberg - Berlin: Springer-Verlag. X, 170 p.}},
    %MSC2010 = {20Cxx 20Dxx 11S20 16D40 16E20},
    %Zbl = {0355.20006}
}
\bib{shawo}{article}{
   author={Shareshian, John},
   author={Woodroofe, Russ},
   title={Order complexes of coset posets of finite groups are not
   contractible},
   journal={Adv. Math.},
   volume={291},
   date={2016},
   pages={758--773},
   %issn={0001-8708},
   %review={\MR{3459029}},
   %doi={10.1016/j.aim.2015.10.018},
}
\bib{sta}{book}{
   author={Stanley, Richard P.},
   title={Enumerative combinatorics. Volume 1},
   series={Cambridge Studies in Advanced Mathematics},
   volume={49},
   edition={2},
   publisher={Cambridge University Press, Cambridge},
   date={2012},
   pages={xiv+626},
   isbn={978-1-107-60262-5},
   %review={\MR{2868112}},
}
\bib{tits}{article}{
   author={Tits, Jacques},
   title={On buildings and their applications},
   conference={
      title={Proceedings of the International Congress of Mathematicians},
      %address={Vancouver, B. C.},
      date={1974},
   },
   book={
      publisher={Canad. Math. Congress, Montreal, Que.},
   },
   date={1975},
   pages={209--220},
   %review={\MR{0439945}},
}
\bib{wa}{article}{
   author={Wachs, Michelle L.},
   title={Poset topology: tools and applications},
   conference={
      title={Geometric combinatorics},
   },
   book={
      series={IAS/Park City Math. Ser.},
      volume={13},
      publisher={Amer. Math. Soc., Providence, RI},
   },
   date={2007},
   pages={497--615},
   %review={\MR{2383132}},
}
\bib{russ3}{article}{
   author={Woodroofe, Russ},
   title={Shelling the coset poset},
   journal={J. Combin. Theory Ser. A},
   volume={114}
   date={2007},
   number={4},
   pages={733--746},
}
\bib{russ2}{article}{
   author={Woodroofe, Russ},
   title={Cubical convex ear decompositions},
   journal={Electron. J. Combin.},
   volume={16},
   date={2009},
   number={2, Special volume in honor of Anders Bjorner},
   pages={research paper r17, 33},
   %issn={1077-8926},
   %review={\MR{2515780}},
}
\bib{zas}{book}{
   author={Zassenhaus, Hans},
   title={The Theory of Groups},
   %note={Translated from the German by Saul Kravetz},
   publisher={Chelsea Publishing Company, New York, N. Y.},
   date={1949},
   pages={viii+159},
   %review={\MR{0030947}},
}
\end{biblist}
\end{bibdiv}

\end{document}